\begin{document}
\baselineskip=15pt

\numberwithin{equation}{section}

\newtheorem{thm}{Theorem}[section]
\newtheorem{lem}[thm]{Lemma}
\newtheorem{cor}[thm]{Corollary}
\newtheorem{Prop}[thm]{Proposition}
\newtheorem{Def}[thm]{Definition}
\newtheorem{Rem}[thm]{Remark}
\newtheorem{Ex}[thm]{Example}
\newtheorem{Ass}[thm]{Assumption}

\newcommand{\A}{\mathbb{A}}
\newcommand{\B}{\mathbb{B}}
\newcommand{\C}{\mathbb{C}}
\newcommand{\D}{\mathbb{D}}
\newcommand{\E}{\mathbb{E}}
\newcommand{\F}{\mathbb{F}}
\newcommand{\G}{\mathbb{G}}
\newcommand{\I}{\mathbb{I}}
\newcommand{\J}{\mathbb{J}}
\newcommand{\K}{\mathbb{K}}
\newcommand{\M}{\mathbb{M}}
\newcommand{\N}{\mathbb{N}}
\newcommand{\Q}{\mathbb{Q}}
\newcommand{\R}{\mathbb{R}}
\newcommand{\T}{\mathbb{T}}
\newcommand{\U}{\mathbb{U}}
\newcommand{\V}{\mathbb{V}}
\newcommand{\W}{\mathbb{W}}
\newcommand{\X}{\mathbb{X}}
\newcommand{\Y}{\mathbb{Y}}
\newcommand{\Z}{\mathbb{Z}}
\newcommand\ca{\mathcal{A}}
\newcommand\cb{\mathcal{B}}
\newcommand\cc{\mathcal{C}}
\newcommand\cd{\mathcal{D}}
\newcommand\ce{\mathcal{E}}
\newcommand\cf{\mathcal{F}}
\newcommand\cg{\mathcal{G}}
\newcommand\ch{\mathcal{H}}
\newcommand\ci{\mathcal{I}}
\newcommand\cj{\mathcal{J}}
\newcommand\ck{\mathcal{K}}
\newcommand\cl{\mathcal{L}}
\newcommand\cm{\mathcal{M}}
\newcommand\cn{\mathcal{N}}
\newcommand\co{\mathcal{O}}
\newcommand\cp{\mathcal{P}}
\newcommand\cq{\mathcal{Q}}
\newcommand\rr{\mathcal{R}}
\newcommand\cs{\mathcal{S}}
\newcommand\ct{\mathcal{T}}
\newcommand\cu{\mathcal{U}}
\newcommand\cv{\mathcal{V}}
\newcommand\cw{\mathcal{W}}
\newcommand\cx{\mathcal{X}}
\newcommand\ocd{\overline{\cd}}

\def\c{\centerline}
\def\ov{\overline}
\def\emp {\emptyset}
\def\pa {\partial}
\def\bl{\setminus}
\def\op{\oplus}
\def\sbt{\subset}
\def\un{\underline}
\def\al {\alpha}
\def\bt {\beta}
\def\de {\delta}
\def\Ga {\Gamma}
\def\ga {\gamma}
\def\lm {\lambda}
\def\Lam {\Lambda}
\def\om {\omega}
\def\Om {\Omega}
\def\sa {\sigma}
\def\vr {\varepsilon}
\def\va {\varphi}

\title{\bf Stability and instability of standing waves for a generalized  Choquard equation with potential}

\author{ Zhipeng Cheng,\ \ Minbo Yang\thanks{Minbo Yang is the corresponding author, this work is partially supported by NSFC (11571317, 11101374, 11271331) and ZJNSF(LY15A010010), mbyang@zjnu.edu.cn}\
\\
\\
{\small Department of Mathematics, Zhejiang Normal University} \\ {\small  Jinhua, Zhejiang, 321004, P. R. China}}

\date{}
\maketitle

\begin{abstract}
We are going to study the standing waves for a generalized  Choquard equation with potential:
$$
-i\partial_t u-\Delta u+V(x)u=(|x|^{-\mu}\ast|u|^p)|u|^{p-2}u, \ \ \hbox{in}\ \  \mathbb{R}\times\mathbb{R}^3,
$$
where $V(x)$ is a real function, $0<\mu<3$,  $2-\mu/3<p<6-\mu$ and $\ast$ stands for convolution.  Under suitable assumptions on the potential and appropriate frequency $\omega$ ,  the stability and instability of the standing waves $u=e^{i \omega t}\varphi(x)$ are investigated .
 \vspace{0.3cm}

\vspace{0.3cm}

 \noindent {\bf Keywords:}  Stability; Instability;  standing  wave;  ground state solution; generalized  Choquard equation.
\end{abstract}

\section{Introduction and main results}

In this paper, we are going to study the following nonlocal Schr\"odinger equation :

\begin{equation}\label{ME}
\left\{\begin{array}{l}
\displaystyle -i\partial_t u-\Delta u+V(x)u=(|x|^{-\mu}\ast|u|^p)|u|^{p-2}u, \ \ \hbox{in}\ \  \mathbb{R}\times\mathbb{R}^3,\\
\displaystyle u(0,x) = u_0(x),
\end{array}
\right.
\end{equation}
where $V(x)$ is a real valued function, $0<\mu<3$ and $2-\mu/3<p<6-\mu$.
This equation arises in physics as an effective description of a non-relativistic bosonic system with two-body interactions in its mean field limit, it is also known to describe the propagation of electromagnetic
waves in plasmas \cite{BC} and plays an important
role in the theory of Bose-Einstein condensation \cite{D}. This equation, which is also called the Hartree equations or the Schr\"odinger-Newton equations, has attracted a great deal of attention in theoretical over the past years.

As we all know, the Cauchy problem of nonlinear generalized  Choquard equation has been intensively studied since the pioneering work by Chadam
and Glassey in \cite{ChG}.  We refer the readers to \cite{C, FTY, GV} for a complete overview of the literature on the topic of the Cauchy problem and asymptotic behavior of the solutions. In this paper we are interested in the standing wave type solutions, i.e.  solutions of the form
\begin{equation}\label{SWS}
u(x)=e^{i \omega t}\varphi(x),
\end{equation}
where $\omega > 0$,$\varphi\in H^1(\mathbb{R}^3)\setminus\{0\}$ satisfies the following nonlocal elliptic equation:
\begin{equation}\label{SE}
-\Delta \varphi+\omega\varphi+V(x)\varphi-(|x|^{-\mu}\ast |\varphi|^p)|\varphi|^{p-2}\varphi=0.
\end{equation}
For the nonlocal Schr\"odinger equations with $V(x)\equiv0$,   Chen and Guo  in \cite{CG} studied
\[
\left\{\begin{array}{l}
\displaystyle i\varphi_t + \Delta\varphi+\big(\frac{1}{|x|^\alpha}\ast|\varphi|^p\big)|\varphi|^{p-2}\varphi = 0,\\
\displaystyle  \varphi(x,0) = \varphi_0(x),\hspace{5.0mm}x\in\mathbb{R}^3.
\end{array}
\right.
\]
and proved the instability of the standing wave solution. For the Cauchy
problem of the Hartree equation with harmonic potential, that is  $V(x)=|x|^2$, we
refer the readers to book \cite{C} and the references therein. In  \cite {CGH},  the authors  derived a variant of Gagliardo-
Nirenberg interpolation inequality involving nonlocal nonlinearity and determined its best (smallest)
constant.  The authors also established a sharp criterion for the global existence
and blow-up of solutions of the  Hartree equation with harmonic potential.
In \cite{Wa} the author obtained the blow-up  and  strong instability result via construction of a cross-constrained invariant set. While in \cite{CWZ}, the authors studied the ground states  of
 $$
-\Delta u+\omega u=(|x|^{-1}\ast|u|^2)u,\ \ \hbox{in}\ \ \mathbb{R}^3.
$$
and considered the stability of the standing waves for a class of Hartree equation. In \cite{CMS}, the classical limit of \eqref{ME} with harmonic potential and $p=2$ was studied by Carles, Mauser and Stimming.  We would like to mention that a recent paper \cite{dS} where  the authors investigated the soliton dynamics for the Hartree equation by proving stability estimates in the spirit of  Weinstein for local equations. subsequently, without using the uniqueness and nondegeneracy of the ground states of the generalized Choquard equation, the authors in \cite{BdGS}  also  studied the soliton dynamics behavior.
For the stability and instability of standing wave of nonlinear local Schr\"odinger equation, we may refer the readers to  \cite{BF, CL,F1,F2,CAS,S,GSS1,GSS2}.

The aim of this paper is to consider the instability and stability of standing waves for a class of  generalized  Choquard equation with potentials, including  harmonic cases. Suppose that $V(x)$ is a radial function and satisfies the the following conditions:
\begin{itemize}
\item[$(V0).$]  There exist two radial functions $V_1(x)$ and $ V_2(x)$ such that $V(x)=V_1(x) + V_2(x)$.
\item[$(V1.1).$]  $V_1(x)\in C^2(\mathbb{R}^3)$ and  there exist positive constants $ m$, $M$ such  that  $ 0 \leq V_1(x)$ and $\leq M(1+|x|^m)$ on $\mathbb{R}^3$.
\item[$(V1.2).$] There  exists $M_\alpha > 0$ such  that $|x^\alpha\partial_{x}^\alpha V_1(x)|\leq M_\alpha(1 + V_1(x))$ on $\mathbb{R}^3$ for $|\alpha|\leq 2$.
\item[$(V1.3).$] $V_1(x)\in C^\infty(\mathbb{R}^3)$,$V_1(x)$ is positive in $\mathbb{R}^3$ and $\partial_{x}^\alpha V_1(x)\in L^\infty(\mathbb{R}^3)$ for $|\alpha|\geq 2$.
\item[$(V2).$]  There exists $ q \geq 1$ such that $q>3/2$  and $ x^\alpha\partial_{x}^\alpha V_2(x) \in L^q(\mathbb{R}^3)+L^\infty(\mathbb{R}^3)$ for $ |\alpha| \leq 2$.
\end{itemize}
Define the Hilbert space $X$ by
\hspace{3.1cm}$$X \triangleq \{v \in H^1(\mathbb{R}^3,\mathbb{C});V_1(x)|v(x)|^2 \in L^1(\mathbb{R}^3)\}$$
with the inner product
\hspace{2.0cm}$$(v,w)_X \triangleq Re \int_{\mathbb{R}^3} (v(x)\overline{w(x)} + \nabla v(x) \cdot \overline{\nabla w(x)}
+ V_1(x)v(x)\overline{w(x)})dx$$
and the  corresponding norm of $X$ denoted by $\|\cdot\|_X$.

\begin{Prop} (Hardy-Littlewood-Sobolev inequality)\label{HSI1}
Let $0<\mu<n$ and suppose that $f\in L^q(\mathbb{R}^n)$,$h\in L^r(\mathbb{R}^n)$ with $\frac{1}{q}+\frac{1}{r}+\frac{\mu}{n}=2$ and $1<q,r<\infty$, then
$$\int_{\mathbb{R}^n\times\mathbb{R}^n}\frac{|f(x)||h(x)|}{|x-y|^\mu}dxdy\leq C(q,r,\mu,n)\|f\|_{L^q}\|h\|_{L^r},\ \ x,y\in\mathbb{R}^n,$$
where $C(q, r, \mu, n)$ is a positive constant depending on $q$, $r$, $\mu$ and $n$.
\end{Prop}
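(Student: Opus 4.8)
Since this is the classical Hardy--Littlewood--Sobolev inequality, the plan is to recast the bilinear estimate as a mapping property of the Riesz potential and then establish that property by a maximal--function argument. By Tonelli's theorem the double integral equals $\int_{\mathbb{R}^n}|h(x)|\,(I_\mu |f|)(x)\,dx$, where $I_\mu g(x)=\int_{\mathbb{R}^n}|x-y|^{-\mu}g(y)\,dy$ is the Riesz potential (the second factor being evaluated at $y$, as the inequality requires). Hölder's inequality then reduces the claim to the operator bound $\|I_\mu f\|_{L^{r'}}\le C\|f\|_{L^q}$, with $r'$ the conjugate exponent of $r$. The balance condition $\tfrac1q+\tfrac1r+\tfrac\mu n=2$ is exactly $\tfrac1{r'}=\tfrac1q+\tfrac\mu n-1$; setting $s=r'$, one checks from $1<r<\infty$ and $0<\mu<n$ that $1<q<s<\infty$, so all exponents lie in the open range. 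By the scaling $f\mapsto f(\lambda\cdot)$, which is homogeneous of the correct degree on both sides, I may normalize $\|f\|_{L^q}=1$.

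The core is a pointwise bound on $I_\mu f$ obtained by splitting the kernel at a free radius $R>0$: $I_\mu f(x)=\int_{|x-y|\le R}+\int_{|x-y|>R}=:I_1(x)+I_2(x)$. For the near part I would decompose the ball $\{|x-y|\le R\}$ into dyadic annuli and use that the average of $|f|$ over each concentric ball is dominated by the Hardy--Littlewood maximal function $Mf(x)=\sup_{\rho>0}|B_\rho(x)|^{-1}\int_{B_\rho(x)}|f|$; summing the resulting geometric series (which converges precisely because $\mu<n$) yields $|I_1(x)|\le C\,R^{\,n-\mu}Mf(x)$. For the far part, Hölder's inequality with exponents $q,q'$ gives $|I_2(x)|\le C\,R^{\,n/q'-\mu}\|f\|_{L^q}$, where the tail integral $\big(\int_{|z|>R}|z|^{-\mu q'}\,dz\big)^{1/q'}$ is finite because $\mu q'>n$, a condition equivalent to $\tfrac1s>0$ and hence guaranteed by the hypotheses.

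Optimizing the sum $C\,R^{\,n-\mu}Mf(x)+C\,R^{\,n/q'-\mu}$ over $R$ --- choosing $R\sim (Mf(x))^{-q/n}$ so as to balance the two terms --- produces the pointwise inequality $|I_\mu f(x)|\le C\,(Mf(x))^{q/s}$. Raising this to the power $s$, integrating over $\mathbb{R}^n$, and invoking the Hardy--Littlewood maximal theorem (this is the step where the strict inequality $q>1$ is essential) gives $\|I_\mu f\|_{L^s}^s\le C\int_{\mathbb{R}^n}(Mf)^q\le C\|f\|_{L^q}^q$. Combined with the normalization $\|f\|_{L^q}=1$ and the reduction of the first paragraph, this yields the asserted bound with $C=C(q,r,\mu,n)$.

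The step I expect to require the most care is confirming that every exponent stays strictly inside its admissible range, since the argument collapses at the endpoints: the tail estimate needs $\mu q'>n$, the maximal theorem needs $q>1$, and the target exponent must satisfy $1<s<\infty$. All three follow from $1<q,r<\infty$ together with $0<\mu<n$, so the genuine work here is the bookkeeping of these constraints rather than any deep analytic difficulty; the one substantive input --- the $L^q$-boundedness of the maximal operator for $q>1$ --- is itself a standard theorem that I would simply cite.
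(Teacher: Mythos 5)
Your argument is correct, but there is nothing in the paper to compare it against: the paper states Proposition 1.1 as a classical fact (the Hardy--Littlewood--Sobolev inequality) and offers no proof, using it only to check that the energy functional is well defined. What you give is the standard Hedberg-type proof of the non-sharp HLS bound: duality reduces the bilinear estimate to the Riesz-potential bound $\|I_\mu f\|_{L^{s}}\le C\|f\|_{L^q}$ with $\tfrac1s=\tfrac1q+\tfrac\mu n-1$, and the pointwise inequality $|I_\mu f(x)|\le C(Mf(x))^{q/s}\|f\|_{L^q}^{1-q/s}$ obtained by splitting the kernel at an optimized radius does the rest via the maximal theorem. Your exponent bookkeeping is right: the condition $\mu q'>n$ for the tail, the positivity of $\tfrac1s$, and $q>1$ for the maximal theorem all follow from $1<q,r<\infty$ and $\tfrac1q+\tfrac1r+\tfrac\mu n=2$; you also correctly read the statement's $|h(x)|$ as the evident typo for $|h(y)|$. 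Two cosmetic remarks: normalizing $\|f\|_{L^q}=1$ is justified by the linear homogeneity of both sides in $f$ (dividing by the norm), not by the dilation $f\mapsto f(\lambda\cdot)$, which instead confirms that the exponent relation is the only one for which such a bound can hold; and the balancing step implicitly assumes $0<Mf(x)<\infty$, with the degenerate cases being trivial. This approach yields a finite constant $C(q,r,\mu,n)$ but not Lieb's sharp constant; that is all the paper needs.
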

We  define the energy functional $E$ on $X$ by
$$E(v)\triangleq \frac{1}{2}\|\nabla v\|_{2}^2+\frac{1}{2}\int_{\mathbb{R}^3}V(x)|v(x)|^2dx-\frac{1}{2p}
\mathfrak{F}_\mu(v)$$
with $ \displaystyle\mathfrak{F}_\mu(v)=\int_{\mathbb{R}^3\times\mathbb{R}^3}\frac{|v(x)|^p|v(y)|^p}{|x-y|^\mu}dxdy$.
By assumptions $(V0)-(V2)$ and $2-\mu/3 < p <6-\mu$, applying Proposition \ref{HSI1}, we know that $E(v)$ is well defined on $X$.

In the following we will make the following assumption.
\begin{Prop}\label{CAUP}
\ Let $0<\mu<3$. For any $u_0 \in X$,there exist $T=T(\|u_0\|_X)>0$ and a unique solution $u(t)\in C([0,T],X)$ of \eqref{ME} with $u(0)=u_0$ satisfying $$E(u(t))=E(u_0),\hspace{8mm} \|u(t)\|_{2}^2=\|u_0\|_{2}^2,\hspace{8mm}t\in [0,T].$$
In addition, if $u_0\in X$ satisfies $|x|u_0\in L^2(\mathbb{R}^3)$,then the viral identity
$$\frac{d^2}{dt^2}\|xu(t)\|_{2}^2=8P(u(t)),$$
holds for $t\in[0,T]$, where
\begin{equation}\label{P}
P(v)=\|\nabla v\|_{2}^2-\frac{1}{2}\int_{\mathbb{R}^3}x\cdot\nabla V(x)|v(x)|^2dx-\frac{3(p-2)+\mu}{2p}\mathfrak{F}_\mu(v).
\end{equation}
\end{Prop}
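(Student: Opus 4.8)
The plan is to split the statement into the Cauchy theory (existence, uniqueness, and the two conservation laws) and the virial identity, and to treat the two halves with rather different tools.

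For the local well-posedness I would work in the energy space $X$ adapted to the linear operator $H=-\Delta+V_1$. Under (V1.1)--(V1.3) the potential $V_1$ is smooth with sub-quadratically growing derivatives, so $H$ is self-adjoint and nonnegative on $L^2(\R^3)$ and --- this is the technical backbone --- the Schr\"odinger propagator $U(t)=e^{-itH}$ obeys the short-time dispersive bound $\|U(t)f\|_{L^r}\lesssim |t|^{-3(1/2-1/r)}\|f\|_{L^{r'}}$ and the corresponding local-in-time Strichartz estimates; this is exactly the class of potentials for which the Fujiwara/Yajima construction of the fundamental solution applies, the harmonic case being the borderline. The term $V_2$, controlled by (V2) in $L^q+L^\infty$ with $q>3/2$, is absorbed as a lower-order perturbation. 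I would then recast \eqref{ME} as the Duhamel equation $u(t)=U(t)u_0+i\int_0^t U(t-s)g(u(s))\,ds$ with $g(u)=(|x|^{-\mu}\ast|u|^p)|u|^{p-2}u$, and estimate the nonlinearity in dual Strichartz norms by combining Proposition \ref{HSI1} with Sobolev embedding. The hypothesis $2-\mu/3<p<6-\mu$ is precisely what makes $g$ energy-subcritical, so $g$ is locally Lipschitz from bounded sets of $X$ into the relevant dual space, and a contraction-mapping argument on a small ball in $C([0,T],X)$ yields a unique solution with $T=T(\|u_0\|_X)$.

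The two conservation laws follow from the Hamiltonian structure: pairing the equation with $\bar u$ and taking imaginary parts annihilates the (real, gauge-invariant) potential and nonlinear contributions and gives $\frac{d}{dt}\|u(t)\|_2^2=0$; pairing with $\partial_t\bar u$ and taking real parts gives $\frac{d}{dt}E(u(t))=0$, since $g(u)$ is the gradient of the nonlocal term of $E$. Both identities are first proved for regularized data, where every manipulation is legitimate, and then propagated to $X$ via the continuous dependence obtained in the fixed-point step. For the virial identity I would assume in addition $|x|u_0\in L^2$ and first show, by the same a priori machinery, that $\|xu(t)\|_2$ stays finite on $[0,T]$. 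Writing $J(t)=\|xu(t)\|_2^2$ and using $\partial_t u=i(\Delta u-Vu+g(u))$, a first differentiation shows that the real potential term and the real density $(|x|^{-\mu}\ast|u|^p)|u|^p$ drop out, leaving $J'(t)=4\,\mathrm{Im}\int_{\R^3}\bar u\,x\cdot\nabla u\,dx$. Differentiating once more, the free part contributes $8\|\nabla u\|_2^2$, the commutator of $x\cdot\nabla$ with multiplication by $V$ produces $-4\int_{\R^3}x\cdot\nabla V\,|u|^2\,dx$, and the dilation homogeneity of the kernel $|x|^{-\mu}$ together with an integration by parts turns the nonlocal term into $-\tfrac{4(3(p-2)+\mu)}{p}\mathfrak{F}_\mu(u)$; collecting the three contributions gives exactly $J''(t)=8P(u(t))$ with $P$ as in \eqref{P}.

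The step I expect to be hardest is the rigorous justification of these formal computations. On the Cauchy side the delicate point is establishing the Strichartz estimates for $U(t)$ in the presence of the unbounded potential $V_1$. On the virial side one cannot differentiate $J$ twice directly in $X$, so I would regularize the weight $|x|^2$ --- for instance by $|x|^2 e^{-\epsilon|x|^2}$ or a smooth truncation --- carry out the integrations by parts with the regularized weight, and let $\epsilon\to 0$, using the uniform bounds $u\in C([0,T],X)$ and $|x|u\in C([0,T],L^2)$ to control the error terms. Making this limiting argument uniform is the main obstacle.
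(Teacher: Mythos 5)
You should first be aware that the paper contains no proof of Proposition \ref{CAUP} to compare against: the sentence immediately preceding it reads ``In the following we will make the following assumption,'' and the result is used throughout as a standing hypothesis, with the authors only remarking that the harmonic case $V(x)=|x|^2$ is treated in \cite{CGH} and pointing to \cite{C} for general background (later it is even invoked as ``Assumption \ref{Ass}'', a dangling reference evidently meant for this statement). So your proposal supplies an argument the paper deliberately omits rather than reproducing or deviating from one. On its own terms your outline is the standard and correct route: (V1.3) puts $V_1$ exactly in the at-most-quadratic (Fujiwara/Yajima) class for which the short-time dispersive and Strichartz estimates for $e^{-itH}$ hold, with the harmonic potential as the borderline; (V2) makes $V_2$ a lower-order perturbation; the window $2-\mu/3<p<6-\mu$ is precisely the range in which Proposition \ref{HSI1} plus Sobolev embedding makes the Hartree nonlinearity energy-subcritical and locally Lipschitz on bounded sets of $X$, so the contraction argument closes; and your virial coefficients are consistent with $\tfrac{d^2}{dt^2}\|xu\|_2^2=8\,\partial_\lambda E(u^\lambda)|_{\lambda=1}=8P(u)$, matching \eqref{P} (note $8\cdot\tfrac12=4$ for the potential term and $8\cdot\tfrac{3(p-2)+\mu}{2p}$ for the nonlocal term, as you wrote). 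You also correctly identify the two genuinely delicate points --- the Strichartz theory for the unbounded $V_1$ and the regularization of the weight $|x|^2$ needed to justify differentiating $\|xu(t)\|_2^2$ twice --- which is exactly where a full write-up would have to invest its effort. In short: no gap in the outline, but understand that you are proving something the paper assumes, not something it proves.
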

In fact, for  $V(x)=|x|^2$ and $0<\mu<3$,  Chen etal. in \cite {CGH} proved that
$$\frac{d}{dt}\|u(t)\|_{2}^2=0,\ \ \ \ \ \ \ \ \frac{d}{dt}E(u(t))=0$$
and
$$\|u(t)\|_{2}^2=\|u(0)\|_{2}^2\ \ \ \ and\ \ \ \ E(u(t))=E(u(0)).$$
Moreover,  if $2\leq p<2+(2-\mu)/3$, the equation \eqref{ME} exists globally in time for any initial value $u_0\in X$; while for  $2\leq p=2+(2-\mu)/3$, the equation \eqref{ME} exists globally in time provided the initial data $\|u_0\|_{L^2}$ sufficiently small.

In order to state our results, we need to  define on $X$,
$$S_\omega(v)=E(v)+\omega Q(v),\ Q(v)=\frac{1}{2}\|v\|_{2}^2$$
and
$$I_\omega(v)=\|\nabla v\|_{2}^2+\omega\|v\|_{2}^2+\int_{\mathbb{R}^3}V(x)|v(x)|^2dx-
 \displaystyle\mathfrak{F}_\mu(v).$$

Consider the minimization problem as follows
\begin{equation}
S=\inf_{v\in\mathcal{N}}S_\omega(v),
\end{equation}
where
\begin{equation}\label{A8}
\mathcal{N}_\omega=\{v\in X;v\neq 0, \ I_\omega(v)=0\}.
\end{equation}
\begin{Def}
A ground state solution of \eqref{SE} is $\varphi\in H^1(\mathbb{R}^3)$ with $v>0$ and solving
\begin{equation}\label{B1}
S_\omega(\varphi)=\inf_{v\in\mathcal{N}_\omega}S_\omega(v).
\end{equation}
\end{Def}
In the following we will use the notion
\begin{equation}\label{A9}
\mathcal{M}_\omega=\{v\in X;v\neq 0, S'_\omega(v)=0, S_\omega(v)=S\}
\end{equation}
to denote the set of ground state solutions.

\begin{Rem}\label{ppp}
We can assume that there exists $\omega_0\in(0,\ \infty)$ such that $\mathcal{M}_\omega$ is not empty and $\mathcal{M}_\omega\subset\{v\in X_G;|x|v(x)\in L^2(\mathbb{R}^3)$ for any $\omega\in(\omega_0,\ \infty)$(The detail we can see Section 2).
\end{Rem}

Now,we study the stability of the minimizers of \eqref{A8} in the following sense.
\begin{Def}\label{Def}
For $\varphi_\omega\in \mathcal{M}_\omega$, we say that a standing wave solution $e^{i \omega t}\varphi_\omega(x)$ of \eqref{ME} is stable in $X$ if for any $\varepsilon>0$, there exists $\delta>0$ such that $\inf_{\theta\in\mathbb{R}}\|u(0)-e^{i\theta}\varphi_\omega\|_X<\delta$ ,$\theta\in\mathbb{R}$,then the solution $u(t)$ of \eqref{ME} with $u(0)=u_0$ satisfies $$\inf_{\theta\in\mathbb{R}}\|u(t)-e^{i\theta}\varphi_\omega\|_X<\varepsilon\ \ \ for\ any\ t\geq 0.$$
Otherwise, $e^{i\omega t}\varphi_\omega$ is said to be unstable in $X$.
\end{Def}

For instability of standing wave solution of \eqref{ME},we have the following results.
\begin{thm}\label{Instability}
Let $0<\mu<3$, $2+(2-\mu)/3<p<6-\mu$  and assume that conditions $(V0)- (V2)$ hold.
Then there exists $\omega^*_0>\omega_0$ such that for any $\omega\in(\omega^*_0, \infty)$ with $\varphi_\omega(x)\in \mathcal{M}_\omega$, then the standing wave solution $u_\lambda(x,t)=e^{i \omega t}\varphi_\omega(x)$ of \eqref{ME} is unstable in $X$.
\end{thm}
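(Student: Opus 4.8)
The plan is to follow the Berestycki--Cazenave blow-up strategy: to exhibit initial data arbitrarily close to $\varphi_\omega$ in $X$ whose solutions blow up in finite time, so that the global standing wave $e^{i\omega t}\varphi_\omega$ cannot be stable. The basic tool is the $L^2$-invariant scaling $\varphi_\omega^\lambda(x):=\lambda^{3/2}\varphi_\omega(\lambda x)$, for which $Q(\varphi_\omega^\lambda)=Q(\varphi_\omega)$, $\||x|\varphi_\omega^\lambda\|_2=\lambda^{-1}\||x|\varphi_\omega\|_2$, and $\varphi_\omega^\lambda\to\varphi_\omega$ in $X$ as $\lambda\to1$ (the last using the polynomial growth in $(V1.1)$). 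A direct computation identifies $P$ with the scaling derivative of the action,
\[
P(v)=\frac{d}{d\lambda}\Big|_{\lambda=1}S_\omega(v^\lambda),\qquad P(v^\lambda)=\lambda\,\frac{d}{d\lambda}S_\omega(v^\lambda),
\]
and, since $\varphi_\omega\in\mathcal{M}_\omega$ is a genuine critical point of $S_\omega$, evaluating this directional derivative yields the Pohozaev--virial identity $P(\varphi_\omega)=0$. Hence $\lambda=1$ is a critical point of $g(\lambda):=S_\omega(\varphi_\omega^\lambda)$.

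The first step is to show $\lambda=1$ is a strict local maximum of $g$, i.e. $g''(1)<0$, for all large $\omega$. Inserting $I_\omega(\varphi_\omega)=0$ and $P(\varphi_\omega)=0$ into the second derivative, the nonlocal part contributes the strictly negative quantity $\frac{3(p-2)+\mu}{2p}\,(8-3p-\mu)\,\mathfrak{F}_\mu(\varphi_\omega)$, negative precisely because $p>2+(2-\mu)/3$, while the potential contributes terms controlled by $\int_{\R^3}x\cdot\nabla V\,|\varphi_\omega|^2$ and $\int_{\R^3}\big(\sum_{i,j}x_ix_j\,\partial_{x_i}\partial_{x_j}V\big)|\varphi_\omega|^2$, which are estimated through $(V1.2)$--$(V2)$. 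Using the $\omega\to\infty$ concentration profile of $\varphi_\omega$ recalled in Remark~\ref{ppp} and Section~2 (where, after rescaling, $\varphi_\omega$ approaches the ground state of the potential-free Choquard equation), one checks that $\mathfrak{F}_\mu(\varphi_\omega)$ grows strictly faster in $\omega$ than the potential integrals; therefore there is $\omega^*_0>\omega_0$ with $g''(1)<0$ for $\omega>\omega^*_0$. Consequently, for $\lambda>1$ close to $1$ the datum $u_0:=\varphi_\omega^\lambda$ satisfies both $S_\omega(u_0)<S$ and $P(u_0)<0$ while remaining arbitrarily close to $\varphi_\omega$ in $X$.

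The second step is the invariant-set argument. By Proposition~\ref{CAUP} the solution $u(t)$ with $u(0)=u_0$ conserves mass and energy, so $S_\omega(u(t))\equiv S_\omega(u_0)<S$ and $Q(u(t))\equiv Q(\varphi_\omega)$. I would then prove the coercivity estimate
\[
P(v)\le S_\omega(v)-S\qquad\text{whenever } Q(v)=Q(\varphi_\omega),\ S_\omega(v)<S,\ P(v)<0,
\]
by locating the zero $s_*\in(0,1)$ of $s\mapsto P(v^s)$, invoking the variational characterization $S=\inf\{S_\omega(w):w\ne0,\ P(w)=0,\ Q(w)=Q(\varphi_\omega)\}$ so that $S_\omega(v^{s_*})\ge S$, and then bounding $S_\omega(v)-S_\omega(v^{s_*})$ in terms of $P(v)$ via an elementary comparison of $P(v^s)$ with $s\,P(v)$ on $(s_*,1)$. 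This shows the set $\{v:Q(v)=Q(\varphi_\omega),\,S_\omega(v)<S,\,P(v)<0\}$ is invariant under the flow (if $P(u(t))$ reached $0$ then $S_\omega(u(t))\ge S$, a contradiction) and delivers the uniform bound $P(u(t))\le S_\omega(u_0)-S=:-\delta<0$.

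Finally, since $|x|u_0\in L^2(\R^3)$, the virial identity of Proposition~\ref{CAUP} gives $\frac{d^2}{dt^2}\|xu(t)\|_2^2=8P(u(t))\le-8\delta$, forcing $\|xu(t)\|_2^2$ to become negative in finite time unless the maximal existence time is finite; hence $u(t)$ blows up in $X$ in finite time. As $e^{i\omega t}\varphi_\omega$ is global with bounded $X$-norm, blow-up of solutions originating arbitrarily near $\varphi_\omega$ violates Definition~\ref{Def}, giving the instability. The crux of the whole argument is that the confining potential opposes blow-up: both the sign of $g''(1)$ and the coercivity estimate (in particular the $\{P=0\}$ characterization of $S$ and the comparison of $P(v^s)$ with $s\,P(v)$) require the nonlocal nonlinearity to dominate the potential integrals, which is exactly what the largeness $\omega>\omega^*_0$ and the structural hypotheses $(V1.1)$--$(V2)$ guarantee through the ground-state asymptotics of Section~2.
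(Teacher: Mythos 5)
Your Step 1 (showing $\partial_\lambda^2E(\varphi_\omega^\lambda)|_{\lambda=1}<0$ for large $\omega$ via the rescaled ground-state asymptotics) and your Step 3 (the virial identity plus conservation laws) coincide with the paper's Proposition \ref{PP} and the end of its proof. The gap is in your Step 2. You replace the paper's local argument by a global invariant-set/coercivity estimate $P(v)\le S_\omega(v)-S$ on $\{Q(v)=Q(\varphi_\omega),\,S_\omega(v)<S,\,P(v)<0\}$, and this rests on two unproven and, for the potentials actually allowed here, problematic claims. First, you invoke the characterization $S=\inf\{S_\omega(w):w\neq0,\ P(w)=0,\ Q(w)=Q(\varphi_\omega)\}$. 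Nothing in the paper establishes this: $S$ is defined as the infimum over the Nehari manifold $\{I_\omega=0\}$, and showing that the same ground state minimizes $S_\omega$ on the Pohozaev set $\{P=0\}$ with a mass constraint, in the presence of a general potential satisfying only $(V0)$--$(V2)$, is a separate nontrivial variational statement. Second, your plan to locate a zero $s_*\in(0,1)$ of $s\mapsto P(v^s)$ and to compare $P(v^s)$ with $sP(v)$ fails for confining potentials, which are the main case of interest. Indeed, with $v^s(x)=s^{3/2}v(sx)$ one has
\begin{equation*}
P(v^s)=s^2\|\nabla v\|_2^2-\frac12\int_{\mathbb{R}^3}\frac{y}{s}\cdot\nabla V\Bigl(\frac{y}{s}\Bigr)|v(y)|^2dy-\frac{3(p-2)+\mu}{2p}\,s^{3(p-2)+\mu}\mathfrak{F}_\mu(v),
\end{equation*}
and for $V(x)=|x|^2$ the middle term equals $-s^{-2}\int|y|^2|v|^2\to-\infty$ as $s\to0^+$. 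Hence $P(v^s)<0$ both near $s=0$ and at $s=1$, a zero in $(0,1)$ need not exist, and the convexity-type comparison underlying the estimate $P(v)\le S_\omega(v)-S$ has no basis.

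The paper circumvents exactly this difficulty by staying local: Lemma \ref{31} applies the implicit function theorem to the \emph{Nehari} functional to produce $\lambda(v)$ near $1$ with $I(v^{\lambda(v)})=0$ for $v\in U_{\varepsilon}(\varphi_\omega)$; Lemma \ref{32} combines the uniform concavity $\partial_\lambda^2E(v^\lambda)<0$ on a neighborhood with the already-established Nehari characterization $S_\omega(v^{\lambda(v)})\ge S_\omega(\varphi_\omega)$ to obtain $E(\varphi_\omega)\le E(v)+(\lambda(v)-1)P(v)$; and Lemma \ref{33} converts this into the uniform bound $P(u(t))\le-\delta_2$ for as long as $u(t)$ remains in $U_{\varepsilon_0}(\varphi_\omega)$, which is all the virial argument needs. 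To repair your proof you would either have to prove the $\{P=0\}$ minimization characterization of $S$ with the mass constraint (and handle the degenerate behavior of $P(v^s)$ as $s\to0$ for growing potentials), or adopt the paper's local Taylor-expansion route, which requires neither.
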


The existence and qualitative properties of solutions of the Choquard equation have been widely studied in the last decades.
In \cite{U1}, Lieb proved the existence and uniqueness, up to translations,
of the ground state of
$$
-\Delta u+\omega u=(|x|^{-1}\ast|u|^2)u,\ \ \hbox{in}\ \ \mathbb{R}^3.
$$
Later, in \cite{Ls}, Lions showed the existence of
a sequence of radially symmetric solutions. In \cite{LL,  MS1} the authors studied
\begin{equation}\label{V0}
-\Delta u+\omega u=(|x|^{-1}\ast|u|^p)|u|^{p-2}u,\ \ \hbox{in}\ \ \mathbb{R}^3
\end{equation} showed the regularity, positivity
and radial symmetry of the ground states and
derived decay property at infinity as well. Generally, the uniqueness and nondegeneracy of the ground states  is not known. In a recent paper \cite{U2}, Xiang considered the  uniqueness of the equation \eqref{V0} and proved the following property of the ground state.

\begin{lem}\label{RGS}
There exists $0<\eta<1/3$ such that for any $p$, $2<p<2+\eta$, there exists a unique positive radial ground state $\psi_1\in H^1(\mathbb{R}^3)$ for equation
\begin{equation}\label{UN}
-\Delta u+ u=(|x|^{-1}\ast|u|^p)|u|^{p-2}u,\ \ \hbox{in}\ \ \mathbb{R}^3.
\end{equation}
\end{lem}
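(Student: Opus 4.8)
The plan is to treat Lemma~\ref{RGS} as a perturbation result anchored at the exponent $p=2$, where equation \eqref{UN} reduces to the classical Choquard--Pekar equation $-\Delta u + u = (|x|^{-1}\ast|u|^2)u$ whose ground state is completely understood. The existence part is the routine half: for $p$ in the subcritical range $(5/3,5)=(2-\mu/3,\,6-\mu)$ with $\mu=1$, I would work in the radial Sobolev space $H^1_r(\R^3)$, where the embedding into $L^s(\R^3)$ is compact for the relevant exponents. Minimizing the associated energy on the Nehari manifold (equivalently, solving the constrained minimization built from $\mathfrak{F}_\mu$ and controlled by the Hardy--Littlewood--Sobolev inequality of Proposition~\ref{HSI1}) produces a positive radial minimizer; positivity, smoothness, and radial monotonicity then follow from the maximum principle together with the rearrangement and moving-plane arguments standard for Choquard nonlinearities. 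This already yields \emph{existence} of a positive radial ground state $\psi_1$ for every such $p$; the genuine content of the lemma is \emph{uniqueness} for $p$ close to $2$.

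For uniqueness I would build on two facts about the limit problem at $p=2$. First, Lieb's theorem gives a unique (up to translation) positive ground state $\psi_0$ of the Choquard--Pekar equation. Second, the linearized operator $L_+ v = -\Delta v + v - (|x|^{-1}\ast\psi_0^2)v - 2\psi_0\,(|x|^{-1}\ast(\psi_0 v))$ around $\psi_0$ is nondegenerate in the sense that its kernel is spanned only by the translations $\partial_{x_j}\psi_0$; since these are odd in the respective variables, the restriction of $L_+$ to the radial subspace has trivial kernel and, being a compact perturbation of $-\Delta+1$, is invertible there. With these in hand, I would regard the family \eqref{UN} as a smooth one-parameter family in $p$ and apply the implicit function theorem in $H^1_r(\R^3)$: nondegeneracy of $\psi_0$ at $p=2$ yields a $C^1$ branch $p\mapsto\psi_p$ of radial solutions, unique in a fixed $H^1_r$-neighbourhood of $\psi_0$, for all $p$ in some interval $(2,2+\eta)$.

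The genuinely delicate step — the main obstacle — is upgrading this \emph{local} uniqueness near $\psi_0$ to \emph{global} uniqueness among \emph{all} positive radial ground states at each fixed $p\in(2,2+\eta)$. Here I would argue by contradiction: if there were two distinct ground states for a sequence $p_n\to 2^+$, then uniform a priori bounds (uniform $H^1$ control from the ground-state energy level, together with uniform exponential decay obtained from the decay estimates for Choquard ground states) would let me pass to the limit and show that both sequences converge in $H^1_r(\R^3)$ to a positive radial ground state of the $p=2$ problem, which by Lieb's theorem is $\psi_0$. But then for $n$ large both solutions lie in the neighbourhood of $\psi_0$ where the implicit-function branch is the unique solution, forcing them to coincide — a contradiction. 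Shrinking the interval to $(2,2+\eta)$ with $\eta<1/3$ absorbs the loss from this compactness argument while keeping $p$ safely subcritical. The crux throughout is the nondegeneracy at $p=2$ and the uniformity of the decay estimates as $p\to 2^+$, without which neither the implicit function theorem nor the limiting compactness argument can be run.
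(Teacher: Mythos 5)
The paper does not prove this lemma at all: it is quoted verbatim as a known result from the reference \cite{U2} (C.~Xiang, \emph{Uniqueness and nondegeneracy of ground states for Choquard equations in three dimensions}), so there is no in-paper argument to compare against. Your outline is, in substance, a faithful reconstruction of the strategy of that cited work: existence by constrained minimization in $H^1_r$, local uniqueness by the implicit function theorem around the Pekar ground state at $p=2$, and a compactness/contradiction argument to promote local to global uniqueness for $p$ close to $2$. Two remarks. First, the entire weight of the argument rests on the nondegeneracy of the linearization $L_+$ at the $p=2$ ground state (kernel spanned only by $\partial_{x_j}\psi_0$); you correctly isolate this, but you should flag that it is itself a hard theorem (due to Lenzmann, and Wei--Winter, for the Choquard--Pekar equation) rather than an elementary ``fact about the limit problem'' --- as a blind proof it is acceptable to import it, but it is the genuine crux, on the same footing as Lieb's uniqueness theorem. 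Second, your explanation of the bound $\eta<1/3$ is off: the compactness argument only yields \emph{some} small $\eta>0$, and the explicit ceiling $1/3$ has nothing to do with ``absorbing a loss'' there; it is imposed so that $p<2+(2-\mu)/3$ with $\mu=1$, i.e.\ so that $p$ stays mass-subcritical, which is what the rest of the paper (global well-posedness and the stability Theorem \ref{MR}) actually needs. Neither point is a gap in the logic of the sketch, but both should be stated accurately if the argument were written out in full.
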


By Remark \ref{ppp},  we have the following stability results for the standing waves of  equation \eqref{ME}.
\begin{thm}\label{MR}
Assume that conditions $(V0)-(V2)$ hold and $2<p<2+\eta'$ for some $0<\eta'<\eta$ where $\eta>0$ is the constant  in Lemma \ref{RGS}.  There exists $\omega_{0}^*>\omega_0$ such that, for any $\varphi_\omega(x)\in\mathcal{M}_\omega$,  the standing wave solution $e^{i \omega t}\varphi_\omega(x)$ of \eqref{ME} is stable in $X_G$ in the sense of definition \ref{Def}.
\end{thm}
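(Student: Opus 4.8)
The plan is to prove orbital stability by the Cazenave--Lions concentration-compactness method, using the conservation laws of Proposition~\ref{CAUP}, the variational characterization of $\mathcal{M}_\omega$, and the uniqueness of the ground state from Lemma~\ref{RGS}. The underlying principle is that in the $L^2$-subcritical window $2<p<2+\eta'$, which lies inside $(2,\,2+(2-\mu)/3)$, the energy $E$ is bounded below at fixed mass, so minimizing sequences are bounded in $X_G$ and, thanks to the confining potential and radial symmetry, relatively compact modulo the phase symmetry $v\mapsto e^{i\theta}v$.

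First I would recast the problem as a mass-constrained minimization: with $c=\|\varphi_\omega\|_2^2$ and $d(c)=\inf\{E(v):v\in X_G,\ \|v\|_2^2=c\}$, one checks that the elements of $\mathcal{M}_\omega$ are exactly the minimizers of $d(c)$, with $\omega$ emerging as the Lagrange multiplier in \eqref{SE}; this is equivalent to the Nehari formulation on $\mathcal{N}_\omega$. Coercivity comes from Proposition~\ref{HSI1} together with the Gagliardo--Nirenberg inequality, which yield $\mathfrak{F}_\mu(v)\le C\|\nabla v\|_2^{3(p-2)+\mu}\|v\|_2^{6-p-\mu}$; the hypothesis $p<2+(2-\mu)/3$ is precisely $3(p-2)+\mu<2$, so the gradient power stays below $2$, while the $V_2$-part of the potential is lower order by $(V2)$ and $V_1\ge0$ only helps. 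Hence on the sphere $\|v\|_2^2=c$ one gets $E(v)\ge\tfrac14\|\nabla v\|_2^2-C'(c)$, giving $d(c)>-\infty$ and boundedness of every minimizing sequence in $X_G$.

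The main step, and the principal obstacle, is compactness. Given a minimizing sequence $\{v_n\}\subset X_G$, boundedness provides a weak limit $v_*\in X_G$, and the task is to upgrade this to strong convergence in $X$ after a phase rotation. Since the potential breaks translation invariance there is no modulus of translation to extract; instead, the control of $\int_{\R^3}V_1|v_n|^2$ by the energy confines the mass, and radial symmetry supplies the compact Strauss embedding $H^1_{\mathrm{rad}}(\R^3)\hookrightarrow L^s(\R^3)$ for $2<s<6$. Using these I would pass to the limit in the nonlocal term, $\mathfrak{F}_\mu(v_n)\to\mathfrak{F}_\mu(v_*)$, and in the $V_2$-integral, deduce $\|v_*\|_2^2=c$ so that $v_*$ attains $d(c)$, and finally obtain convergence of the full $X$-norm along the weak limit, which forces $v_n\to v_*$ strongly in $X$. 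The delicate points here are excluding dichotomy of the Choquard energy and treating the weighted term $\int V_1|v_n|^2$ under only a one-sided bound on $V_1$.

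It remains to pin the limit to the prescribed orbit, and this is where Lemma~\ref{RGS} enters. Compactness alone yields only $\operatorname{dist}_X(v_n,\mathcal{M}_\omega)\to0$, i.e.\ stability of the set $\mathcal{M}_\omega$. For $2<p<2+\eta'$ and $\omega>\omega_0^*$ large I would rescale $\varphi_\omega(x)=\omega^{1/(p-1)}\psi(\sqrt{\omega}\,x)$, so that after normalizing the frequency to one the potential contributes a term tending to zero as $\omega\to\infty$ and \eqref{SE} becomes a vanishing perturbation of the constant-coefficient equation \eqref{UN}; its positive radial ground state being unique, a limiting argument forces $\mathcal{M}_\omega=\{e^{i\theta}\varphi_\omega:\theta\in\R\}$, a single orbit. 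With this identification the conclusion follows by contradiction: were the standing wave unstable, there would exist $\varepsilon_0>0$, data $u_{0,n}\to\varphi_\omega$ in $X$ modulo phase, and times $t_n$ with $\inf_\theta\|u_n(t_n)-e^{i\theta}\varphi_\omega\|_X\ge\varepsilon_0$; conservation of $E$ and $Q$ makes $\{u_n(t_n)\}$ a minimizing sequence in $X_G$, so by the compactness above a subsequence converges strongly to some $e^{i\theta_*}\varphi_\omega$, contradicting the lower bound $\varepsilon_0$.
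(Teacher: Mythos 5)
Your proposal follows the Cazenave--Lions route (stability of the set of mass-constrained minimizers plus an identification of that set with a single orbit), whereas the paper takes the Weinstein/Grillakis--Shatah--Strauss route: it linearizes $S_\omega$ at $\varphi_\omega$, splits $S_\omega''$ into the two real operators $L_\omega^1$, $L_\omega^2$, proves their positivity on the orthogonal complement of $\{\varphi_\omega, i\varphi_\omega\}$ for large $\omega$ by passing to the rescaled limit $\psi_1$ and invoking the nondegeneracy result of \cite{U2} (Lemmas \ref{V0L} and \ref{VL}), deduces the coercivity estimate $E(u)-E(\varphi_\omega)\geq C\inf_{\theta}\|u-e^{i\theta}\varphi_\omega\|_X^2$ on the mass sphere (Lemma \ref{SCD}), and concludes by the standard contradiction argument with the conserved quantities. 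These are genuinely different strategies, but as written your version has gaps that the paper's route is specifically designed to avoid.

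First, you assert that the elements of $\mathcal{M}_\omega$ (Nehari ground states of $S_\omega$) are ``exactly'' the minimizers of $d(c)$ with $c=\|\varphi_\omega\|_2^2$. This is not automatic: a Nehari minimizer at a prescribed frequency $\omega$ need not be a global minimizer of $E$ at its own mass, and conversely the Lagrange multiplier produced by the mass-constrained problem need not equal the given $\omega$; establishing the equivalence would require studying the map $\omega\mapsto\|\varphi_\omega\|_2^2$ and strict monotonicity properties of $d$, none of which is sketched. Second, your compactness step leans on the radial Strauss embedding, but that embedding is compact only into $L^s$ for $2<s<6$; it does not prevent $L^2$-mass from escaping to infinity, and since $(V1.1)$ only gives $0\le V_1\le M(1+|x|^m)$ (so $V_1$ may be bounded or even identically zero), there is no confinement to fall back on. Ruling out vanishing and dichotomy therefore requires the full strict-subadditivity argument for $d(c)$, which you flag as ``delicate'' but do not supply. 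Third, and most importantly, the Cazenave--Lions method only yields stability of the \emph{set} of minimizers; to obtain stability of the individual orbit you must show that this set equals $\{e^{i\theta}\varphi_\omega:\theta\in\mathbb{R}\}$. Uniqueness of the positive radial ground state of the limit equation \eqref{UN} does not transfer to the perturbed equation \eqref{SE} by ``a limiting argument'' alone --- one needs the nondegeneracy of $\psi_1$ together with an implicit-function-theorem argument to get uniqueness for large $\omega$, and this is precisely the technical content that the paper instead channels into the spectral estimates for $L_1$ and $L_\omega^1$. Note also that the paper never claims $\mathcal{M}_\omega$ is a single orbit: its coercivity argument applies to each $\varphi_\omega\in\mathcal{M}_\omega$ separately, which is why Theorem \ref{MR} can be stated ``for any $\varphi_\omega\in\mathcal{M}_\omega$''.
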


In the last decades, many people studied the instability and stability of standing wave solution of local Schr\"odinger equation (see e.g. \cite{BF},\cite{ CL},\cite{CFF}-\cite{GGX}, \cite{CAS},\cite{S},\cite{JLT},\cite{H}):
$$i\partial_t u = -\triangle u + V(x)u + |u|^{p-1}u.$$
The idea of the present paper goes back to the paper \cite{F1, F0} by R. Fukuizumi, there the authors assumed that $V(x)$ satisfying conditions $(V0)- (V2)$ and applied the concentration principle(see \cite{PLL1} and \cite{PLL2}) to study the ground state solution $\varphi_\omega$
of the following elliptic equation:
\begin{equation}\label{FEE}
-\Delta\varphi+V(x)\varphi+\omega\varphi-|\varphi|^{p-1}\varphi=0,
\end{equation}
where $1<p<2^*-1$.  Then, $\widetilde{\varphi}_\omega(x)$, defined by the  scaling of  $\varphi_\omega(x)=\omega^{1/(p-1)}\widetilde{\varphi}_\omega(\sqrt{\omega}x)$,
 is a ground state solution of
\begin{equation}\label{REE2}
-\Delta\varphi+\varphi+\omega^{-1}V(\frac{x}{\sqrt{\omega}})\varphi-|\varphi|^{p-1}\varphi=0.
\end{equation}
Then under suitable assumptions on the $V(x)$, $\omega^{-1}V(\frac{x}{\sqrt{\omega}})\widetilde{\varphi}_\omega\to 0$ in some sense as $\omega\to \infty$. Then for  $p>1+4/n$, inspired the behavior of  the orbit of the standing wave of \eqref{REE2} with $V(x)=0$, he obtained that if the sufficient condition $\partial_{\lambda}^2E(\varphi_{\omega}^\lambda)|_{\lambda=1}<0$ holds, where $\varphi_{\omega}^\lambda=\lambda^{n/2}\varphi_\omega(\lambda x)$ then the ground state  solution of \eqref{FEE} blows up at finite time, i.e. the standing wave solution is instable. Then, using the stability result of the standing wave of the limit problem, he proved that the standing wave of \eqref{FEE} is stable. This type of arguments was used by the authors in \cite{CWZ} to study the stability of the standing waves for a class of Hartree equation with potentials including the harmonic one as a particular case. In the present paper we follow the idea explored in \cite{BF} and \cite{F1} to study the generalized Choquard equation with a general class of potential which also includes the harmonic one as a particular case. Moreover, in our situation, the exponent $p$ lies in a interval close to $2$.

This paper is organized as follows. In section $2$, we will prove some basic properties of the ground state solution of \eqref{ME}. In the first subsection of section $3$, we give a sufficient condition for verifying the instability of standing wave solution of \eqref{ME}. Then, in subsection $3.2$, we give the proof of main Theorem about of the instability. In subsection $4.1$, the sufficient conditions for verifying the stability of standing wave solution of \eqref{ME} with $\mu=1$ is investigated. Finally, we prove the main result about of the stability.

\section{Basic results for the ground states }
 In this section, we will give the definition of the ground state solution of \eqref{SE} and prove the existence of the ground state solution of \eqref{SE}.

We study the functional $S_\omega(v)\in C^1(H^1(\mathbb{R}^3), \R)$ with its derivative  given by
$$\langle S_{\omega}^{'}(v),\phi\rangle=\int_{\mathbb{R}^3}(\nabla v\nabla\phi+\omega v\phi)dx+\int_{\mathbb{R}^3}V(x)v(x)\phi(x)dx-\int_{\mathbb{R}^3\times\mathbb{R}^3}\frac{|v(x)|^{p-1}|v(y)|^{p}}{|x-y|^\mu}\phi(x)dxdy,$$
$\phi(x)\in C_{0}^\infty(\mathbb{R}^3)$. Hence, each critical point of $S_\omega(v)$ is a weak solution of \eqref{SE}.
Let $W=\{v\in X;|x|v(x)\in L^2(\mathbb{R}^3)\}$ then it is easy to see that the embedding $W\hookrightarrow L^{q+1}$ is compact, where $1\leq q<5$. By variational arguments, we know
\begin{Prop}
Let $0<\mu<3$ and $2-\mu/3<p<2+(2-\mu)/3$. For $\omega>0$, there exists $\varphi_\omega\in\mathcal{N}_\omega$ such that
$$S_\omega(\varphi_\omega)=\inf_{v\in\mathcal{N}_\omega}S_\omega(v).$$
\end{Prop}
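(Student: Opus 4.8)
The plan is to use the direct method of the calculus of variations on the Nehari-type constraint $\mathcal{N}_\omega$. The first observation is an algebraic one: since $I_\omega(v)=0$ on $\mathcal{N}_\omega$, one has $\mathfrak{F}_\mu(v)=B(v)$ there, where $B(v):=\|\nabla v\|_{2}^2+\omega\|v\|_{2}^2+\int_{\mathbb{R}^3}V(x)|v(x)|^2\,dx$, and consequently
\[
S_\omega(v)=S_\omega(v)-\tfrac{1}{2p}I_\omega(v)=\frac{p-1}{2p}\,B(v),\qquad v\in\mathcal{N}_\omega.
\]
Using $V_1\ge 0$ from $(V1.1)$ and absorbing $\int V_2|v|^2$ through $(V2)$, H\"older and the Sobolev embedding (the hypothesis $q>3/2$ guarantees the interpolation exponent $2q'<6$), the form $B$ is comparable to $\|\cdot\|_X^2$ up to a lower-order $L^2$ term, so $B$ is coercive and $S_\omega$ is bounded below on $\mathcal{N}_\omega$. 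To see $\mathcal{N}_\omega\neq\emptyset$, for any $u\in X\setminus\{0\}$ the map $t\mapsto I_\omega(tu)=t^2B(u)-t^{2p}\mathfrak{F}_\mu(u)$ vanishes at a unique $t>0$ because $2p>2$ (that is $p>1$, which holds since $\mu<3$). Finally $S:=\inf_{\mathcal{N}_\omega}S_\omega>0$: on $\mathcal{N}_\omega$, Proposition \ref{HSI1} together with the Sobolev embedding yields $B(v)=\mathfrak{F}_\mu(v)\le C\,B(v)^p$, and since $p>1$ this forces $B(v)\ge c_0>0$, hence $S_\omega(v)=\frac{p-1}{2p}B(v)\ge\frac{p-1}{2p}c_0>0$.

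Next I would take a minimizing sequence $\{v_n\}\subset\mathcal{N}_\omega$. By the identity above and coercivity, $\{v_n\}$ is bounded in $X$, so after passing to a subsequence $v_n\rightharpoonup\varphi$ weakly in $X$. The crucial point is to upgrade this to strong convergence in $L^{r}$ with $r=\frac{6p}{6-\mu}$; the range $2-\mu/3<p<2+(2-\mu)/3$ places $r$ strictly inside $(2,6)$, equivalently $r=q+1$ with $1<q<5$. Because the problem is radial ($V$ is radial, and rearrangement does not increase $S_\omega$ along the constraint), one may restrict to radial functions, where the compact embedding $W\hookrightarrow L^{q+1}$ recorded above (equivalently the compactness of $H^1_{\mathrm{rad}}(\mathbb{R}^3)\hookrightarrow L^r$ for $2<r<6$) applies; this gives $v_n\to\varphi$ strongly in $L^r$, and hence, by Proposition \ref{HSI1}, $\mathfrak{F}_\mu(v_n)\to\mathfrak{F}_\mu(\varphi)$. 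Since $\mathfrak{F}_\mu(v_n)=B(v_n)\ge c_0>0$, the limit satisfies $\mathfrak{F}_\mu(\varphi)\ge c_0$, so $\varphi\neq 0$.

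It remains to show $\varphi\in\mathcal{N}_\omega$ and $S_\omega(\varphi)=S$. Weak lower semicontinuity of $B$ gives $B(\varphi)\le\liminf_n B(v_n)$, while $\mathfrak{F}_\mu(v_n)\to\mathfrak{F}_\mu(\varphi)$, so $I_\omega(\varphi)=B(\varphi)-\mathfrak{F}_\mu(\varphi)\le 0$. If $I_\omega(\varphi)<0$, then $B(\varphi)<\mathfrak{F}_\mu(\varphi)$ and the scaling $t\varphi$ lies on $\mathcal{N}_\omega$ for the unique $t\in(0,1)$ with $t^{2p-2}=B(\varphi)/\mathfrak{F}_\mu(\varphi)$; then $S_\omega(t\varphi)=\frac{p-1}{2p}t^2B(\varphi)<\frac{p-1}{2p}B(\varphi)\le\frac{p-1}{2p}\liminf_n B(v_n)=\liminf_n S_\omega(v_n)=S$, contradicting $t\varphi\in\mathcal{N}_\omega$. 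Hence $I_\omega(\varphi)=0$, i.e. $\varphi\in\mathcal{N}_\omega$, and then $S_\omega(\varphi)=\frac{p-1}{2p}B(\varphi)\le S\le S_\omega(\varphi)$, forcing equality. I expect the main obstacle to be the compactness step: one must rule out loss of mass at infinity, which is precisely where the subcritical range of $p$ and the compact embedding (via radial symmetry, or via the weight defining $W$) are indispensable. A secondary technical point is the coercivity of $B$, which requires carefully absorbing $\int V_2|v|^2$ by means of $(V2)$.
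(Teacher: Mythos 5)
Your argument is essentially the one the paper intends: the paper offers no proof beyond noting the compact embedding $W\hookrightarrow L^{q+1}$ for $1\le q<5$ and writing ``by variational arguments,'' and your Nehari-manifold direct method --- the identity $S_\omega=\frac{p-1}{2p}B$ on $\mathcal{N}_\omega$, coercivity, a bounded minimizing sequence, strong $L^{6p/(6-\mu)}$ convergence feeding into Proposition \ref{HSI1}, and the scaling step ruling out $I_\omega(\varphi)<0$ --- is exactly that standard argument written out. The one place where you are on the same thin ice as the paper is the compactness step you yourself flag: under $(V0)$--$(V2)$ alone neither of your two routes is automatic (the rearrangement comparison of the potential term needs the radial $V$ to be nondecreasing, and a sequence bounded in $X$ need not be bounded in $W$ unless $V_1$ actually grows at infinity), and likewise the positivity and weak lower semicontinuity of $B$ for \emph{every} $\omega>0$ require more than $(V2)$ gives when $V_2$ has a negative bounded part --- but these are gaps in the proposition as stated rather than defects peculiar to your proof.
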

Then there is a Lagrange multiplier $\lambda$ such that
\begin{equation}\label{B2}
S_{\omega}^{'}(\varphi_\omega)-\lambda I_{\omega}^{'}(\varphi_\omega)=0.
\end{equation}
Multiple both side of  the equation \eqref{B2} by $\varphi_\omega$, we obtain
$$\langle S_{\omega}^{'}(\varphi_\omega),\varphi_\omega\rangle=\lambda\langle I_{\omega}^{'}(\varphi_\omega),\ \varphi_\omega\rangle.$$
Noticing that $$\langle S_{\omega}^{'}(\varphi_\omega),\varphi_\omega\rangle=I_\omega(\varphi_\omega)=0$$ and
$$\langle I_{\omega}^{'}(\varphi_\omega),\varphi_\omega\rangle=-2(p-1) \displaystyle\mathfrak{F}_\mu(\varphi_\omega)<0.$$
We know $\lambda=0$ and  $\varphi_\omega$ is a ground state solution of \eqref{SE}.

\begin{lem}\label{Inf}
Let $0<\mu<3$,  $2-\mu/3<p<6-\mu$. For any $\omega>0$ with $\varphi_\omega\in\mathcal{M}_\omega$, we have
$$
\aligned
\mathfrak{F}_\mu(\varphi_\omega)&=\inf\Big\{\mathfrak{F}_\mu(v);v\in X\setminus\{0\},I_{\omega}(v)=0\Big \}\\
&=\inf\Big\{\mathfrak{F}_\mu(v); v\in X\setminus\{0\},I_{\omega}(v)\leq 0 \Big\},
\endaligned
$$
and
$$
S_{\omega}(\varphi_\omega)=\inf\Big\{S_{\omega}(v);v\in X\setminus\{0\},\mathfrak{F}_\mu(v)=\mathfrak{F}_\mu(\varphi_\omega)\Big\}.
$$
\end{lem}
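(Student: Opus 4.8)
The plan is to exploit the different homogeneities of the quadratic and nonlocal parts of the two functionals. Writing $A(v)=\|\nabla v\|_{2}^2+\omega\|v\|_{2}^2+\int_{\mathbb{R}^3}V(x)|v(x)|^2\,dx$ for the quadratic part, so that $I_\omega(v)=A(v)-\mathfrak{F}_\mu(v)$ and $S_\omega(v)=\tfrac12 A(v)-\tfrac{1}{2p}\mathfrak{F}_\mu(v)$, one checks directly from the definitions the algebraic identity
$$S_\omega(v)=\tfrac12 I_\omega(v)+\tfrac{p-1}{2p}\,\mathfrak{F}_\mu(v)\qquad\text{for all }v\in X.$$
In particular, on the constraint set $\mathcal{N}_\omega$, where $I_\omega(v)=0$, this reduces to $S_\omega(v)=\tfrac{p-1}{2p}\mathfrak{F}_\mu(v)$. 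Since $\varphi_\omega\in\mathcal{M}_\omega$ satisfies $I_\omega(\varphi_\omega)=\langle S'_\omega(\varphi_\omega),\varphi_\omega\rangle=0$, it lies in $\mathcal{N}_\omega$ and minimizes $S_\omega$ there; because the factor $\tfrac{p-1}{2p}$ is a positive constant, $\varphi_\omega$ simultaneously minimizes $\mathfrak{F}_\mu$ over $\mathcal{N}_\omega$, which is exactly the first equality.

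For the second equality I would use a scaling argument. With $v_t=tv$ the homogeneities give $I_\omega(tv)=t^2A(v)-t^{2p}\mathfrak{F}_\mu(v)$ and $\mathfrak{F}_\mu(tv)=t^{2p}\mathfrak{F}_\mu(v)$. The inclusion $\{I_\omega=0\}\subset\{I_\omega\le 0\}$ makes the infimum over the larger set no larger, so only the reverse inequality needs work. Given any $v\ne 0$ with $I_\omega(v)\le 0$, the map $t\mapsto I_\omega(tv)$ is positive for small $t>0$ (using $A(v)>0$) and tends to $-\infty$, hence has a unique positive zero $t^\ast=\bigl(A(v)/\mathfrak{F}_\mu(v)\bigr)^{1/(2p-2)}$; the hypothesis $I_\omega(v)\le 0$ forces $A(v)\le\mathfrak{F}_\mu(v)$ and therefore $t^\ast\le 1$. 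Then $t^\ast v\in\mathcal{N}_\omega$ and $\mathfrak{F}_\mu(t^\ast v)=(t^\ast)^{2p}\mathfrak{F}_\mu(v)\le\mathfrak{F}_\mu(v)$, so the infimum of $\mathfrak{F}_\mu$ over $\{I_\omega=0\}$ is bounded above by $\mathfrak{F}_\mu(v)$; taking the infimum over all admissible $v$ gives the second equality.

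For the third identity, set $c=\mathfrak{F}_\mu(\varphi_\omega)$. On the constraint $\{\mathfrak{F}_\mu(v)=c\}$ the identity for $S_\omega$ becomes $S_\omega(v)=\tfrac12 A(v)-\tfrac{c}{2p}$, so minimizing $S_\omega$ is the same as minimizing $A(v)$, and the claim reduces to showing $A(v)\ge c$, i.e. $I_\omega(v)\ge 0$, for every such $v$. If instead $I_\omega(v)<0$, the scaling above yields $t^\ast<1$ with $t^\ast v\in\mathcal{N}_\omega$ and $\mathfrak{F}_\mu(t^\ast v)=(t^\ast)^{2p}c<c$, contradicting the fact established in the first step that $c=\mathfrak{F}_\mu(\varphi_\omega)=\inf_{\mathcal{N}_\omega}\mathfrak{F}_\mu$. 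Hence $S_\omega(v)\ge\tfrac{p-1}{2p}c=S_\omega(\varphi_\omega)$, and since $\varphi_\omega$ itself belongs to the constraint set, the infimum is attained and equals $S_\omega(\varphi_\omega)$.

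The step I \emph{expect to require the most care} is the positivity of the quadratic form $A(\cdot)$ on $X\setminus\{0\}$, which underlies the existence of the scaling parameter $t^\ast$ in both arguments. This is where the hypotheses $(V0)$--$(V2)$ and the restriction to the range of $\omega$ for which $\mathcal{M}_\omega\ne\emptyset$ enter: the nonnegativity of $V_1$ together with the absorption of the $L^q+L^\infty$ part $V_2$ into $\|\nabla v\|_{2}^2+\omega\|v\|_{2}^2$ (via the Sobolev embedding $H^1\hookrightarrow L^{2q'}$, valid since $q>3/2$) guarantees $A(v)>0$ once $\omega$ is large enough, consistent with $\omega>\omega_0$ in Remark \ref{ppp}.
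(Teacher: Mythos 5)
Your proposal is correct and follows essentially the same route as the paper: the identity $S_\omega=\tfrac12 I_\omega+\tfrac{p-1}{2p}\mathfrak{F}_\mu$ for the first equality, rescaling $v\mapsto t^\ast v$ onto the Nehari manifold for the second, and the sign of $I_\omega$ on the level set $\{\mathfrak{F}_\mu=\mathfrak{F}_\mu(\varphi_\omega)\}$ for the third. The only difference is that you make explicit the formula for the scaling parameter and the positivity of the quadratic part $A(v)$ (which the paper leaves implicit in its assertion that ``there exists $\lambda_0\in(0,1)$''), a point that is indeed where the hypotheses on $V$ and the restriction $\omega>\omega_0$ enter.
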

\begin{proof}
Since
$$
S_{\omega}(v)=\frac12I_{\omega}(v)+\frac{p-1}{2p}\mathfrak{F}_\mu(v), \ \ v\in X,
$$
we know that
$$
\aligned
\frac{p-1}{2p}\mathfrak{F}_\mu(\varphi_\omega)=S_{\omega}(\varphi_\omega)&=\inf\Big\{S_{\omega}(v);v\in X\setminus\{0\},I_{\omega}(v)=0\Big \}\\
&=\inf\Big\{\frac{p-1}{2p}\mathfrak{F}_\mu(v);v\in X\setminus\{0\},I_{\omega}(v)=0\Big \},
\endaligned
$$
i.e.
$$
\mathfrak{F}_\mu(\varphi_\omega)=\inf\Big\{\mathfrak{F}_\mu(v);v\in X\setminus\{0\},I_{\omega}(v)=0\Big \}.
$$
Let $\Gamma_\omega:=\inf\Big\{\mathfrak{F}_\mu(v); v\in X\setminus\{0\},I_{\omega}(v)\leq 0 \Big\}$, it is obvious that
$$
\Gamma_\omega\leq\mathfrak{F}_\mu(\varphi_\omega).
$$
For any $v\in X\setminus\{0\}$ such that $I_{\omega}(v)<0$, there exits $\lambda_0\in (0,1)$ satisfying  $I_{\omega}(\lambda_0v)=0$. Consequently,
we know that
$$
\mathfrak{F}_\mu(\varphi_\omega)\leq \mathfrak{F}_\mu(\lambda_0v)< \mathfrak{F}_\mu(v),
$$
therefore
$$
\mathfrak{F}_\mu(\varphi_\omega)=\Gamma_\omega.
$$

For any $v$ satisfying  $\mathfrak{F}_\mu(v)=\mathfrak{F}_\mu(\varphi_\omega)$, it is easy to see that
$$
I_{\omega}(v)\geq0,
$$
which implies that
$$
S_{\omega}(v)\geq \frac {p-1}{2p}\mathfrak{F}_\mu(v)=S_{\omega}(\varphi_\omega)=S_{\omega}(\varphi_\omega).
$$
Noticing that $\inf\Big\{S_{\omega}(v);v\in X\setminus\{0\},\mathfrak{F}_\mu(v)=\mathfrak{F}_\mu(\varphi_\omega)\Big\}\leq S_{\omega}(\varphi_\omega) $,  we get the conclusion immediately.
\end{proof}

\begin{lem}\label{ES}
Let $G(x)\in L^q(\mathbb{R}^3)+L^\infty(\mathbb{R}^3)$ for some $q$ such that $q>3/2$ and $q\geq 1$.Then,there exists a constant $C>0$ such that
$$|\int_{\mathbb{R}^3}G(x)|v(x)|^2dx|\leq C\|G\|_{L^q+L^\infty}\|v\|_{H^1}^2,\ \ \ \ v\in H^1(\mathbb{R}^3).$$
\end{lem}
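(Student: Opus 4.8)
The plan is to exploit the definition of the sum-space norm $\|G\|_{L^q+L^\infty} = \inf\{\|G_1\|_{L^q}+\|G_2\|_{L^\infty}\}$, where the infimum runs over all splittings $G=G_1+G_2$ with $G_1\in L^q(\mathbb{R}^3)$ and $G_2\in L^\infty(\mathbb{R}^3)$. First I would fix an arbitrary such decomposition, estimate the two resulting integrals $\int G_1|v|^2\,dx$ and $\int G_2|v|^2\,dx$ separately, and only optimize over the decomposition at the very end.

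For the bounded part the bound $|\int_{\mathbb{R}^3}G_2|v|^2\,dx|\le \|G_2\|_{L^\infty}\|v\|_{L^2}^2\le \|G_2\|_{L^\infty}\|v\|_{H^1}^2$ is immediate. For the $L^q$ part I would apply H\"older's inequality with exponents $q$ and its conjugate $q'=q/(q-1)$, obtaining $|\int_{\mathbb{R}^3}G_1|v|^2\,dx|\le \|G_1\|_{L^q}\,\|\,|v|^2\|_{L^{q'}} = \|G_1\|_{L^q}\,\|v\|_{L^{2q'}}^2$. It then remains to control $\|v\|_{L^{2q'}}$ by $\|v\|_{H^1}$ through the Sobolev embedding $H^1(\mathbb{R}^3)\hookrightarrow L^s(\mathbb{R}^3)$, which is valid precisely for $2\le s\le 6$.

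The crux is therefore to verify that the exponent $s=2q'=2q/(q-1)$ falls in the admissible Sobolev range, and this is exactly where the hypothesis $q>3/2$ enters. Since $q>1$ one always has $2q/(q-1)\ge 2$, while $2q/(q-1)\le 6$ is equivalent to $6\le 4q$, i.e. $q\ge 3/2$; thus $q>3/2$ guarantees $2<2q'<6$ and the embedding applies with a constant depending only on $q$. Combining the two estimates yields $|\int_{\mathbb{R}^3}G|v|^2\,dx|\le C(\|G_1\|_{L^q}+\|G_2\|_{L^\infty})\|v\|_{H^1}^2$ for the fixed splitting. Taking the infimum over all admissible decompositions $G=G_1+G_2$ then replaces the right-hand side by $C\|G\|_{L^q+L^\infty}\|v\|_{H^1}^2$, which is the claim. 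I expect no genuine difficulty beyond this exponent bookkeeping; the only point requiring care is that the final constant $C$ is independent of both $v$ and the chosen splitting, which is ensured by the uniformity of the Sobolev embedding constant.
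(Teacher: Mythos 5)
Your proposal is correct and is precisely the argument the paper intends: the paper's proof is a one-line sketch (``let $G=g_1+g_2$ and use the H\"older inequality and Sobolev inequality''), and you have filled in exactly those details, including the key exponent check that $2q/(q-1)\le 6$ is equivalent to $q\ge 3/2$ and the final infimum over decompositions to recover the $\|G\|_{L^q+L^\infty}$ norm. No differences of substance.
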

\begin{proof}
The proof is very simple,we let $G(x)=g_1(x)+g_2(x)$,$g_1(x)\in L^q(\mathbb{R}^3)$,$g_2(x)\in L^\infty(\mathbb{R}^3)$ and use the H\"older inequality and Sobolev inequality to prove it.
\end{proof}

\section{Instability of standing waves}
\ \ \ \ In the following, for any $\omega>0$ with $\varphi_\omega\in\mathcal{M}_\omega$, we introduce
the re-scaled function
\begin{equation}\label{Res}
\varphi_\omega(x)=\omega^{\frac{5-\mu}{4(p-1)}}\widetilde{\varphi}_\omega(\sqrt{\omega}x).
\end{equation} Then, $\widetilde{\varphi}_\omega(x)$
 is a ground state solution of
\begin{equation}\label{RRE}
-\Delta\varphi+\varphi+\omega^{-1}V(\frac{x}{\sqrt{\omega}})\varphi-
(|x|^{-\mu}\ast |\varphi|^p)|\varphi|^{p-2}\varphi=0.
\end{equation}
Denote by
$$I_{\omega}^{*}(v)=\|\nabla v\|_{2}^2+\|v\|_{2}^2+
\omega^{-1}\int_{\mathbb{R}^3}V(\frac{x}{\sqrt{\omega}})|v(x)|^2dx-
\mathfrak{F}_\mu(v),$$
$$\hspace{-4.8cm}I_{0}(v)=\|\nabla v\|_{2}^2+\|v\|_{2}^2-\mathfrak{F}_\mu(v),$$
and let $\psi_1(x)$  be the ground state solution of
\begin{equation}\label{Con1}
-\Delta \psi+\psi-(|x|^{-\mu}\ast |\psi|^p)|\psi|^{p-2}\psi=0,
\end{equation}
from \cite{MS1}, we know  the regularity,  positivity
and radial symmetry of $\psi_1(x)$ and  it decays asymptotically at infinity.

\subsection{Sufficient conditions for instability}

\begin{lem}\label{Pro}
Let $0<\mu<3$, $2-\mu/3<p<6-\mu$, $\varphi_\omega\in\mathcal{M}_\omega$ for large $\omega$ and assume that conditions $(V0)- (V2)$ hold. Let $\widetilde{\varphi}_\omega(x)$ be the re-scaled function defined by \eqref{Res} and $\psi_1(x)$ be the ground state solution  of  \eqref{Con1}. Then, we have
\begin{itemize}
\item[$(1).$] $ \lim_{\omega\rightarrow\infty}\mathfrak{F}_\mu(\widetilde{\varphi}_\omega )=\mathfrak{F}_\mu(\psi_1);$
\item[$(2).$] $\displaystyle\lim_{\omega\rightarrow\infty}\omega^{-1}\int_{\mathbb{R}^3}V(\frac{x}{\sqrt{\omega}})|\widetilde{\varphi}_\omega(x)|^2dx=0;$
\item[$(3).$] $\lim_{\omega\rightarrow\infty}\|\widetilde{\varphi}_\omega\|_{H^1}^2=\|\psi_1\|_{H^1}^2.$
\end{itemize}
\end{lem}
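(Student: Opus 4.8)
The plan is to reduce everything to the variational characterizations supplied by Lemma \ref{Inf}. Since $\widetilde{\varphi}_\omega$ solves \eqref{RRE}, testing the equation against $\widetilde{\varphi}_\omega$ gives $I_{\omega}^{*}(\widetilde{\varphi}_\omega)=0$, and Lemma \ref{Inf} applied to the rescaled problem yields $\mathfrak{F}_\mu(\widetilde{\varphi}_\omega)=\inf\{\mathfrak{F}_\mu(v);\,v\neq0,\,I_{\omega}^{*}(v)=0\}$; likewise $I_{0}(\psi_1)=0$ and $\mathfrak{F}_\mu(\psi_1)=\inf\{\mathfrak{F}_\mu(v);\,v\neq0,\,I_{0}(v)=0\}=:d_0$, with $\|\psi_1\|_{H^1}^2=\mathfrak{F}_\mu(\psi_1)=d_0$. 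The whole statement then follows once I show $\mathfrak{F}_\mu(\widetilde{\varphi}_\omega)\to d_0$; parts $(1)$, $(2)$, $(3)$ come out together. The decisive preliminary is that the rescaled perturbation is small: writing $V_2=g_1+g_2$ with $g_1\in L^q$, $g_2\in L^\infty$, a change of variables gives $\|\omega^{-1}V_2(\cdot/\sqrt\omega)\|_{L^q+L^\infty}\le \omega^{-1+3/(2q)}\|g_1\|_{L^q}+\omega^{-1}\|g_2\|_{L^\infty}$, which tends to $0$ precisely because $q>3/2$ in (V2). Combined with Lemma \ref{ES} this gives $\omega^{-1}\int V_2(x/\sqrt\omega)|v|^2\,dx=o(1)\|v\|_{H^1}^2$ uniformly. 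I would also record, via Proposition \ref{HSI1} and Sobolev, the bound $\mathfrak{F}_\mu(v)\le C\|v\|_{H^1}^{2p}$, valid because $2\le 6p/(6-\mu)\le6$ under $2-\mu/3<p<6-\mu$.

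For the upper bound I would use $t_\omega\psi_1$ as competitor, with $t_\omega>0$ chosen so that $I_{\omega}^{*}(t_\omega\psi_1)=0$, i.e. $t_\omega^{2p-2}=(\|\psi_1\|_{H^1}^2+\omega^{-1}\int V(x/\sqrt\omega)|\psi_1|^2\,dx)/\mathfrak{F}_\mu(\psi_1)$. Since $\psi_1$ is fixed and decays fast, the $V_2$ piece vanishes as above, while (V1.1) controls the $V_1$ piece through $\omega^{-1}\int V_1(x/\sqrt\omega)|\psi_1|^2\le M\omega^{-1}\|\psi_1\|_2^2+M\omega^{-1-m/2}\int|x|^m|\psi_1|^2\to0$; hence $t_\omega\to1$ and $\mathfrak{F}_\mu(\widetilde{\varphi}_\omega)\le t_\omega^{2p}\mathfrak{F}_\mu(\psi_1)\to d_0$. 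In particular $\mathfrak{F}_\mu(\widetilde{\varphi}_\omega)$ is bounded; feeding this into $I_{\omega}^{*}(\widetilde{\varphi}_\omega)=0$ with $V_1\ge0$ and the smallness of the $V_2$ term gives $\tfrac12\|\widetilde{\varphi}_\omega\|_{H^1}^2\le\mathfrak{F}_\mu(\widetilde{\varphi}_\omega)$ for large $\omega$, so $\{\widetilde{\varphi}_\omega\}$ is bounded in $H^1$ and its own $V_2$ contribution is genuinely $o(1)$. The same two inequalities together with $\mathfrak{F}_\mu(\widetilde{\varphi}_\omega)\le C\|\widetilde{\varphi}_\omega\|_{H^1}^{2p}$ also force $\|\widetilde{\varphi}_\omega\|_{H^1}\ge c_0>0$, hence $\mathfrak{F}_\mu(\widetilde{\varphi}_\omega)\ge c_1>0$ uniformly.

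For the matching lower bound I would rescale the moving family: choose $s_\omega>0$ with $I_0(s_\omega\widetilde{\varphi}_\omega)=0$, so $s_\omega^{2p-2}=\|\widetilde{\varphi}_\omega\|_{H^1}^2/\mathfrak{F}_\mu(\widetilde{\varphi}_\omega)$. From $I_{\omega}^{*}(\widetilde{\varphi}_\omega)=0$ we get $\|\widetilde{\varphi}_\omega\|_{H^1}^2=\mathfrak{F}_\mu(\widetilde{\varphi}_\omega)-\omega^{-1}\int V_1(x/\sqrt\omega)|\widetilde{\varphi}_\omega|^2+o(1)$, and since the $V_1$ term is nonnegative and $\mathfrak{F}_\mu(\widetilde{\varphi}_\omega)\ge c_1>0$, this gives $s_\omega\le1+o(1)$. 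Then $d_0\le\mathfrak{F}_\mu(s_\omega\widetilde{\varphi}_\omega)=s_\omega^{2p}\mathfrak{F}_\mu(\widetilde{\varphi}_\omega)\le(1+o(1))\mathfrak{F}_\mu(\widetilde{\varphi}_\omega)$ forces $\liminf\mathfrak{F}_\mu(\widetilde{\varphi}_\omega)\ge d_0$, and with the upper bound this proves $(1)$. Next, $s_\omega^{2p}\mathfrak{F}_\mu(\widetilde{\varphi}_\omega)\ge d_0$ with $\mathfrak{F}_\mu(\widetilde{\varphi}_\omega)\to d_0$ gives $\liminf s_\omega\ge1$, so $s_\omega\to1$ and therefore $\|\widetilde{\varphi}_\omega\|_{H^1}^2=s_\omega^{2p-2}\mathfrak{F}_\mu(\widetilde{\varphi}_\omega)\to d_0=\|\psi_1\|_{H^1}^2$, which is $(3)$. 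Finally $\omega^{-1}\int V(x/\sqrt\omega)|\widetilde{\varphi}_\omega|^2=\mathfrak{F}_\mu(\widetilde{\varphi}_\omega)-\|\widetilde{\varphi}_\omega\|_{H^1}^2\to d_0-d_0=0$ gives $(2)$.

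I expect the main obstacle to be the potential term for the moving family $\widetilde{\varphi}_\omega$, since one cannot invoke its decay directly as it varies with $\omega$. The resolution is structural: the sign condition $V_1\ge0$ in (V1.1) lets the $V_1$ contribution enter only one-sidedly in the squeeze argument, while the scaling gain $\omega^{-1+3/(2q)}\to0$ from $q>3/2$ disposes of $V_2$; fast decay of the fixed limit profile $\psi_1$ is needed only to build the upper-bound competitor, where it is legitimate.
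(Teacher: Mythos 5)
Your proposal is correct and follows essentially the same route as the paper: both rest on the variational characterizations from Lemma \ref{Inf} (minimality of $\mathfrak{F}_\mu$ over the Nehari-type constraints for $I_0$ and $I_\omega^*$), a two-sided comparison via rescaled competitors ($t_\omega\psi_1$ and $s_\omega\widetilde{\varphi}_\omega$ in your version, fixed $\theta>1$ in the paper's), the decay of $\psi_1$ together with (V1.1) for the fixed competitor, and the sign of $V_1$ plus the $\omega^{-1+3/(2q)}$ scaling gain from Lemma \ref{ES} for the moving family. The only differences are cosmetic (normalizing constants tending to $1$ rather than an arbitrary $\theta>1$, and deriving $(3)$ before $(2)$), so no further comment is needed.
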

\begin{proof}
$(1).$ From Lemma \ref{Inf},  we know
$\psi_1(x)$ is a minimizer of
\begin{equation}\label{C2}
\inf\Big\{\mathfrak{F}_\mu(v);v\in X\setminus\{0\},I_0(v)\leq 0\Big\}.
\end{equation}
Similar to the arguments of  Lemma \ref{Inf}, we may assume that  $\widetilde{\varphi}_\omega(x)$ is a minimizer of
\begin{equation}\label{C1}
\inf\Big\{\mathfrak{F}_\mu(v);v\in X\setminus\{0\},I_{\omega}^*(v)\leq 0 \Big\}.
\end{equation}
Notice that  $I_0(\psi_1)=0$, i.e.
$$\|\nabla\psi_1\|_{2}^2+\|\psi_1\|_{2}^2=\mathfrak{F}_\mu(\psi_1).$$
Then,  for any $\theta>1$, we have
\begin{equation}\label{C22}\theta^{-2}I_{\omega}^*(\theta\psi_1)=-(\theta^{2p-2}-1)
\mathfrak{F}_\mu(\psi_1)
+\omega^{-1}\int_{\mathbb{R}^3}V(\frac{x}{\sqrt{\omega}})|\psi_1(x)|^2dx.
\end{equation}
Using $(V1.1)$ and Lemma \ref{ES}, we know
$$
\aligned
|\omega^{-1}&\int_{\mathbb{R}^3}V(\frac{x}{\sqrt{\omega}})|\psi_1(x)|^2dx|\\&\leq
\omega^{-1}\int_{\mathbb{R}^3}V_1(\frac{x}{\sqrt{\omega}})|\psi_1(x)|^2dx+
|\omega^{-1}\int_{\mathbb{R}^3}V_2(\frac{x}{\sqrt{\omega}})|\psi_1(x)|^2dx|\\
&\leq C\omega^{-1}\int_{\mathbb{R}^3}(1+\omega^{-\frac{m}{2}}|x|^m)|\psi_1(x)|^2dx+
(\omega^{-1}+\omega^{\frac{3}{2q}-1})C\|V_2\|_{L^q+L^\infty}\|\psi_1\|_{H^1}^2.
\endaligned
$$
Since $|x|^m|\psi_1(x)|^2\in L^1(\mathbb{R}^3)$ and $q>3/2$,  using the fact that $\psi_1(x)$ decays exponentially at infinity,  we get
\begin{equation}\label{C3}
\lim_{\omega\rightarrow\infty}\omega^{-1}\int_{\mathbb{R}^3}V(\frac{x}{\sqrt{\omega}})|\psi_1(x)|^2dx=0.
\end{equation}
By \eqref{C3} and \eqref{C22}, it is easy to see
 $$\lim_{\omega\rightarrow\infty}\theta^{-2}I_{\omega}^*(\theta\psi_1)=
\lim_{\omega\rightarrow\infty}-(\theta^{2p-2}-1)\mathfrak{F}_\mu(\psi_1)<0.$$
Namely,  for any $\theta>1$, if  $\omega$ is large enough, we have
$$I_{\omega}^*(\theta\psi_1)<0.
$$

Next, since $I_{\omega}^*(\widetilde{\varphi}_\omega)=0$, i.e. $$\|\nabla\widetilde{\varphi}_\omega\|_{2}^2+\|\widetilde{\varphi}_\omega\|_{2}^2=\mathfrak{F}_\mu(\widetilde{\varphi}_\omega)-
\omega^{-1}\int_{\mathbb{R}^3}V(\frac{x}{\sqrt{\omega}})|\widetilde{\varphi}_\omega(x)|^2dx.$$
Then, for any $\theta>1$, we have
\begin{equation}\label{C4}
\aligned
\theta^{-2}I_0(\theta\widetilde{\varphi}_\omega)&=
-(\theta^{2p-2}-1)\mathfrak{F}_\mu(\widetilde{\varphi}_\omega)
-\omega^{-1}\int_{\mathbb{R}^3}V(\frac{x}{\sqrt{\omega}})|\widetilde{\varphi}_\omega(x)|^2dx\\
&\leq-(\theta^{2p-2}-1)\mathfrak{F}_\mu(\widetilde{\varphi}_\omega)
+\omega^{-1}\int_{\mathbb{R}^3}V_{-}(\frac{x}{\sqrt{\omega}})|\widetilde{\varphi}_\omega(x)|^2dx,
\endaligned
\end{equation}
where $V_{-}(x)=max\{-V(x),\ 0\}$. From the conditions $(V0)-(V2)$, we have $V_{-}\in L^q+L^\infty$ with $q>3/2$ and $q\geq1$. According to Lemma \ref{ES},  there exists $C>0$ and $q>3/2$ such that
\begin{equation}\label{C5}
\omega^{-1}\int_{\mathbb{R}^3}V(\frac{x}{\sqrt{\omega}})|\widetilde{\varphi}_\omega(x)|^2dx\leq
C(\omega^{\frac{3}{2q}-1}+\omega^{-1})\|V_{-}\|_{L^q+L^\infty}\|\widetilde{\varphi}_\omega\|_{H^1}^2.
\end{equation}
Using $I_{\omega}^*(\widetilde{\varphi}_\omega)=0$ again, we have
$$
\aligned
\|\widetilde{\varphi}_\omega\|_{H^1}^2&\leq
\mathfrak{F}_\mu(\widetilde{\varphi}_\omega)+
\omega^{-1}\int_{\mathbb{R}^3}V_{-}(\frac{x}{\sqrt{\omega}})|\widetilde{\varphi}_\omega(x)|^2dx\\
&\leq\mathfrak{F}_\mu(\widetilde{\varphi}_\omega)+
C(\omega^{\frac{3}{2q}-1}+\omega^{-1})\|V_{-}\|_{L^q+L^\infty}\|\widetilde{\varphi}_\omega\|_{H^1}^2,
\endaligned
$$
which implies
\begin{equation}\label{C6}
(1-C(\omega^{\frac{3}{2q}-1}+\omega^{-1})\|V_{-}\|_{L^q+L^\infty})\|\widetilde{\varphi}_\omega\|_{H^1}^2\leq
\mathfrak{F}_\mu(\widetilde{\varphi}_\omega).
\end{equation}
According to \eqref{C5} and \eqref{C6}, we have
\begin{equation}\label{C7}
\omega^{-1}\int_{\mathbb{R}^3}V_{-}(\frac{x}{\sqrt{\omega}})|\widetilde{\varphi}_\omega(x)|^2dx\leq
\frac{C(\omega^{\frac{3}{2q}-1}+\omega^{-1})\|V_{-}\|_{L^q+L^\infty}}{1-C(\omega^{\frac{3}{2q}-1}+\omega^{-1})\|V_{-}\|_{L^q+L^\infty}}
\mathfrak{F}_\mu(\widetilde{\varphi}_\omega).
\end{equation}
Thus, from \eqref{C4} and \eqref{C7}, we have
$$\theta^{-2}I_0(\theta\widetilde{\varphi}_\omega)\leq -(\theta^{2p-2}-1-
\frac{C(\omega^{\frac{3}{2q}-1}+\omega^{-1})\|V_{-}\|_{L^q+L^\infty}}{1-C(\omega^{\frac{3}{2q}-1}+\omega^{-1})\|V_{-}\|_{L^q+L^\infty}})
\mathfrak{F}_\mu(\widetilde{\varphi}_\omega).$$
Namely, for any $\theta>1$, if  $\omega$ is large enough, we have
$$I_0(\theta\widetilde{\varphi}_\omega)<0.$$

As stated above, we have
\begin{center}
$I_{\omega}^*(\theta\psi_1)<0$ and $I_0(\theta\widetilde{\varphi}_\omega)<0$.
\end{center}
By $I_{\omega}^*(\theta\psi_1)<0$ and \eqref{C1}, we have
\begin{equation}\label{C8}
\mathfrak{F}_\mu(\widetilde{\varphi}_\omega)
\leq\theta^{2p}\mathfrak{F}_\mu(\psi_1),
\end{equation}
while, by $I_0(\theta\widetilde{\varphi}_\omega)<0$ and \eqref{C2}, we have
\begin{equation}\label{C9}
\frac{1}{\theta^{2p}}\mathfrak{F}_\mu(\psi_1)
\leq\mathfrak{F}_\mu(\widetilde{\varphi}_\omega).
\end{equation}
Since  $\theta>1$ is arbitrary, from \eqref{C8} and \eqref{C9}, we have
$$\lim_{\omega\rightarrow\infty}\mathfrak{F}_\mu(\widetilde{\varphi}_\omega)
=\mathfrak{F}_\mu(\psi_1).$$

$(2).$ From $I_{\omega}^*(\widetilde{\varphi}_\omega)=0$, we have
\begin{equation}\label{C99}-(\|\nabla\widetilde{\varphi}_\omega\|_{2}^2+\|\widetilde{\varphi}_\omega\|_{2}^2-
\mathfrak{F}_\mu(\widetilde{\varphi}_\omega))=
\omega^{-1}\int_{\mathbb{R}^3}V(\frac{x}{\sqrt{\omega}})|\widetilde{\varphi}_\omega(x)|^2dx.
\end{equation}
Moreover, by $I_0(\theta\widetilde{\varphi}_\omega)<0$ with $\theta=1$ and $(1)$, we have
\begin{equation}\label{C10}
\limsup_{\omega\rightarrow\infty}I_0(\widetilde{\varphi}_\omega)\leq0.
\end{equation}
For $\omega\rightarrow\infty$, there exists $\theta(\omega)>0$ such that $I_0(\theta(\omega)\widetilde{\varphi}_\omega)=0,$
thus,we have
$$\mathfrak{F}_\mu(\psi_1)\leq
\mathfrak{F}_\mu(\theta(\omega)\widetilde{\varphi}_\omega) =\theta(\omega)^{2p}
\mathfrak{F}_\mu(\widetilde{\varphi}_\omega),$$
which together with conclusion $(1)$ implies that
$$\liminf_{\omega\rightarrow\infty}\theta(\omega)^{2p}\geq\liminf_{\omega\rightarrow\infty}
\frac{\mathfrak{F}_\mu(\psi_1)}{\mathfrak{F}_\mu(\widetilde{\varphi}_\omega)}
=1.$$
Using  $I_0(\theta(\omega)\widetilde{\varphi}_\omega)=0$ and conclusion $(1)$ again, we can obtain
\begin{equation}\label{C11}
\liminf_{\omega\rightarrow\infty}I_0(\widetilde{\varphi}_\omega)=\liminf_{\omega\rightarrow\infty}(\theta(\omega)^{2p-2}-1)
\mathfrak{F}_\mu(\widetilde{\varphi}_\omega)
\geq0.
\end{equation}
From \eqref{C10} and \eqref{C11}, we get
$$\lim_{\omega\rightarrow\infty}I_0(\widetilde{\varphi}_\omega)=0,$$
this  implies that
\begin{equation}\label{C12}
\lim_{\omega\rightarrow\infty}\{\|\nabla\widetilde{\varphi}_\omega\|_{2}^2+\|\widetilde{\varphi}_\omega\|_{2}^2-
\mathfrak{F}_\mu(\widetilde{\varphi}_\omega)\}=0.
\end{equation}
Hence, by \eqref{C99}, we get
$$\displaystyle\lim_{\omega\rightarrow\infty}\omega^{-1}\int_{\mathbb{R}^3}V(\frac{x}{\sqrt{\omega}})|\widetilde{\varphi}_\omega(x)|^2dx=0.
$$

$(3).$ From conclusion $(1)$ and $I_0(\psi_1)=0$, we have
$$\lim_{\omega\rightarrow\infty}\mathfrak{F}_\mu(\widetilde{\varphi}_\omega)
=\mathfrak{F}_\mu(\psi_1)=\|\psi_1\|_{H^1}^2.$$
By \eqref{C12} in the proof of $(2)$, we have
$$\lim_{\omega\rightarrow\infty}\|\widetilde{\varphi}_\omega\|_{H^1}^2=
\lim_{\omega\rightarrow\infty}\{\|\nabla\widetilde{\varphi}_\omega\|_{2}^2+\|\widetilde{\varphi}_\omega\|_{2}^2\}=
\lim_{\omega\rightarrow\infty}\mathfrak{F}_\mu(\widetilde{\varphi}_\omega).$$
Hence,
$$\lim_{\omega\rightarrow\infty}\|\widetilde{\varphi}_\omega\|_{H^1}^2=\|\psi_1\|_{H^1}^2.$$
\end{proof}
\begin{Prop}\label{PP}
Let $0<\mu<3$, $2+(2-\mu)/3<p<6-\mu$  and assume that conditions $(V0)- (V2)$ hold.  Then there exists $\omega^*_0>\omega_0$ such that for any $\omega\in(\omega^*_0, \infty)$, $\varphi_\omega\in\mathcal{M}_\omega$,
 $$\partial_{\lambda}^2E(\varphi_{\omega}^\lambda)|_{\lambda=1}<0,$$  where,
$\varphi_{\omega}^\lambda(x)=\lambda^{3/2}\varphi_\omega(\lambda x)$.
\end{Prop}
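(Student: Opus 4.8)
The plan is to compute $E(\varphi_\omega^\lambda)$ explicitly as a function of $\lambda$, differentiate twice at $\lambda=1$, and then show that the dominant nonlocal term is strictly negative while the potential contributions are of lower order for large $\omega$. Since $\varphi_\omega^\lambda(x)=\lambda^{3/2}\varphi_\omega(\lambda x)$ preserves the $L^2$ norm, a change of variables gives $\|\nabla\varphi_\omega^\lambda\|_2^2=\lambda^2\|\nabla\varphi_\omega\|_2^2$, $\mathfrak{F}_\mu(\varphi_\omega^\lambda)=\lambda^{3(p-2)+\mu}\mathfrak{F}_\mu(\varphi_\omega)$, and $\int_{\mathbb{R}^3} V|\varphi_\omega^\lambda|^2\,dx=\int_{\mathbb{R}^3} V(x/\lambda)|\varphi_\omega(x)|^2\,dx$. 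Writing $\beta=3(p-2)+\mu$ and differentiating the potential term twice in $\lambda$ (the $\lambda$-dependence sits inside $V(x/\lambda)$, producing $x\cdot\nabla V$ at first order and the Hessian contraction $\sum_{i,k}x_ix_k\partial_i\partial_k V$ at second order), I would arrive at
\begin{equation*}
\partial_\lambda^2 E(\varphi_\omega^\lambda)|_{\lambda=1}=\|\nabla\varphi_\omega\|_2^2+B_1+\tfrac12 B_2-\tfrac{\beta(\beta-1)}{2p}\mathfrak{F}_\mu(\varphi_\omega),
\end{equation*}
where $B_1=\int_{\mathbb{R}^3}x\cdot\nabla V\,|\varphi_\omega|^2\,dx$ and $B_2=\int_{\mathbb{R}^3}\sum_{i,k}x_ix_k\partial_i\partial_k V\,|\varphi_\omega|^2\,dx$.

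Next I would eliminate the gradient term using the virial relation $P(\varphi_\omega)=0$. This holds because $e^{i\omega t}\varphi_\omega$ solves \eqref{ME}, so $\|x\,u(t)\|_2^2=\|x\varphi_\omega\|_2^2$ is constant in $t$ (note $\varphi_\omega\in\mathcal{M}_\omega$ has $|x|\varphi_\omega\in L^2$ by Remark \ref{ppp}), whence the virial identity of Proposition \ref{CAUP} forces $P(\varphi_\omega)=0$; equivalently, $\partial_\lambda E(\varphi_\omega^\lambda)|_{\lambda=1}=P(\varphi_\omega)$ vanishes since $S_\omega'(\varphi_\omega)=0$ and the scaling fixes $Q$. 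From $P(\varphi_\omega)=0$ one obtains $\|\nabla\varphi_\omega\|_2^2=\tfrac12 B_1+\tfrac{\beta}{2p}\mathfrak{F}_\mu(\varphi_\omega)$, and substituting yields the clean expression
\begin{equation*}
\partial_\lambda^2 E(\varphi_\omega^\lambda)|_{\lambda=1}=\tfrac32 B_1+\tfrac12 B_2-\tfrac{\beta(\beta-2)}{2p}\mathfrak{F}_\mu(\varphi_\omega).
\end{equation*}
The assumption $p>2+(2-\mu)/3$ is exactly the statement $\beta>2$, so $\beta(\beta-2)>0$ and the nonlocal term is strictly negative; it then remains to prove that $B_1$ and $B_2$ are lower order relative to $\mathfrak{F}_\mu(\varphi_\omega)$ as $\omega\to\infty$.

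For this I would pass to the rescaled ground state $\widetilde\varphi_\omega$ of \eqref{Res}--\eqref{RRE}. A change of variables shows that each of $B_1,B_2$ equals $\omega^{\,2\kappa-3/2}\int_{\mathbb{R}^3}G(z/\sqrt{\omega})|\widetilde\varphi_\omega(z)|^2\,dz$, with $\kappa=\frac{5-\mu}{4(p-1)}$ and $G=x\cdot\nabla V$, respectively $G=\sum_{i,k}x_ix_k\partial_i\partial_k V$, while $\mathfrak{F}_\mu(\varphi_\omega)=\omega^{\,2\kappa-1/2}\mathfrak{F}_\mu(\widetilde\varphi_\omega)$, so that
\begin{equation*}
\frac{B_j}{\mathfrak{F}_\mu(\varphi_\omega)}=\frac{\omega^{-1}\int_{\mathbb{R}^3}G(z/\sqrt{\omega})|\widetilde\varphi_\omega(z)|^2\,dz}{\mathfrak{F}_\mu(\widetilde\varphi_\omega)},\qquad j=1,2.
\end{equation*}
Since $\mathfrak{F}_\mu(\widetilde\varphi_\omega)\to\mathfrak{F}_\mu(\psi_1)>0$ by Lemma \ref{Pro}(1), I only need $\omega^{-1}\int_{\mathbb{R}^3}G(z/\sqrt{\omega})|\widetilde\varphi_\omega|^2\,dz\to0$. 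Here I split $G=G_1+G_2$: by $(V1.2)$ (applied with $|\alpha|=1,2$) the $V_1$-part obeys $|G_1|\le C(1+V_1)$, and by $(V2)$ the $V_2$-part satisfies $G_2\in L^q+L^\infty$ with $q>3/2$. The $G_2$ contribution is handled by Lemma \ref{ES} together with the scaling bound $\|G_2(\cdot/\sqrt{\omega})\|_{L^q+L^\infty}\le C(\omega^{3/(2q)}+1)$, giving $C(\omega^{3/(2q)-1}+\omega^{-1})\|\widetilde\varphi_\omega\|_{H^1}^2\to0$ because $q>3/2$ and $\|\widetilde\varphi_\omega\|_{H^1}$ is bounded by Lemma \ref{Pro}(3); the $G_1$ contribution is bounded by $C\omega^{-1}\|\widetilde\varphi_\omega\|_2^2+C\omega^{-1}\int_{\mathbb{R}^3} V_1(z/\sqrt{\omega})|\widetilde\varphi_\omega|^2\,dz$, where the last term equals $\omega^{-1}\int V(\cdot/\sqrt{\omega})|\widetilde\varphi_\omega|^2$ minus the $V_2$ piece already shown to vanish, hence tends to $0$ by Lemma \ref{Pro}(2). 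Thus the normalized second derivative tends to $-\frac{\beta(\beta-2)}{2p}<0$, so there exists $\omega_0^*>\omega_0$ beyond which $\partial_\lambda^2 E(\varphi_\omega^\lambda)|_{\lambda=1}<0$.

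I expect the main obstacle to be the $V_1$-growth error term: controlling $\omega^{-1}\int_{\mathbb{R}^3} V_1(z/\sqrt{\omega})|\widetilde\varphi_\omega|^2\,dz$ cannot be achieved by pointwise decay of $\widetilde\varphi_\omega$ alone, since $\widetilde\varphi_\omega$ itself varies with $\omega$, and it is precisely here that Lemma \ref{Pro}(2)---resting on the minimizing characterization of $\widetilde\varphi_\omega$---is indispensable. The structural hypotheses $(V1.2)$ and $(V2)$ are tailored so that both $x\cdot\nabla V$ and $\sum_{i,k}x_ix_k\partial_i\partial_k V$ decompose into a part dominated by $1+V_1$ and a part in $L^q+L^\infty$, which is exactly what renders these two error terms negligible against $\mathfrak{F}_\mu(\varphi_\omega)$.
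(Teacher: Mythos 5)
Your proposal is correct and follows essentially the same route as the paper: the same scaling computation of $\partial_\lambda^2 E(\varphi_\omega^\lambda)|_{\lambda=1}$, elimination of $\|\nabla\varphi_\omega\|_2^2$ via $P(\varphi_\omega)=0$, reduction to the rescaled ratio $\omega^{-1}\int V^*(z/\sqrt{\omega})|\widetilde\varphi_\omega|^2\,dz/\mathfrak{F}_\mu(\widetilde\varphi_\omega)$, and the same $V_1^*/V_2^*$ splitting handled by $(V1.2)$, $(V2)$, Lemma \ref{ES} and Lemma \ref{Pro}. The only cosmetic difference is that you additionally mention the virial-identity justification of $P(\varphi_\omega)=0$, whereas the paper derives it directly from $S_\omega'(\varphi_\omega)=0$.
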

\begin{proof}
Let $\varphi_{\omega}^\lambda(x)=\lambda^{3/2}\varphi_\omega(\lambda x)$, by  simple calculation, we have
$$E(\varphi_\omega^\lambda)=\frac{\lambda^2}{2}\|\nabla \varphi_\omega\|_{2}^2+\frac{1}{2}\int_{\mathbb{R}^3}V(\frac{x}{\lambda})|\varphi_\omega(x)|^2dx-
\frac{\lambda^{3(p-2)+\mu}}{2p}\mathfrak{F}_\mu({\varphi}_\omega)$$
and
$$\partial_{\lambda}^2E(\varphi_\omega^\lambda)|_{\lambda=1}=\|\nabla \varphi_\omega\|_{2}^2+\frac{1}{2}\int_{\mathbb{R}^3}\{2x\cdot\nabla V(x)+\sum_{i,j=1}^{n}x_i x_j\partial_i \partial_j V(x)\}|\varphi_\omega(x)|^2dx$$
$$\hspace{3.5cm}-\frac{\{3(p-2)+\mu\}\cdot \{3(p-2)+(\mu-1)\}}{2p}\mathfrak{F}_\mu({\varphi}_\omega).$$
Notice that $\varphi_\omega$ is ground state solution of \eqref{SE}, we have
$$P(\varphi_\omega)=\partial_\lambda S_\omega(\varphi_{\omega}^\lambda)|_{\lambda=1}=\langle S_{\omega}^{'}(\varphi_{\omega}^\lambda),\partial_\lambda \varphi_{\omega}^\lambda|_{\lambda=1}\rangle=0.$$
Thus,from the definition of \eqref{P}, we have
$$\|\nabla\varphi_\omega\|_{2}^2=\frac{1}{2}\int_{\mathbb{R}^3}x\cdot\nabla V(x)|v(x)|^2dx+
\frac{3(p-2)+\mu}{2p}\mathfrak{F}_\mu({\varphi}_\omega).$$
Therefore,
$$\partial_{\lambda}^2E(\varphi_\omega^\lambda)|_{\lambda=1}=\frac{1}{2}\int_{\mathbb{R}^3}\{3x\cdot\nabla V(x)+\sum_{i,j=1}^{3}x_i x_j\partial_i \partial_j V(x)\}|\varphi_\omega(x)|^2dx$$
\begin{equation}\label{C13}
\hspace{2.5cm}-\frac{\{3(p-2)+\mu\}\cdot \{3(p-2)+(\mu-2)\}}{2p}\mathfrak{F}_\mu({\varphi}_\omega).
\end{equation}
In the following, we set
$$V^*(x)=3x\cdot\nabla V(x)+\sum_{i,j=1}^3 x_i x_j \partial_i\partial_j V(x)$$
and
$$V_{k}^*=3x\cdot\nabla V_k(x)+\sum_{i,j=1}^{3}x_i x_j\partial_i\partial_j V_k(x),\ \ \ \ k=1, 2, $$
with
\begin{equation}\label{C14}
V^*(x)=V_{1}^*(x)+V_{2}^*(x).
\end{equation}
By Lemma \ref{ES}, $(2)$, $(3)$ of Lemma\ref{Pro}  and condition $(V2)$, we have
\begin{equation}\label{C15}
\omega^{-1}\int_{\mathbb{R}^3}|V_2(\frac{x}{\sqrt{\omega}})|\widetilde{\varphi}_\omega(x)|^2dx\leq
C(\omega^{\frac{3}{2q}-1}+\omega^{-1})\|V_2\|_{L^q+L^\infty}\|\widetilde{\varphi}_\omega\|_{H^1}^2
\end{equation}
and
\begin{equation}\label{C16}
\lim_{\omega\rightarrow\infty}\omega^{-1}\int_{\mathbb{R}^3}V_1(\frac{x}{\sqrt{\omega}})|\widetilde{\varphi}_\omega(x)|^2dx=0.
\end{equation}
Moreover, from the condition $(V1.2)$, we have
$$\omega^{-1}\int_{\mathbb{R}^3}|V_{1}^*(\frac{x}{\sqrt{\omega}})||\widetilde{\varphi}_\omega(x)|^2dx
\leq C\omega^{-1}\int_{\mathbb{R}^3}(1+V_1(\frac{x}{\sqrt{\omega}}))|\widetilde{\varphi}_\omega(x)|^2dx.$$
Thus, from \eqref{C16} and Lemma $3.2$ (3), we have
\begin{equation}\label{C17}
\lim_{\omega\rightarrow\infty}\omega^{-1}\int_{\mathbb{R}^3}|V_{1}^*(\frac{x}{\sqrt{\omega}})||\widetilde{\varphi}_\omega(x)|^2dx=0.
\end{equation}
And the same as \eqref{C15}, we still have
$$\omega^{-1}\int_{\mathbb{R}^3}|V_{2}^*(\frac{x}{\sqrt{\omega}})|\widetilde{\varphi}_\omega(x)|^2dx\leq
C(\omega^{\frac{3}{2q}-1}+\omega^{-1})\|V_{2}^*\|_{L^q+L^\infty}\|\widetilde{\varphi}_\omega\|_{H^1}^2.$$
and
\begin{equation}\label{C18}
\lim_{\omega\rightarrow\infty}\omega^{-1}\int_{\mathbb{R}^3}|V_{2}^*(\frac{x}{\sqrt{\omega}})||\widetilde{\varphi}_\omega(x)|^2dx=0.
\end{equation}
According to \eqref{C14}, \eqref{C17} and \eqref{C18}, we have
\begin{equation}\label{C19}
\lim_{\omega\rightarrow\infty}\omega^{-1}\int_{\mathbb{R}^3}|V^*(\frac{x}{\sqrt{\omega}})||\widetilde{\varphi}_\omega(x)|^2dx=0.
\end{equation}
From  $(1)$ of Lemma \ref{Pro}, \eqref{C19} and the definition of  $\widetilde{\varphi}_\omega(x)$ , we have
\begin{equation}\label{C20}
\displaystyle\lim_{\omega\rightarrow\infty}\frac{\displaystyle\omega^{-1}\int_{\mathbb{R}^3}V^*(\frac{x}{\sqrt{\omega}})
|\widetilde{\varphi}_\omega|^2dx}
{\mathfrak{F}_\mu(\widetilde{\varphi}_\omega)
}=\lim_{\omega\rightarrow\infty}\frac{\displaystyle\int_{\mathbb{R}^3}V^*(x)|\varphi_\omega|^2dx}
{\mathfrak{F}_\mu({\varphi}_\omega)}=0.
\end{equation}
Since $\varphi_\omega(x)=\omega^{\frac{5-\mu}{4(p-1)}}\widetilde{\varphi}_\omega(\sqrt{\omega}x)$,we have $\widetilde{\varphi}_\omega(x)=\omega^{\frac{\mu-5}{4(p-1)}}\varphi_\omega(\frac{x}{\sqrt{\omega}})$.
$$\frac{\displaystyle\omega^{-1}\int_{\mathbb{R}^3}V^*(\frac{x}{\sqrt{\omega}})
|\widetilde{\varphi}_\omega|^2dx}
{\mathfrak{F}_\mu(\widetilde{\varphi}_\omega)
}=\displaystyle\frac{\displaystyle\omega^{\frac{\mu-5}{2(p-1)}}\omega^{-1}\omega^{\frac{3}{2}}
\int_{\mathbb{R}^3}V^*(x)|\varphi_\omega|^2dx}{\omega^{\frac{(\mu-5)p}{2(p-1)}}\omega^3
\omega^{-\frac{\mu}{2}}\mathfrak{F}_\mu(\varphi_\omega)}=\frac{\displaystyle\int_{\mathbb{R}^3}V^*(x)|\varphi_\omega|^2dx}
{\mathfrak{F}_\mu({\varphi}_\omega)}$$
Since $p>2+(2-\mu)/3$, we have
\begin{equation}\label{C21}
\frac{\{3(p-2)+\mu\}\cdot \{3(p-2)+(\mu-2)\}}{2p}>0.
\end{equation}
By \eqref{C20} and \eqref{C21}, we have
\begin{equation}\label{C221}
\frac{\displaystyle\int_{\mathbb{R}^3}V^*(x)|\varphi_\omega|^2dx}
{\mathfrak{F}_\mu({\varphi}_\omega)}
<\frac{\{3(p-2)+\mu\}\cdot \{3(p-2)+(\mu-2)\}}{2p},
\end{equation}
if $\omega$ is large enough.  From  \eqref{C13} and \eqref{C221} we know  there exists $\omega^*_0>\omega_0$ such that for any $\omega\in(\omega^*_0, \infty)$,
$$\partial_{\lambda}^2E(\varphi_{\omega}^\lambda)|_{\lambda=1}<0.$$
\end{proof}

\subsection{Proof of the main result}
In this section, we are going to give the proof of the main result.

For any $\varphi_\omega\in X$ and $\varepsilon>0$, we define
$$U_\varepsilon(\varphi_\omega)\triangleq\{v\in X;\inf_{\theta\in\mathbb{R}}\|v-e^{i\theta}\varphi_\omega\|_{X}<\varepsilon\}.$$
\begin{lem}\label{31}
Let $\varphi_\omega$ be a ground state solution of \eqref{SE}. If $\partial_{\lambda}^2E(\varphi_{\omega}^\lambda)|_{\lambda=1}<0$, then there exists $\varepsilon>0$, $\delta>0$ and mapping $\lambda:U_\varepsilon(\varphi_\omega)\rightarrow(1-\delta, 1+\delta)$ such that
$$I(v^{\lambda(v)})=0\ \ for\ all\ \ v\in U_\varepsilon(\varphi_\omega).$$
\end{lem}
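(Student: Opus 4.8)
The plan is to read the statement as a direct application of the implicit function theorem to the scalar equation obtained by differentiating the energy along the $L^2$-invariant dilation $v\mapsto v^\lambda:=\lambda^{3/2}v(\lambda\cdot)$. First I would record the structural facts about this scaling: it is an $L^2$-isometry and a one-parameter group, $(v^\lambda)^\mu=v^{\lambda\mu}$, so $Q(v^\lambda)=Q(v)$ and $\partial_\lambda E(v^\lambda)|_{\lambda=1}=\partial_\lambda S_\omega(v^\lambda)|_{\lambda=1}=P(v)$ for every $v$, where $I$ denotes the virial functional $P$ of \eqref{P}. Differentiating the explicit expression
\[
E(v^\lambda)=\frac{\lambda^2}{2}\|\nabla v\|_2^2+\frac12\int_{\mathbb{R}^3}V(x/\lambda)|v(x)|^2\,dx-\frac{\lambda^{3(p-2)+\mu}}{2p}\mathfrak{F}_\mu(v)
\]
already computed above, and using the group law, one gets the key identity $\partial_\lambda E(v^\lambda)=\tfrac1\lambda P(v^\lambda)$; in particular $P(v^\lambda)=0\iff\partial_\lambda E(v^\lambda)=0$ for $\lambda>0$. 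Since $\varphi_\omega$ is a ground state, $P(\varphi_\omega)=0$ (established in the proof of Proposition \ref{PP}), and the hypothesis reads $\partial_\lambda^2E(\varphi_\omega^\lambda)|_{\lambda=1}<0$.

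Next I would set $F(v,\lambda):=\partial_\lambda E(v^\lambda)$ and apply the implicit function theorem at the point $(\varphi_\omega,1)$. We have $F(\varphi_\omega,1)=P(\varphi_\omega)=0$ and $\partial_\lambda F(\varphi_\omega,1)=\partial_\lambda^2E(\varphi_\omega^\lambda)|_{\lambda=1}<0\neq0$. Granting that $F\in C^1(X\times(0,\infty);\mathbb{R})$, the theorem yields $\varepsilon_0>0$, $\delta\in(0,1)$ and a $C^1$ map $\lambda$ on the ball $B_{\varepsilon_0}(\varphi_\omega)=\{w\in X:\|w-\varphi_\omega\|_X<\varepsilon_0\}$, taking values in $(1-\delta,1+\delta)$, with $\lambda(\varphi_\omega)=1$ and $F(w,\lambda(w))=0$, this being the unique zero of $F(w,\cdot)$ in that interval. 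Since $\lambda(w)>0$, this gives $P(w^{\lambda(w)})=\lambda(w)F(w,\lambda(w))=0$, i.e. $I(w^{\lambda(w)})=0$ on the ball.

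To upgrade the conclusion from the metric ball to the gauge tube $U_\varepsilon(\varphi_\omega)$ I would exploit phase invariance: $E$, $Q$, $\mathfrak{F}_\mu$ and the dilation all commute with $v\mapsto e^{i\theta}v$, so $F(e^{i\theta}w,\lambda)=F(w,\lambda)$. Taking $\varepsilon\le\varepsilon_0$, for $v\in U_\varepsilon(\varphi_\omega)$ I choose $\theta$ with $\|e^{-i\theta}v-\varphi_\omega\|_X<\varepsilon$ and set $\lambda(v):=\lambda(e^{-i\theta}v)$; by the uniqueness clause of the implicit function theorem this value does not depend on the admissible choice of $\theta$, so $\lambda$ is well defined on $U_\varepsilon(\varphi_\omega)$ and satisfies $P(v^{\lambda(v)})=P((e^{-i\theta}v)^{\lambda(v)})=0$, as required.

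The main obstacle is the verification that
\[
F(v,\lambda)=\lambda\|\nabla v\|_2^2-\frac{1}{2\lambda^2}\int_{\mathbb{R}^3}(x\cdot(\nabla V)(x/\lambda))|v(x)|^2\,dx-\frac{3(p-2)+\mu}{2p}\lambda^{3(p-2)+\mu-1}\mathfrak{F}_\mu(v)
\]
is genuinely $C^1$ jointly in $(v,\lambda)$ near $(\varphi_\omega,1)$. The $v$-dependence of the gradient term is a bounded quadratic form, and that of the Choquard term is $C^1$ by the Hardy--Littlewood--Sobolev inequality (Proposition \ref{HSI1}); the delicate term is the potential integral, where one must differentiate under the integral sign and control $x\cdot\nabla V(x/\lambda)$ uniformly for $\lambda$ near $1$. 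This is exactly what conditions $(V1.1)$--$(V2)$ provide: writing $V=V_1+V_2$, using $(V1.2)$ to dominate $x\cdot\nabla V_1$ by $1+V_1$ and $(V2)$ to place $x\cdot\nabla V_2$ in $L^q+L^\infty$, Lemma \ref{ES} bounds the resulting integrals by $\|v\|_{H^1}^2$ and supplies the continuity and differentiability needed. Once this regularity is in hand, the implicit function theorem applies and the remaining steps are routine.
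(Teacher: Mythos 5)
There is a genuine gap, and it comes from your identification of the functional $I$ in the statement. You read $I$ as the virial functional $P$ of \eqref{P}, but in the paper $I$ is the Nehari functional $I_\omega$ (the one defining $\mathcal{N}_\omega$ in \eqref{A8}). This is forced by how the lemma is used: in Lemma \ref{32} the conclusion $I(v^{\lambda(v)})=0$ is immediately combined with the variational characterization of the ground state to get $S_\omega(v^{\lambda(v)})\geq S_\omega(\varphi_\omega)$, which only makes sense if $v^{\lambda(v)}\in\mathcal{N}_\omega$, i.e.\ $I_\omega(v^{\lambda(v)})=0$. Knowing instead that $P(v^{\lambda(v)})=0$ gives nothing, since $\varphi_\omega$ is not known to minimize $S_\omega$ on the set $\{P=0\}$. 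So even though your argument is internally coherent, it proves a different statement than the one the paper needs.

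The substitution also erases the actual content of the lemma. With $F(v,\lambda)=\partial_\lambda E(v^\lambda)$ the transversality condition $\partial_\lambda F(\varphi_\omega,1)=\partial_\lambda^2E(\varphi_\omega^\lambda)|_{\lambda=1}\neq0$ is literally the hypothesis, so the implicit function theorem applies for free. For the Nehari functional one must instead show $\partial_\lambda I_\omega(\varphi_\omega^\lambda)|_{\lambda=1}=\langle I_\omega'(\varphi_\omega),\eta\rangle\neq0$ with $\eta=\partial_\lambda\varphi_\omega^\lambda|_{\lambda=1}$, and this does not follow directly from the hypothesis. The paper's key step is to combine two facts: since $\varphi_\omega$ minimizes $S_\omega$ on $\mathcal{N}_\omega$, the second variation $\langle S_\omega''(\varphi_\omega)\phi,\phi\rangle$ is nonnegative on directions tangent to $\mathcal{N}_\omega$; on the other hand, using $Q(\varphi_\omega^\lambda)=Q(\varphi_\omega)$ and $S_\omega'(\varphi_\omega)=0$ one computes $\langle S_\omega''(\varphi_\omega)\eta,\eta\rangle=\partial_\lambda^2E(\varphi_\omega^\lambda)|_{\lambda=1}<0$. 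Hence $\eta$ cannot be tangent to $\mathcal{N}_\omega$, which is exactly $\langle I_\omega'(\varphi_\omega),\eta\rangle\neq0$, and only then does the implicit function theorem give the map $\lambda(v)$ with $I_\omega(v^{\lambda(v)})=0$. Your scaling identities, the phase-invariance argument extending from the ball $B_\varepsilon(\varphi_\omega)$ to the tube $U_\varepsilon(\varphi_\omega)$, and the regularity discussion for the potential term would all carry over once this transversality step is inserted, but as written the proposal is missing the one idea the lemma is really about.
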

\begin{proof}
Let $$F(v,\lambda)=I(v^\lambda).$$
Since $\varphi_\omega$ is a minimizer of $S_\omega(v)$ constrained on the manifold $\mathcal{N}_\omega$, then
\begin{equation}\label{D1}
\langle S_{\omega}^{''}(\varphi_\omega)\phi,\phi\rangle\geq0,\ \ for\ \ \langle\varphi_\omega,\phi\rangle=0.
\end{equation}
Next, take $\eta=\partial_\lambda\varphi_{\omega}^\lambda|_{\lambda=1}$, since
$$Q(\varphi_\omega)=Q(\varphi_{\omega}^\lambda)\ \ and\ \ \langle S_{\omega}^{'}(\varphi_\omega),\xi\rangle=0,  \forall \xi\in X,$$
then
\begin{equation}\label{D2}
\langle S_{\omega}^{''}(\varphi_\omega)\eta,\eta\rangle=\partial_{\lambda}^2E(\varphi_{\omega}^\lambda)|_{\lambda=1}<0.
\end{equation}
By \eqref{D1} and \eqref{D2}, we know $\langle\eta,\varphi_\omega\rangle\neq0$ and so
$$\partial_\lambda F(\varphi_\omega,1)=\partial_\lambda I(\varphi_{\omega}^\lambda))|_{\lambda=1}=\langle I^{'}(\varphi_{\omega}),\eta\rangle\neq0.$$
Notice that, $$F_\lambda(\varphi_\omega,1)=I(\varphi_\omega)=0,$$
the implicit function theorem implies the existence of $\varepsilon>0$, $\delta>0$ and a mapping $\lambda:B_\varepsilon(\varphi_\omega)\rightarrow(1-\delta,1+\delta)$ such that
$$I(v^{\lambda(v)})=0\ \ for\ all\ \ v\in B_\varepsilon(\varphi_\omega),$$
the conclusion then follows directly.
\end{proof}
\begin{lem}\label{32}
If $\partial_{\lambda}^2E(\varphi_{\omega}^\lambda)|_{\lambda=1}<0$, where $\varphi_\omega$ is a ground state solution of \eqref{SE}, then there exists $\varepsilon_0>0$ and $\delta_0>0$ such that, for any $v\in U_{\varepsilon_0}(\varphi_\omega)$ satisfying $\|v\|_{2}^2=\|\varphi_\omega\|_{2}^2$, we have
$$E(\varphi_\omega)\leq E(v)+(\lambda(v)-1)P(v),$$
for some $\lambda(v)\in(1-\delta_0,1+\delta_0)$.
\end{lem}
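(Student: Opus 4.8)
The plan is to run a Taylor-expansion/concavity argument built on the fibering map $\lambda\mapsto E(v^\lambda)$, where $v^\lambda(x)=\lambda^{3/2}v(\lambda x)$. Two structural identities drive everything. First, since the scaling $v\mapsto v^\lambda$ preserves the $L^2$-norm, one has $Q(v^\lambda)=Q(v)$ for every $\lambda$, so that $\partial_\lambda S_\omega(v^\lambda)=\partial_\lambda E(v^\lambda)$; evaluating at $\lambda=1$ and comparing with \eqref{P} gives $P(v)=\partial_\lambda E(v^\lambda)|_{\lambda=1}$, which is precisely the quantity appearing in the claim. Second, $\partial_\lambda^2 E(v^\lambda)|_{\lambda=1}$ is the expression computed in the proof of Proposition \ref{PP}, and at $v=\varphi_\omega$, $\lambda=1$ it is strictly negative by hypothesis.

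First I would invoke Lemma \ref{31}: after possibly shrinking $\varepsilon$, it furnishes $\varepsilon,\delta>0$ and a continuous map $\lambda\colon U_\varepsilon(\varphi_\omega)\to(1-\delta,1+\delta)$ with $\lambda(\varphi_\omega)=1$ and $I_\omega(v^{\lambda(v)})=0$, i.e.\ $v^{\lambda(v)}\in\mathcal{N}_\omega$. Since $\varphi_\omega$ minimizes $S_\omega$ over $\mathcal{N}_\omega$, this gives $S_\omega(\varphi_\omega)\le S_\omega(v^{\lambda(v)})$. Writing $S_\omega=E+\omega Q$ and using $Q(v^{\lambda(v)})=Q(v)=Q(\varphi_\omega)$ --- the first equality by scaling invariance of $Q$, the second by the hypothesis $\|v\|_2^2=\|\varphi_\omega\|_2^2$ --- the term $\omega Q$ cancels and I obtain the crucial inequality
$$E(\varphi_\omega)\le E(v^{\lambda(v)}).$$

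It then remains to bound $E(v^{\lambda(v)})$ by $E(v)+(\lambda(v)-1)P(v)$. Setting $g(\lambda)=E(v^\lambda)$, so that $g(1)=E(v)$, $g'(1)=P(v)$ and $g''(\lambda)=\partial_\lambda^2 E(v^\lambda)$, Taylor expansion with Lagrange remainder yields, for some $\xi$ between $1$ and $\lambda(v)$,
$$g(\lambda(v))=E(v)+(\lambda(v)-1)P(v)+\tfrac12(\lambda(v)-1)^2 g''(\xi),$$
so the lemma reduces to showing $g''(\xi)\le0$. The plan is to promote the pointwise condition $\partial_\lambda^2 E(\varphi_\omega^\lambda)|_{\lambda=1}<0$ into uniform negativity $\partial_\lambda^2 E(v^\lambda)<0$ on a product neighborhood $U_{\varepsilon_0}(\varphi_\omega)\times(1-\delta_0,1+\delta_0)$; shrinking $\varepsilon_0$ so that $\lambda(v)$, and hence every intermediate $\xi$, lies in $(1-\delta_0,1+\delta_0)$ then finishes the argument.

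The hard part is the joint continuity of $(v,\lambda)\mapsto\partial_\lambda^2 E(v^\lambda)$ near $(\varphi_\omega,1)$ in the $X$-topology, which is exactly where hypotheses $(V1.1)$--$(V2)$ enter. The nonlocal term $\mathfrak{F}_\mu(v)$ is continuous on $X\hookrightarrow H^1$ by Proposition \ref{HSI1}. The delicate pieces are the potential integrals $\int_{\mathbb{R}^3}W(x/\lambda)|v|^2\,dx$, with $W$ built from $x\cdot\nabla V$ and $\sum_{i,j}x_i x_j\partial_i\partial_j V$. Splitting $V=V_1+V_2$, condition $(V1.2)$ bounds the $V_1$-contributions pointwise by $C(1+V_1)$, while $(V2)$ puts the $V_2$-contributions into $L^q+L^\infty$ so that Lemma \ref{ES} applies; the growth condition $(V1.1)$ combined with a Gronwall-type differential inequality for $\lambda\mapsto\int_{\mathbb{R}^3}V_1(x/\lambda)|v|^2\,dx$ yields the uniform estimate $\int_{\mathbb{R}^3}V_1(x/\lambda)|v|^2\,dx\le C\|v\|_X^2$ for $\lambda$ in a fixed subinterval of $(0,\infty)$, which is what makes these integrals finite and continuous in $(v,\lambda)$. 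Phase invariance of $E$, $P$ and the scaling reduces the analysis to $v$ genuinely close to $\varphi_\omega$, legitimizing work on $U_{\varepsilon_0}(\varphi_\omega)$. Chaining $E(\varphi_\omega)\le E(v^{\lambda(v)})\le E(v)+(\lambda(v)-1)P(v)$ then gives the claim.
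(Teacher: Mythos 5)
Your proposal is correct and follows essentially the same route as the paper's proof: Lemma \ref{31} produces $\lambda(v)$ with $v^{\lambda(v)}\in\mathcal{N}_\omega$, minimality of $\varphi_\omega$ on $\mathcal{N}_\omega$ together with $L^2$-invariance of the scaling gives $E(\varphi_\omega)\le E(v^{\lambda(v)})$, and a Taylor expansion combined with the (continuity-propagated) negativity of $\partial_\lambda^2 E(v^\lambda)$ near $(\varphi_\omega,1)$ gives $E(v^{\lambda(v)})\le E(v)+(\lambda(v)-1)P(v)$. The only difference is that you spell out the verification of the joint continuity of $(v,\lambda)\mapsto\partial_\lambda^2E(v^\lambda)$ via $(V1.1)$--$(V2)$ and Lemma \ref{ES}, which the paper simply asserts.
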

\begin{proof}
Since $\partial_{\lambda}^2E(\varphi_{\omega}^\lambda)|_{\lambda=1}<0$ and  $\partial_{\lambda}^2E(v^\lambda)$ is continuous  in $\lambda$ and $v$, we know that there exists $\varepsilon_0>0$ and $\delta_0>0$ such that $\partial_{\lambda}^2E(v^\lambda)<0$ for any $\lambda\in(1-\delta_0,1+\delta_0)$ and $v\in U_{\varepsilon_0}(\varphi_\omega)$. Notice that $\partial_\lambda E(v^\lambda)|_{\lambda=1}=P(v)$, applying the Taylor expansion for the function $E(v^\lambda)$ at $\lambda=1$,we have
\begin{equation}\label{D3}
E(v^\lambda)\leq E(v)+(\lambda-1)P(v),\ \ \lambda\in(1-\delta_0,1+\delta_0),\ \ v\in U_{\varepsilon_0}(\varphi_\omega).
\end{equation}
By  Lemma \ref{31}, we choose $\varepsilon_0<\varepsilon$ and $\delta_0<\delta$, then there exists $\lambda(v)\in(1-\delta_0,1+\delta_0)$ such that
$$I(v^{\lambda(v)})=0, \ \ \forall  v\in U_{\varepsilon_0}(\varphi_\omega).$$
Therefore,we have $$S_\omega(v^{\lambda(v)})\geq S_\omega(\varphi_\omega).$$
Since $\|v^{\lambda(v)}\|_{2}^2=\|v\|_{2}^2=\|\varphi_\omega\|_{2}^2$, we obtain
\begin{equation}\label{D4}
E(v^{\lambda(v)})=S_\omega(v^{\lambda(v)})-\frac{\omega}{2}\|v^{\lambda(v)}\|_{2}^2\geq S_\omega(\varphi_\omega)-\frac{\omega}{2}\|\varphi_\omega\|_{2}^2=E(\varphi_\omega).
\end{equation}
Thus, from\eqref{D3} and \eqref{D4},  we obtain
$$E(\varphi_\omega)\leq E(v)+(\lambda(v)-1)P(v),\ \ \forall v\in U_{\varepsilon_0}(\varphi_\omega).$$
\end{proof}
Let $\varphi_\omega$ be a ground state solution of \eqref{SE} in Lemma \ref{32}, we introduce
$$\mathcal{K}_\omega\triangleq\{v\in U_{\varepsilon_0}(\varphi_\omega);E(v)<E(\varphi_\omega),\|v\|_{2}^2=\|\varphi_\omega\|_{2}^2,P(v)<0\},$$
and
$$T(u_0)=\sup\{T>0;u(t)\in U_{\varepsilon_0}(\varphi_\omega),0\leq t\leq T\},$$
where $u(t)$ is a solution of \eqref{ME} with $u(0)=u_0$. Then, we have the following Lemma.
\begin{lem}\label{33}
If $\partial_{\lambda}^2E(\varphi_{\omega}^\lambda)|_{\lambda=1}<0$, then for any $u_0\in\mathcal{K}_\omega$, there exists $\delta_2=\delta_2(u_0)>0$ such that $P(u(t))\leq-\delta_2$ for $0\leq t\leq T(u_0)$.
\end{lem}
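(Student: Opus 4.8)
The plan is to show that $\mathcal{K}_\omega$ is invariant under the flow of \eqref{ME} and that $P$ stays bounded away from $0$ on $\mathcal{K}_\omega$. Since $E$ and $\|u(t)\|_2^2$ are conserved along solutions by Proposition \ref{CAUP}, if $u_0\in\mathcal{K}_\omega$ then for every $t$ with $u(t)\in U_{\varepsilon_0}(\varphi_\omega)$ we automatically retain $E(u(t))=E(u_0)<E(\varphi_\omega)$ and $\|u(t)\|_2^2=\|u_0\|_2^2=\|\varphi_\omega\|_2^2$. The only nontrivial ingredient is to control the sign of $P(u(t))$.

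The key step is the following dichotomy for $P$ on the relevant constraint set. I claim that for $v\in U_{\varepsilon_0}(\varphi_\omega)$ with $\|v\|_2^2=\|\varphi_\omega\|_2^2$ and $E(v)<E(\varphi_\omega)$, one cannot have $P(v)=0$. Indeed, Lemma \ref{32} gives $E(\varphi_\omega)\leq E(v)+(\lambda(v)-1)P(v)$ for some $\lambda(v)\in(1-\delta_0,1+\delta_0)$; if $P(v)=0$ this forces $E(\varphi_\omega)\leq E(v)$, contradicting $E(v)<E(\varphi_\omega)$. Hence on the closed set $\{v\in \overline{U_{\varepsilon_0}(\varphi_\omega)}:\|v\|_2^2=\|\varphi_\omega\|_2^2,\ E(v)\le E(\varphi_\omega)-\gamma\}$ for fixed $\gamma>0$, the continuous functional $P$ cannot vanish, and since $P(u_0)<0$ by the definition of $\mathcal{K}_\omega$, a continuity/connectedness argument keeps $P$ strictly negative along the orbit. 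To make this quantitative, set $\gamma:=E(\varphi_\omega)-E(u_0)>0$ and consider
$$
-\delta_2:=\sup\Big\{P(v):\ v\in \overline{U_{\varepsilon_0}(\varphi_\omega)},\ \|v\|_2^2=\|\varphi_\omega\|_2^2,\ E(v)\le E(\varphi_\omega)-\gamma\Big\}.
$$

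It remains to check that this supremum is strictly negative, which is where the main obstacle lies. The set over which the sup is taken is nonempty (it contains $u_0$) and one would like it to be compact so that the sup is attained at some $v_*$; then $P(v_*)\ne 0$ by the dichotomy above, and since $P(v_*)<0$ cannot be ruled out a priori by sign, I must verify the sup is negative rather than zero. The clean way is by contradiction: if the sup were $0$, take a maximizing sequence $v_n$ with $P(v_n)\to 0$; using the bound from Lemma \ref{32}, $E(\varphi_\omega)\le E(v_n)+(\lambda(v_n)-1)P(v_n)\le E(\varphi_\omega)-\gamma+(\lambda(v_n)-1)P(v_n)$, and since $|\lambda(v_n)-1|<\delta_0$ is bounded while $P(v_n)\to 0$, the last term tends to $0$, giving $E(\varphi_\omega)\le E(\varphi_\omega)-\gamma$, a contradiction. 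Therefore $\delta_2>0$. Finally, because $u(0)=u_0$ satisfies $P(u_0)<0$ and the three defining quantities $E$, $\|\cdot\|_2^2$ are conserved, for all $0\le t\le T(u_0)$ the solution $u(t)$ lies in the constraint set above, whence $P(u(t))\le -\delta_2$. The delicate point to handle carefully is compactness of the constraint set in $X$ (needed to attain the sup); I would instead avoid attainment altogether and run the contradiction argument purely sequentially as just described, using only continuity of $E$ and $P$ together with the estimate from Lemma \ref{32}, so that no compactness is required.
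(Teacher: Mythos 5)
Your core mechanism is the same as the paper's: combine the inequality $E(\varphi_\omega)\le E(v)+(\lambda(v)-1)P(v)$ from Lemma \ref{32} with conservation of $E$ and $Q$ to get $(\lambda(u(t))-1)P(u(t))\ge\delta_1:=E(\varphi_\omega)-E(u_0)>0$, hence $P(u(t))\ne 0$, and then use continuity in $t$ together with $P(u_0)<0$ to keep $P(u(t))<0$ on $[0,T(u_0))$. Where you diverge is the final quantitative step. The paper simply observes that $P(u(t))<0$ forces $\lambda(u(t))\in(1-\delta_0,1)$, so $P(u(t))\le\delta_1/(\lambda(u(t))-1)\le-\delta_1/\delta_0$, which gives the explicit constant $\delta_2=\delta_1/\delta_0$ with no supremum, no compactness, and no contradiction argument; your worry about compactness is therefore moot. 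Your route via $-\delta_2:=\sup\{P(v):\dots\}$ has one concrete flaw as written: the set over which you take the supremum is not known to consist only of points with $P<0$. Indeed, Lemma \ref{32} only gives $(\lambda(v)-1)P(v)\ge\gamma$, so a point $v$ of that set with $\lambda(v)>1$ would satisfy $P(v)\ge\gamma/\delta_0>0$; then your supremum is positive, $-\delta_2>0$, and the conclusion $P(u(t))\le-\delta_2$ is vacuous. Your sequential contradiction argument only excludes the value $0$ for the supremum, not positive values. The fix is immediate: take the supremum over the orbit $\{u(t):0\le t<T(u_0)\}$ (or over the points of your constraint set with $P<0$), where your own continuity/connectedness argument guarantees $P<0$; the same sequential argument then shows that supremum is strictly negative, and in fact bounded above by $-\gamma/\delta_0$, recovering the paper's constant. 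With that repair your proof is correct, though the paper's direct algebra is shorter and yields the explicit bound at once.
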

\begin{proof}
Take $u_0\in\mathcal{K}_\omega$ and put $\delta_1=E(\varphi_\omega)-E(u_0)>0$. From Lemma \ref{32} and $E(u(t))=E(u_0)$, we have
$$E(\varphi_\omega)\leq E(u(t))+(\lambda(u(t))-1)P(u(t))=E(u_0)+(\lambda(u(t))-1)P(u(t)),$$
which implies
\begin{equation}\label{D5}
0<\delta_1\leq(\lambda(u(t))-1)P(u(t)),\ \ 0\leq t<T(u_0).
\end{equation}
Thus,$P(u(t))\neq 0$. Since  $u_0\in\mathcal{K}_\omega$  then $P(u_0)<0$. By the continuous of $P(u(t))$ in $t$, we know
\begin{equation}\label{D6}
P(u(t))<0\ \ for\ \ 0\leq t<T(u_0).
\end{equation}
Then $\lambda(u(t))\in(1-\delta_0, 1)$, from \eqref{D5} and \eqref{D6}, we have
$$P(u(t))\leq\frac{\delta_1}{\lambda(u(t))-1}\leq-\frac{\delta_1}{\delta_0},\ \ 0\leq t<T(u_0).$$
Hence, let $\delta_2=\delta_1/\delta_0$, we have $P(u(t))\leq-\delta_2$ for $0\leq t<T(u_0)$.
\end{proof}

\hspace{-6mm}\textbf{Proof of Theorem \ref{Instability}.}
Since $P(\varphi_\omega)=\partial_\lambda S_\omega(\varphi_{\omega}^\lambda)|_{\lambda=1}=\partial_\lambda E(\varphi_{\omega}^\lambda)|_{\lambda=1}=0$ and $\partial_{\lambda}^2E(\varphi_{\omega}^\lambda)|_{\lambda=1}<0$, there exists $\delta>0$ such that
$$E(\varphi_{\omega}^\lambda)<E(\varphi_\omega)\ for\ \ \lambda\in(1,1+\delta_0).$$
On the other hand, since
$$P(\varphi_{\omega}^\lambda)=\lambda\partial_\lambda E(\varphi_{\omega}^\lambda),$$
we have
$$\partial_\lambda P(\varphi_{\omega}^\lambda)|_{\lambda=1}=P(\varphi_\omega)+\partial_{\lambda}^2E(\varphi_{\omega}^\lambda)|_{\lambda=1}=
\partial_{\lambda}^2E(\varphi_{\omega}^\lambda)|_{\lambda=1}<0.$$
Moreover, we have
$$\|\varphi_{\omega}^\lambda\|_{2}^2=\|\varphi_\omega\|_{2}^2\ \ and\ \ \lim_{\lambda\rightarrow 1}\|\varphi_{\omega}^\lambda-\varphi_\omega\|_X=0.$$
Therefore, we have
$\varphi_{\omega}^\lambda\in\mathcal{K}_\omega$ for $\lambda>1$ sufficiently close to $1$.

Since we have $|x|\varphi_{\omega}^\lambda(x)\in L^2(\mathbb{R}^3)$, from Assumption \ref{Ass}, we have
$$\frac{d^2}{dt^2}\|xu_\lambda(t)\|_{2}^2=8P(u_\lambda(t)),\ \ 0\leq t<T(\varphi_{\omega}^\lambda),$$
where $u_\lambda(t)$ is the solution of \eqref{ME} with $u_\lambda(0)=\varphi_{\omega}^\lambda.$

By Lemma \ref{33}, there exists $\delta_\lambda>0$ such that
$$P(u_\lambda(t))\leq-\delta_\lambda,\ \ 0\leq t<T(\varphi_{\omega}^\lambda).$$
Set $g(t)=\|xu_\lambda(t)\|_{2}^2>0$, the Taylor expansion at $t=0$ gives
\begin{equation}\label{D8}
g(t)\leq g(0)+g^{'}(0)t+\frac{t^2}{2}g^{''}(0)\leq g(0)+g^{'}(0)t-\frac{\delta_\lambda}{2}t^2\ \ for\ \ 0\leq t<T(\varphi_{\omega}^\lambda).
\end{equation}
This implies $T(\varphi_{\omega}^\lambda)<\infty$. Otherwise, if $T(\varphi_{\omega}^\lambda)=\infty$, by\eqref{D8}, there exists $T_1$ such that $g(T_1)<0$, this contradicts $g(t)\geq 0$ for any $t\in[0,T(\varphi_{\omega}^\lambda))$.

Combining this with the fact that $|x|\varphi_{\omega}^\lambda=|x|u_\lambda(0) \in L^2(\mathbb{R}^3)$, there exists a $T_0>0$ such that
$$\lim_{t\rightarrow T_{0}^{-}}\|x u_\lambda(t)\|_{2}^2=0.$$
Moreover, we have
$$\int_{\mathbb{R}^3}|u_\lambda|^2\leq C(\int_{\mathbb{R}^3}|x|^2|u_\lambda|^2)^\frac{1}{2}(\int_{\mathbb{R}^3}|\nabla u_\lambda|^2)^\frac{1}{2},$$
and $C$ is independent of $u_\lambda$,by the conservation of mass $\|u_\lambda\|_{2}^2=\|\varphi_{\omega}^\lambda\|_{2}^2=const>0$, we have
$$\lim_{t\rightarrow T_{0}^{-}}\|\nabla u_\lambda\|_{2}^2=+\infty,$$
and$$\|u_\lambda\|_{H^1}^2=\|u_\lambda\|_{2}^2+\|\nabla u_\lambda\|_{2}^2=+\infty.$$

\section{Stability of standing waves}
\ \ \ \ For $2-\mu/3<p<2+(2-\mu)/3$, the uniqueness of the ground state of \eqref{V0} is not know. We consider the stability of standing wave for \eqref{ME} with $\mu=1$ when $p$ is sufficiently closed to $2$. When $\mu=1$ in \eqref{Con1}, the uniqueness of $\psi_1(x)$ was investigated in \cite{U2}. Denoted by
\begin{equation}\label{F}
\aligned
&I_0(\varphi)=\|\nabla\varphi\|_{2}^2+\|\varphi\|_{2}^2-\mathfrak{F}_1(\varphi),\ \ \ \ F_0(\varphi)=\frac{1}{2p}\mathfrak{F}_1(\varphi),\\
&S_0(\varphi)=\frac{1}{2}\|\nabla\varphi\|_{2}^2+\frac{1}{2}\|v\|_{2}^2-\frac{1}{2p}\mathfrak{F}_1(\varphi).
\endaligned
\end{equation}

\subsection{Sufficient conditions for the orbital stability}

\ \ \ \ In the following, we show $\psi_1$ is the positive minimizer of
$$
\aligned
S_1&=\inf\{S_0(\varphi);\varphi\in H^1\backslash\{0\},I_0(\varphi)=0\}\\
   &=\inf\{F_0(\varphi);\varphi\in H^1\backslash\{0\},I_0(\varphi)=0\}.
\endaligned
$$
Next, we need a important result as follows.
\begin{lem}\label{CP}
Assume that $p$ satisfies the assumption of Lemma \ref{RGS} and let $\psi_1$ is the unique positive and radially symmetric solution of
$$
-\Delta\psi+\psi-(|x|^{-1}\ast|\psi|^p)|\psi|^{p-2}\psi=0,
$$
then $\psi_1$ is the minimizer of the following variational problem
\begin{equation}\label{JH}
S_1=\inf\{F_0(\varphi);\varphi\in H^1\backslash\{0\},I_0(\varphi)=0\}.
\end{equation}
Then, there exists a subsequence $\{\varphi_k\}$,
if $I_0(\varphi_k)\rightarrow 0$ and $F_0(\varphi_k)\rightarrow S_1$ as $k\rightarrow\infty$, then there exists a sequence $\{y_k\}\subset\mathbb{R}^3$ satisfying
$$
\lim_{k\rightarrow\infty}\||\varphi_k|(\cdot+y_k)-\psi_1\|_{H^1}=0.
$$
\end{lem}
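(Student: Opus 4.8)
The statement is a compactness (concentration-compactness) result for the constrained minimization problem \eqref{JH}. The plan is to argue that any minimizing sequence $\{\varphi_k\}$ with $I_0(\varphi_k)\to 0$ and $F_0(\varphi_k)\to S_1$ converges, up to translations and after passing to a subsequence, to the unique positive radial minimizer $\psi_1$ in $H^1$. The first step is to establish that $\psi_1$ itself attains the infimum in \eqref{JH}. This follows from Lemma \ref{RGS} together with the variational identities already used in Lemma \ref{Inf}: since $\psi_1$ solves \eqref{UN} with $\mu=1$, we have $I_0(\psi_1)=0$, and the two equivalent characterizations of $S_1$ in terms of $S_0$ and $F_0$ on the constraint $I_0=0$ are immediate from the algebraic relation $S_0(\varphi)=\tfrac12 I_0(\varphi)+\tfrac{p-1}{2p}\mathfrak{F}_1(\varphi)$. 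The uniqueness in Lemma \ref{RGS} guarantees $\psi_1$ is \emph{the} positive radial minimizer.

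Next I would extract a bounded sequence in $H^1$. From $I_0(\varphi_k)\to 0$ we get $\|\nabla\varphi_k\|_2^2+\|\varphi_k\|_2^2=\mathfrak{F}_1(\varphi_k)+o(1)$, and since $F_0(\varphi_k)=\tfrac{1}{2p}\mathfrak{F}_1(\varphi_k)\to S_1>0$, the quantity $\mathfrak{F}_1(\varphi_k)$ is bounded and bounded away from zero; hence $\{\varphi_k\}$ is bounded in $H^1$ and does not vanish in the $\mathfrak{F}_1$ sense. The core of the argument is then an application of the concentration-compactness principle of P.-L. Lions (\cite{PLL1}, \cite{PLL2}) to the sequence $\rho_k=|\varphi_k|^2$. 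The plan is to rule out both vanishing and dichotomy. Vanishing is excluded because, by a lemma of Lions, if $\sup_{y}\int_{B(y,R)}|\varphi_k|^2\to 0$ then $\mathfrak{F}_1(\varphi_k)\to 0$ via the Hardy--Littlewood--Sobolev inequality (Proposition \ref{HSI1}) combined with Sobolev embedding, contradicting $\mathfrak{F}_1(\varphi_k)\not\to 0$. Dichotomy is excluded by the strict subadditivity of the minimization value under splitting of mass, which is where the nonlocal structure must be handled carefully: a standard scaling/Brezis--Lieb argument shows that splitting the $\mathfrak{F}_1$ energy into two pieces strictly increases the constrained value, contradicting minimality. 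Therefore compactness holds: there exist $\{y_k\}\subset\mathbb{R}^3$ such that $|\varphi_k|(\cdot+y_k)$ is tight and converges weakly, then strongly in $H^1$, to some limit $\phi$ with $I_0(\phi)=0$ and $F_0(\phi)=S_1$.

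Finally I would identify the limit. The translated modulus sequence converges strongly in $H^1$ to a minimizer $\phi\geq 0$ of \eqref{JH}; passing through the Euler--Lagrange equation (with vanishing Lagrange multiplier, exactly as in the computation following \eqref{B2} in Section 2) shows $\phi$ solves \eqref{UN} with $\mu=1$ and is a nonnegative radial ground state. By the uniqueness in Lemma \ref{RGS}, $\phi=\psi_1$, which yields $\lim_{k\to\infty}\||\varphi_k|(\cdot+y_k)-\psi_1\|_{H^1}=0$. The main obstacle I anticipate is the dichotomy step: because the interaction term $\mathfrak{F}_1$ is a double integral with the kernel $|x-y|^{-1}$, one cannot split the energy as cleanly as for a local nonlinearity, and one must control the cross terms in the Riesz potential when the mass separates. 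The strict subadditivity inequality $S_1<S_1^{\theta}+S_1^{1-\theta}$ under mass fractions, needed to preclude dichotomy, must be verified using the homogeneity of $\mathfrak{F}_1$ under the scaling $\varphi\mapsto\lambda^{3/2}\varphi(\lambda\cdot)$ together with the constraint $I_0=0$, and this is the delicate computational heart of the proof.
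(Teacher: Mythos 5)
Your overall architecture is sound and the conclusion is reachable along the route you describe, but you have taken a genuinely different path from the paper and, more importantly, you have misidentified the key inequality in the step you yourself single out as the heart of the matter. The paper does not invoke the vanishing/dichotomy trichotomy at all: it uses a profile decomposition, writing $\varphi_k(x)=\sum_{j=1}^{l}\varphi^j(x-x_k^j)+\varphi_k^l(x)$ with asymptotically orthogonal translations and the almost-additivity $\|\varphi_k\|_{H^1}^2=\sum_j\|\varphi^j\|_{H^1}^2+\|\varphi_k^l\|_{H^1}^2+o(1)$, then shows via the identity $\|\varphi\|_{H^1}^2=I_0(\varphi)+2pF_0(\varphi)$ that each nonzero profile satisfies $I_0(\varphi^j)=0$, hence $\|\varphi^j\|_{H^1}^2=2pF_0(\varphi^j)\geq 2pS_1$; since the total $H^1$ mass available is $2pS_1$, exactly one profile survives, and it is a minimizer. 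Kato's inequality, the symmetry results of \cite{MS1}, and the uniqueness of Lemma \ref{RGS} then identify $|\varphi^1|$ with $\psi_1$ up to translation. Your identification step at the end coincides with the paper's.

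The concrete problem with your plan is the dichotomy exclusion. The inequality $S_1<S_1^{\theta}+S_1^{1-\theta}$ \emph{under mass fractions} is the right tool for minimization on an $L^2$-sphere, but \eqref{JH} is a minimization over the Nehari-type set $\{I_0(\varphi)=0\}$; there is no mass parameter and no one-parameter family $S_1^{\theta}$ to be strictly subadditive. If you insist on the Lions framework, the correct way to kill dichotomy here is the same mechanism the paper exploits: after checking that the Riesz cross terms between two bumps separated by distance $d_k\to\infty$ are $O(1/d_k)$, each nontrivial piece can be rescaled by some $\lambda\in(0,1]$ onto $\{I_0=0\}$ (possible because $I_0\leq o(1)$ on each piece), and then contributes at least $S_1$ to $\liminf F_0(\varphi_k)$; two nontrivial pieces would force $S_1\geq 2S_1$, a contradiction since $S_1>0$. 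With that substitution your argument closes; as written, the "delicate computational heart" you propose to verify is an inequality that does not apply to this constraint set.
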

\begin{proof}
Let $\{\varphi_k\}\subset H^1\backslash\{0\}$ be a bounded sequence such that $I_0(\varphi_k)\rightarrow 0$ and $F_0(\varphi_k)\rightarrow S_1$ as $k\rightarrow\infty$. We have the decomposition property for a subsequence $\{\varphi_k\}\subset H^1\backslash\{0\}$ : there exists a sequence $\{\varphi^j\}$ in $H^1$, for any $l\geq1$, we have the following identity
\begin{equation}
\varphi_k(x)=\sum_{j=1}^{l}\varphi^j(x-x_{k}^{j})+\varphi_{k}^{l}(x),
\end{equation}
with $\lim_{k\rightarrow\infty}\|\varphi_{k}^{l}\|_{p}\rightarrow0$ as $l\rightarrow\infty$ and for every $j_1\neq j_2$, $|x_{k}^{j_1}-x_{k}^{j_2}|\rightarrow\infty$ as $k\rightarrow\infty$. Moreover, as $k\rightarrow\infty$, we have
\begin{equation}\label{FJ}
\|\varphi_k\|_{H^1}^2=\sum_{j=1}^{l}\|\varphi^j\|_{H^1}^2+\|\varphi_{k}^l\|_{H^1}^2+o(1).
\end{equation}
By \eqref{F}, we know
\begin{equation}\label{PM}
\|\varphi_k\|_{H^1}^2=I_0(\varphi_k)+2pF_0(\varphi_k).
\end{equation}
From \eqref{FJ},\eqref{PM} and $k\rightarrow\infty$, we have
\begin{equation}\label{DHS}
I_0(\varphi_k)=\sum_{j=1}^{l}I_0(\varphi^j)+2p\sum_{j=1}^{l}F_0(\varphi^j)+\|\varphi_{k}^{l}\|_{H^1}^2-2pF_0(\varphi_k)+o(1).
\end{equation}
Thus we have
\begin{equation}\label{IEQ}
\sum_{j=1}^{l}I_0(\varphi^j)+2p\sum_{j=1}^{l}F_0(\varphi^j)-2pS_1\leq0,
\end{equation}
and
\begin{equation}\label{JXS}
\lim_{k\rightarrow\infty}F_0(\varphi_k)=\sum_{j=1}^{\infty}F_0(\varphi^j).
\end{equation}
According to \eqref{IEQ} and \eqref{JXS}, we derive
\begin{equation}\label{XYL}
\sum_{j=1}^{\infty}I_0(\varphi^j)\leq0.
\end{equation}

We claim that there exists exactly one $j$ such that $\varphi^j$ is nonzero. Suppose this is true, we may assume that $j=1$, then, we have
\begin{equation}\label{ZYF}
\varphi_k(x)=\varphi^1(x-x_{k}^1)+\varphi_{k}^1,
\end{equation}
with $\lim_{k\rightarrow\infty}\|\varphi_{k}^1\|_{p}\rightarrow0$ and $I_0(\varphi^1)\leq0$.
We may derive $I_0(\varphi^1)=0$. In fact, if  $\varphi^{1}\neq0$ and $I_0(\varphi^{1})<0$, then there exists some $\lambda_0\in(0,1)$ such that $I_0(\lambda_0\varphi^{1})=0$. Then, by \eqref{JH}, we have $2pS_1\leq2pF_0(\lambda_0\varphi^{1})=\|\lambda_0\varphi^{1}\|_{H^1}^2<\|\varphi^{1}\|_{H^1}^2$. However, from \eqref{FJ}, we know $\|\varphi^{1}\|_{H^1}^2\leq2pS_1$, this is a contradiction.
From \eqref{FJ}, we know $\lim_{k\rightarrow\infty}\|\varphi_{k}^{1}\|_{H^1}=0$. Therefore,
$$
\varphi_{k}(\cdot+x_{k}^1)\rightarrow\varphi^1,\ \ \hbox{in}\ \ H^1(\mathbb{R}^3),
$$
with $\varphi^1$ being a minimizer of \eqref{JH}. On the other hand, we know $|\varphi^1|$ is also a minimizer of \eqref{JH} by Kato's inequality $|\nabla|\varphi^1||\leq|\nabla\varphi^1|$. From \cite{MS1}, we know $|\varphi^1|$ is radially symmetric up to shifting the origin and a $ H^1(\mathbb{R}^3)$ solution to
\begin{equation}\label{Tem}
-\Delta\psi+\psi-(|x|^{-1}\ast|\psi|^p)|\psi|^{p-2}\psi=0
\end{equation}
with the least energy. Since $\psi_1$  is the unique positive, radial
solution of \eqref{Tem}, we know  there exists $y$ such that
$$
|\varphi^1|(x-y)=\psi_1.
$$
Let $y_k= x_{k}^1-y$, then we know
$$
\lim_{k\rightarrow\infty}\||\varphi_k|(\cdot+y_k)-\psi_1\|_{H^1}=0.
$$

Now we are ready to prove the claim  that there exists exactly one $j$ such that $\varphi^j$ is nonzero.  First repeat the same arguments as above, we can show that for every $j\geq0$,  $I_0(\varphi^j)=0$. Next if there exists two $\varphi^j\neq0$ (denoted by $\varphi^{j_1}$ and $\varphi^{j_2}$). By \eqref{FJ}, we know $\|\varphi^{j_i}\|_{H^1}^2<2pS_1$. But, \eqref{JH} implies that $2pS_1\leq2pF_0(\varphi^{j_i})=\|\varphi^{j_i}\|_{H^1}^2$, this is still a contradiction. Thus the claim is proved.

\end{proof}

\begin{lem}\label{CRS}
 Suppose that $(V0)-(V2)$ are satisfied and  $p$ satisfies the assumption of Lemma \ref{RGS}. Let $\varphi_\omega(x)\in \mathcal{M}_\omega$ and the unique positive radial solution $\psi_1$ of \eqref{V0} with $\omega=1$. Then, for the re-scaled function $\widetilde{\varphi}_\omega(x)$ defined by
$$
\varphi_\omega(x)=\omega^{\frac{1}{p-1}}\widetilde{\varphi}_\omega(\sqrt{\omega}x),
$$ we have
$$
\lim_{\omega\rightarrow\infty}\|\widetilde{\varphi}_\omega-\psi_1\|_{H^1}=0.
$$
\end{lem}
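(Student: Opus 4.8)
The plan is to reduce the statement to a sequential one and then feed the rescaled ground states into the concentration--compactness Lemma \ref{CP}. It suffices to show that every sequence $\omega_k\to\infty$ admits a subsequence along which $\|\widetilde{\varphi}_{\omega_k}-\psi_1\|_{H^1}\to0$; the full limit then follows by a routine contradiction argument. Observe first that for $\mu=1$ the exponent $\tfrac{5-\mu}{4(p-1)}$ appearing in \eqref{Res} equals $\tfrac{1}{p-1}$, so the rescaling used here is exactly the one of Lemma \ref{Pro} and all three of its conclusions are available for $\widetilde{\varphi}_{\omega_k}$ with $\mathfrak{F}_1$ in place of $\mathfrak{F}_\mu$.

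To apply Lemma \ref{CP} I would verify its hypotheses along $\omega_k\to\infty$. Boundedness of $\{\widetilde{\varphi}_{\omega_k}\}$ in $H^1$ is immediate from conclusion $(3)$ of Lemma \ref{Pro}, since $\|\widetilde{\varphi}_{\omega_k}\|_{H^1}^2\to\|\psi_1\|_{H^1}^2$. For $I_0(\widetilde{\varphi}_{\omega_k})\to0$ I would use \eqref{C12} (equivalently, combine $(1)$ and $(3)$), which gives $\|\nabla\widetilde{\varphi}_{\omega_k}\|_2^2+\|\widetilde{\varphi}_{\omega_k}\|_2^2-\mathfrak{F}_1(\widetilde{\varphi}_{\omega_k})\to0$, that is $I_0(\widetilde{\varphi}_{\omega_k})\to0$. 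Finally, conclusion $(1)$ yields $F_0(\widetilde{\varphi}_{\omega_k})=\tfrac{1}{2p}\mathfrak{F}_1(\widetilde{\varphi}_{\omega_k})\to\tfrac{1}{2p}\mathfrak{F}_1(\psi_1)=F_0(\psi_1)$; since $I_0(\psi_1)=0$ and $\psi_1$ is the minimizer of \eqref{JH}, we have $F_0(\psi_1)=S_1$, so $F_0(\widetilde{\varphi}_{\omega_k})\to S_1$.

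With these hypotheses in hand, Lemma \ref{CP} produces points $y_k\in\mathbb{R}^3$ with $\||\widetilde{\varphi}_{\omega_k}|(\cdot+y_k)-\psi_1\|_{H^1}\to0$. Two clean-up steps remain. First, since each element of $\mathcal{M}_\omega$ is positive, $\widetilde{\varphi}_{\omega_k}$ is positive and the modulus may be dropped. Second, I must remove the translation $y_k$, and here the radial symmetry is decisive: every element of $\mathcal{M}_\omega$ lies in $X_G$, so $\widetilde{\varphi}_{\omega_k}$ is radial, and $\psi_1$ is radial as well. Averaging the convergence over the rotation group, by replacing $y_k$ with $R^{-1}y_k$ and invoking radiality of both $\widetilde{\varphi}_{\omega_k}$ and $\psi_1$, forces $\psi_1$ to be almost invariant under the translations $(R^{-1}-I)y_k$ for every rotation $R$; since $\psi_1$ is a fixed nontrivial $L^2$ function this is impossible unless $\{y_k\}$ stays bounded. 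Passing to a further subsequence $y_k\to y_*$ and using continuity of translations on $H^1$ gives $\widetilde{\varphi}_{\omega_k}\to\psi_1(\cdot-y_*)$; but the limit is radial, which forces $y_*=0$ because a nonconstant radial function cannot be radial about a second centre. Hence $\widetilde{\varphi}_{\omega_k}\to\psi_1$ in $H^1$.

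I expect the removal of the translation $y_k$ to be the main obstacle: conclusions $(1)$--$(3)$ of Lemma \ref{Pro} control only norms and the nonlocal quantity $\mathfrak{F}_1$, and Lemma \ref{CP} only yields convergence up to translation and modulus, so the symmetry argument above, together with the uniqueness of $\psi_1$ from Lemma \ref{RGS}, is what upgrades this to genuine $H^1$ convergence of $\widetilde{\varphi}_\omega$ itself. The remaining passages, including the subsequence-to-full-limit reduction, are bookkeeping.
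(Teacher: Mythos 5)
Your proposal is correct and follows essentially the same route as the paper: verify the hypotheses of Lemma \ref{CP} via Lemma \ref{Pro} (the paper re-derives $I_0(\widetilde{\varphi}_\omega)\to 0$ in place rather than citing \eqref{C12}, but the content is identical), apply Lemma \ref{CP} along an arbitrary sequence $\omega_k\to\infty$, drop the modulus by positivity of the ground state, and kill the translations $y_k$ using radiality of both $\widetilde{\varphi}_{\omega_k}$ and $\psi_1$. Your rotation-averaging argument for bounding $y_k$ is a more detailed version of the paper's terse remark that $y_k\to\infty$ would force weak convergence to $0$, contradicting $\psi_1>0$.
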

\begin{proof}
We introduce two functional as
$$
\aligned
&I_{\omega}^{*}(\varphi)=\|\nabla\varphi\|_{2}^{2}+\|\varphi\|_{2}^{2}+\omega^{-1}\int_{\mathbb{R}^3}V(\frac{x}{\sqrt{\omega}})|\varphi|^2-\mathfrak{F}_1(\varphi),\\
&S_{\omega}^{*}(\varphi)=\frac{1}{2}\|\nabla\varphi\|_{2}^{2}+\frac{1}{2}\|\varphi\|_{2}^{2}+\frac{1}{2}\omega^{-1}\int_{\mathbb{R}^3}V(\frac{x}{\sqrt{\omega}})|\varphi|^{2}-\frac{1}{2p}\mathfrak{F}_1(\varphi).
\endaligned
$$
For fixed $\omega>0$, we claim that $\widetilde{\varphi}_\omega$ is a minimizer of the variational problem as follows,
\begin{equation}\label{S1}
\aligned
S^{*}
=\inf\Big\{\frac{p-1}{2p}\mathfrak{F}_1(\varphi);\varphi\in H^1\backslash\{0\},I_{\omega}^{*}(\varphi)\leq0\Big\}.
\endaligned
\end{equation}
In fact, since
$$
S_{\omega}(v)=\frac12I_{\omega}(v)+\frac{p-1}{2p}\mathfrak{F}_1(v), \ \ v\in X,
$$
we know that
$$
\aligned
\frac{p-1}{2p}\mathfrak{F}_1(\varphi_\omega)=S_{\omega}(\varphi_\omega)&=\inf\Big\{S_{\omega}(v);v\in X\setminus\{0\},I_{\omega}(v)=0\Big \}\\
&=\inf\Big\{\frac{p-1}{2p}\mathfrak{F}_1(v);v\in X\setminus\{0\},I_{\omega}(v)=0\Big \},
\endaligned
$$
i.e.
$$
\mathfrak{F}_1(\varphi_\omega)=\inf\Big\{\mathfrak{F}_1(v);v\in X\setminus\{0\},I_{\omega}(v)=0\Big \}.
$$
Let $\Gamma_\omega:=\inf\Big\{\mathfrak{F}_1(v); v\in X\setminus\{0\},I_{\omega}(v)\leq 0 \Big\}$, it is obvious that
$$
\Gamma_\omega\leq\mathfrak{F}_1(\varphi_\omega).
$$
For any $v\in X\setminus\{0\}$ such that $I_{\omega}(v)<0$, there exits $\lambda_0\in (0,1)$ satisfying  $I_{\omega}(\lambda_0v)=0$. Consequently,
we know that
$$
\mathfrak{F}_1(\varphi_\omega)\leq \mathfrak{F}_1(\lambda_0v)< \mathfrak{F}_\mu(v),
$$
therefore
$$
\mathfrak{F}_1(\varphi_\omega)=\Gamma_\omega.
$$
Consequently, by changing variable, we know
$\widetilde{\varphi}_\omega$ minimizes \eqref{S1}.
Similarly, $\psi_1$ is the minimizer of
\begin{equation}\label{S2}
\inf\Big\{\frac{p-1}{2p}\mathfrak{F}_\mu(\varphi);\varphi\in H^1\backslash\{0\},I_0(\varphi)\leq0\Big\}.
\end{equation}
From Lemma \ref{Pro} $(1)$, we have
\begin{equation}\label{equality1}
\lim_{\omega\rightarrow\infty}\mathfrak{F}_1(\widetilde{\varphi}_\omega)
=\mathfrak{F}_1(\psi_1).
\end{equation}

Noting that  $I_{\omega}^{*}(\widetilde{\varphi}_\omega)=0$, we know
\begin{equation}\label{inequality1}
I_0(\widetilde{\varphi}_\omega)=\|\nabla\widetilde{\varphi}_\omega\|_{2}^2+\|\widetilde{\varphi}_\omega\|_{2}^2-\mathfrak{F}_1(\widetilde{\varphi}_\omega)=-\omega^{-1}\int_{\mathbb{R}^3}V(\frac{x}{\sqrt{\omega}})|\widetilde{\varphi}_\omega|^2<0.
\end{equation}
Hence, there exists a $\lambda_{0}(\omega)\in(0,1]$ such that $I_0(\lambda_{0}(\omega)\widetilde{\varphi}_\omega)=0$, then, we have
$$
\mathfrak{F}_1(\psi_1)\leq\lambda_{0}(\omega)^{2p}\mathfrak{F}_1(\widetilde{\varphi}_\omega)\rightarrow\lambda_{0}^{2p}\mathfrak{F}_1(\psi_1),\ \ \hbox{as}\ \ \omega\rightarrow\infty,
$$
for some $\lambda_{0}\in(0,1]$, which implies $\lim_{\omega\rightarrow\infty}\lambda_{0}(\omega)=1$. Thus, $\lim_{\omega\rightarrow\infty}I_{0}(\widetilde{\varphi}_\omega)=0$, we may get from \eqref{inequality1} that
$$
\lim_{\omega\rightarrow\infty}\omega^{-1}\int_{\mathbb{R}^3}V(\frac{x}{\sqrt{\omega}})|\widetilde{\varphi}_\omega|^2=0.
$$
By Lemma \ref{CP} for any sequence $\{\omega_k\}$ with $\omega_k\rightarrow\infty$, there exists a subsequence of $\{\widetilde{\varphi}_{\omega_k}\}$ and a sequence $\{y_k\}\subset\mathbb{R}^3$ such that
\begin{equation}\label{CCP}
\lim_{k\rightarrow\infty}\|\widetilde{\varphi}_{\omega_{k}}(\cdot+y_k)-\psi_1\|_{H^1}=0.
\end{equation}
Since $\widetilde{\varphi}_{\omega_{k}}\in X_G$ and the radial solution $\psi_1\in H^1(\mathbb{R}^3)$, then $y_k=0$ in \eqref{CCP}. Indeed, if $y_k\rightarrow\infty$, then $\widetilde{\varphi}_{\omega_k}\rightharpoonup0$ weakly. But $\psi_1$ is a positive radial function. It is impossible. Thus, we have
$$
\lim_{\omega\rightarrow\infty}\|\widetilde{\varphi}_\omega-\psi_1\|_{H^1}=0.
$$
\end{proof}

Related to the radial solution $\psi_1$, we may define two unbounded self-adjoint operators $L_1$ and $L_2$ from $L^2$ to $L^2$ by
$$
\aligned
&L_{1}=-\Delta+1-(p-1)(|x|^{-1}\ast|\psi_1|^p)|\psi_1|^{p-2}-p(|x|^{-1}\ast(|\psi_1|^{p-1}\cdot))|\psi_1|^{p-1},\\
&L_{2}=-\Delta+1-(|x|^{-1}\ast|\psi_1|^p)|\psi_1|^{p-2},
\endaligned
$$
with
$$
\aligned
&\langle L_1v,v\rangle=\|v\|_{H^1}^2-\mathfrak{R}_{\psi_1,v}(x,y),\\
&\langle L_2v,v\rangle=\|v\|_{H^1}^2-\int_{\mathbb{R}^3\times\mathbb{R}^3}\frac{|\psi_1(x)|^{p-2}|v(x)|^2|\psi_1(y)|^p}{|x-y|}dxdy,
\endaligned
$$
where
\begin{equation}\label{Cross}
\aligned
\mathfrak{R}_{w,v}(x,y)=(p-1)&\int_{\mathbb{R}^3\times\mathbb{R}^3}\frac{|w(x)|^{p-2}|v(x)|^2|w(y)|^p}{|x-y|}dxdy\\
+p&\int_{\mathbb{R}^3\times\mathbb{R}^3}\frac{|w(x)|^{p-2}w(x)|v(x)||w(y)|^{p-2}w(y)|v(y)|}{|x-y|}dxdy.
\endaligned
\end{equation}

We are ready to show some
\begin{lem}\label{V0L}
Let $\eta>0$ be the constant in Lemma \ref{RGS}, there exists $0<\eta'<\eta$ such that for all $p$, $2<p<2+\eta'$.\\
$(1)$ There exists $\delta_{01}>0$ such that
$$
\langle L_1v,v\rangle\geq\delta_{01}\|v\|_{2}^2,\ \ \ \ v\in H^1_G(\mathbb{R}^3,\mathbb{R}),
$$
where $(v,\psi_1)_{L^2}=0$.

\hspace{-0.6cm}$(2)$ There exists $\delta_{02}>0$ such that
$$
\langle L_2v,v\rangle\geq\delta_{02}\|v\|_{2}^2,\ \ \ \ v\in H^1(\mathbb{R}^3,\mathbb{R}),
$$
where $(v,\psi_1)_{L^2}=0$.
\end{lem}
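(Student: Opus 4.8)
The two inequalities are coercivity statements for the self-adjoint operators $L_1,L_2$ obtained by linearizing \eqref{Con1} (with $\mu=1$) at the ground state $\psi_1$, and in both cases the plan is first to describe the spectrum near the bottom and then to remove the non-positive directions using the constraint $(v,\psi_1)_{L^2}=0$. Throughout I would use that $\psi_1>0$ is radial, smooth and decays exponentially (from \cite{MS1}), so that the convolution potentials $(|x|^{-1}\ast|\psi_1|^p)|\psi_1|^{p-2}$ and $(|x|^{-1}\ast|\psi_1|^p)$ are bounded, tend to $0$ at infinity, and are relatively compact perturbations of $-\Delta+1$. Hence $L_1$ and $L_2$ are self-adjoint, bounded below, with essential spectrum $\sigma_{ess}=[1,\infty)$, so that below $1$ the spectrum consists of finitely many eigenvalues of finite multiplicity.

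For part $(2)$ I would argue directly. Testing \eqref{Con1} shows $L_2\psi_1=0$, so $0$ is an eigenvalue of $L_2$ with the strictly positive eigenfunction $\psi_1$; by the Perron--Frobenius property of Schr\"odinger-type operators, $0$ is then the bottom of the spectrum and is simple. Together with $\sigma_{ess}(L_2)=[1,\infty)$ this gives a spectral gap $\delta_{02}:=\inf\big(\sigma(L_2)\setminus\{0\}\big)>0$, and the spectral theorem yields $\langle L_2v,v\rangle\ge \delta_{02}\|v\|_{2}^2$ for every $v\perp\psi_1$ (this part in fact requires no restriction on $p$).

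Part $(1)$ is the heart of the matter and is where $2<p<2+\eta'$ enters. First I would record the spectral structure of $L_1$: since $\psi_1$ is a least-energy solution (it minimizes $F_0$ on $\{I_0=0\}$ by Lemma \ref{CP}), $L_1$ has exactly one negative eigenvalue, which is simple with a radial, sign-definite eigenfunction; moreover translation invariance of \eqref{Con1} gives $L_1\partial_{x_i}\psi_1=0$, and the uniqueness/nondegeneracy available for $p$ close to $2$ via \cite{U2} yields $\ker L_1=\mathrm{span}\{\partial_{x_1}\psi_1,\partial_{x_2}\psi_1,\partial_{x_3}\psi_1\}$. As these are odd, hence non-radial, the restriction $L_1|_{H^1_G}$ has trivial kernel, so on the radial sector $0$ lies in its resolvent set and $L_1$ keeps a single negative direction. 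With this structure, coercivity of $\langle L_1\cdot,\cdot\rangle$ on $\{v\in H^1_G:(v,\psi_1)_{L^2}=0\}$ is equivalent, by the classical constrained-minimization (Weinstein) criterion, to the single scalar inequality $\langle L_1^{-1}\psi_1,\psi_1\rangle<0$, with $L_1^{-1}$ taken on the radial subspace.

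It then remains to verify this sign, for which I would use the scaling family. Writing $\psi_\omega(x)=\omega^{1/(p-1)}\psi_1(\sqrt{\omega}x)$ for the ground state of $-\Delta\psi+\omega\psi=(|x|^{-1}\ast|\psi|^p)|\psi|^{p-2}\psi$ and differentiating this equation in $\omega$ at $\omega=1$ gives $L_1\big(\partial_\omega\psi_\omega|_{\omega=1}\big)=-\psi_1$, whence $\langle L_1^{-1}\psi_1,\psi_1\rangle=-\tfrac12\frac{d}{d\omega}\|\psi_\omega\|_{2}^2\big|_{\omega=1}$. Since $\|\psi_\omega\|_{2}^2=\omega^{\frac{2}{p-1}-\frac32}\|\psi_1\|_{2}^2$, one gets $\frac{d}{d\omega}\|\psi_\omega\|_{2}^2|_{\omega=1}=(\tfrac{2}{p-1}-\tfrac32)\|\psi_1\|_{2}^2>0$ exactly when $p<7/3$; as $\eta'<\eta<1/3$ forces $p<7/3$, we obtain $\langle L_1^{-1}\psi_1,\psi_1\rangle<0$ and hence $\langle L_1v,v\rangle\ge\delta_{01}\|v\|_{2}^2$. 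I expect the main obstacle to be not these (routine) spectral-gap and scaling computations but securing the precise spectral picture of $L_1$ — the nondegeneracy $\ker L_1=\mathrm{span}\{\partial_{x_i}\psi_1\}$ and the simplicity of its unique negative eigenvalue — which is precisely why the uniqueness of $\psi_1$ from Lemma \ref{RGS} and the restriction to $p$ near $2$ are indispensable; once these hold, orthogonality to $\psi_1$ removes the negative direction and the Vakhitov--Kolokolov sign, positive throughout the admissible range, closes the estimate.
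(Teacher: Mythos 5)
Your proposal is correct and rests on the same pillars as the paper's proof. For $L_2$, the Perron--Frobenius/Weyl argument ($\psi_1>0$ is the ground state at eigenvalue $0$, essential spectrum $[1,\infty)$, hence a spectral gap on $\{\psi_1\}^\perp$) is exactly what the paper does. For $L_1$, both arguments use the Morse-index-one property of the Nehari/mountain-pass minimizer, the nondegeneracy $\ker L_1=\mathrm{span}\{\partial_{x_j}\psi_1\}$ from \cite{U2} (the sole reason $p$ must lie near $2$), the scaling derivative $\phi=\partial_\omega\psi_{1,\omega}|_{\omega=1}=\frac{1}{p-1}\psi_1+\frac12x\cdot\nabla\psi_1$ solving $L_1\phi=-\psi_1$, and the sign condition $\langle L_1\phi,\phi\rangle=-\frac{7-3p}{4(p-1)}\|\psi_1\|_2^2<0$, which your computation $\frac{d}{d\omega}\|\psi_\omega\|_2^2|_{\omega=1}>0\iff p<7/3$ reproduces exactly. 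The difference is in the packaging of the final step: you invoke the Vakhitov--Kolokolov/Weinstein criterion as a black box to pass from $\langle L_1^{-1}\psi_1,\psi_1\rangle<0$ to coercivity, whereas the paper proves that implication by hand --- it runs a contradiction argument on a normalized sequence $\|v_k\|_{H^1}=1$, uses the compact embedding $H^1_G\hookrightarrow L^3$ to pass to a weak limit $v_0$ in the nonlocal quadratic forms, shows $\langle L_1v_0,v_0\rangle>0$ for $v_0\neq0$ orthogonal to $\psi_1$ via the decomposition $v_0=\alpha e_1+\xi$, $\phi=\beta e_1+\eta$ and Cauchy--Schwarz in the $L_1$-form (i.e.\ it reproves the criterion), concludes $v_0=0$, and then gets the contradiction $\lim_i\langle L_1v_i,v_i\rangle=1>0$. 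The one point you should make explicit is that the abstract criterion, as usually stated, yields only strict positivity of $\langle L_1v,v\rangle$ on $\{v\perp\psi_1\}\setminus\{0\}$; upgrading this to the quantitative bound $\geq\delta_{01}\|v\|_2^2$ still requires either the min-max/discrete-spectrum observation you sketch (only finitely many eigenvalues below the essential spectrum at $1$) or the compactness argument the paper uses --- a routine but non-vacuous step, and it is precisely the radial restriction (equivalently, the compactness of $H^1_G\hookrightarrow L^3$) that makes it go through.
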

\begin{proof}
$(1)$ By contradiction, suppose that
\begin{equation}\label{AS}
\langle L_1v_k,v_k\rangle\leq0,\ \ \ \ (v_k,\psi_1)_{L^2}=0,\ \ \ \ \|v_k\|_{H^1}=1.
\end{equation}
Since $v_k$ is bounded in $H^1_G(\mathbb{R}^3)$, there exists a subsequence $\{v_i\}$ such that $v_i\rightharpoonup v_0$ weakly in $H^1_G(\mathbb{R}^3)$ and $H^1_G(\mathbb{R}^3)\hookrightarrow L^{3}(\mathbb{R}^3)$ is compact. Thus
$$(\psi_1,v_0)_{L^2}=0$$
and
$$
\aligned
&\lim_{i\rightarrow\infty}\int_{\mathbb{R}^3}(|x|^{-1}\ast|\psi_1|^p)|\psi_1|^{p-2}v_{i}^2=\int_{\mathbb{R}^3}(|x|^{-1}\ast|\psi_1|^p)|\psi_1|^{p-2}v_{0}^2,\\
&\lim_{i\rightarrow\infty}\int_{\mathbb{R}^3}(|x|^{-1}\ast|\psi_1|^{p-1}v_i)|\psi_1|^{p-1}v_i=\int_{\mathbb{R}^3}(|x|^{-1}\ast|\psi_1|^{p-1}v_0)|\psi_1|^{p-1}v_0.
\endaligned
$$
 By lower semi-continuity, we have
\begin{equation}\label{OS}
\langle L_1v_0,v_0\rangle\leq\lim_{i\rightarrow\infty}\langle L_1v_i,v_i\rangle\leq0.
\end{equation}
We can prove that $\langle L_1v_0,v_0\rangle>0$. In fact, since
$$
S_1=\inf\{S_0(\varphi);\varphi\in H^1\backslash\{0\},I_0(\varphi)=0\},
$$
has a mountain pass characterization with  $\psi_1$ is the mountain pass solution. So the Morse index is at most one. Moreover,
$$
\langle L_1\psi_1,\psi_1\rangle=\langle L_2\psi_1,\psi_1\rangle-(p-1)\int_{\mathbb{R}^3}(|x|^{-1}\ast|\psi_1|^p)|\psi_1|^p<0.
$$
Thus, $L_1$ has exactly one negative eigenvalue $\lambda_1$ with corresponding eigenfunction $e_1$. From Theorem 1.3 of \cite{U2}, we know  there exists $0<\eta'<\eta$ such that for all $p$, $2<p<2+\eta'$, the operator $L_1$ is nondegenerate, that is
$$
\Sigma_3=Ker\{L_1\}=span\{\frac{\partial\psi_1}{\partial x_j}\},\ \ j=1,2,3.
$$
Now decomposing $H^1=\Sigma_1\oplus\Sigma_2\oplus\Sigma_3$ with $\Sigma_1=span\{e_1\}$, $\Sigma_2$ is the image of the spectral projection corresponding to the positive part of the spectrum of $L^1$.
For $\omega>0$, set $\psi_{1,\omega}=\omega^{\frac{1}{p-1}}\psi_1(\sqrt{\omega}x)$, then $\psi_{1,\omega}$ satisfies
$$
-\Delta\psi_{1,\omega}+\omega\psi_{1,\omega}-(|x|^{-1}\ast|\psi_{1,\omega}|^p)|\psi_{1,\omega}|^{p-2}\psi_{1,\omega}=0.
$$
Differential the above equation with respect to $\omega$ and take $\omega=1$ to give
$$
L_1\phi=-\Delta\phi+\phi-p(|x|^{-1}\ast(|\psi_1|^{p-1}\phi))|\psi_1|^{p-1}-(p-1)(|x|^{-1}\ast|\psi_1|^{p})|\psi_1|^{p-1}\phi=-\psi_1,
$$
where $\phi=\frac{\partial\psi_{1,\omega}}{\partial\omega}|_{\omega=1}=\frac{1}{p-1}\psi_1+\frac{1}{2}x\cdot\nabla\psi_1$. Since $\psi_1$ is radial, it is easy to check $\phi\in H_G^1$ and $(\frac{\partial\psi_1}{\partial x_j},\phi)_{L^2}=0$. We decompose $v_0$ and $\phi$ as
$$
v_0=\alpha e_1+\xi,\ \ \ \ \phi=\beta e_1+\eta,
$$
where $\alpha,\beta\in\mathbb{R}$ and $\xi,\eta\in \Sigma_2$. If $\alpha=0$, then $\langle L_1v_0,v_0\rangle=\langle L_1\xi,\xi\rangle>0$. Suppose $\alpha\neq 0$, then we have
$$
\langle L_1\phi,\phi\rangle=-(\psi_1,\phi)=-(\psi_1,\frac{1}{p-1}\psi_1+\frac{1}{2}x\cdot\nabla\psi_1)=-\frac{7-3p}{4(p-1)}\|\psi_1\|_{2}^2<0.
$$
Therefore, $\beta\neq0$ and $\langle L_1\eta,\eta\rangle=\beta^2\lambda_1+\langle L_1\phi,\phi\rangle<\beta^{2}\lambda_1$. Furthermore, since $\langle L_1\phi,v_0\rangle=-(\psi_1,v_0)_{L^2}=0=\alpha\beta\lambda_1+\langle L_1\eta,\xi\rangle$. Thus, $\langle L_1\eta,\xi\rangle=-\alpha\beta\lambda_1$. By Schwarz inequality, we have
$$
\langle L_1v_0,v_0\rangle=\alpha^2\lambda_1+(L_1\xi,\xi)\geq\alpha^2\lambda_1+\frac{|\langle L_1\eta,\xi\rangle|^2}{\langle L_1\eta,\eta\rangle}>0,
$$
this together with \eqref{OS} lead to  $v_0=0$.
However,
$$
\aligned
\lim_{i\rightarrow\infty}\langle L_1v_i,v_i\rangle&=\lim_{i\rightarrow\infty}\big(\|v_i\|_{H^1}^2-\mathfrak{R}_{\psi_1,v_i}(x,y)\big)\\
&=1-\mathfrak{R}_{\psi_1,v_0}(x,y)=1,
\endaligned
$$
which contradicts with \eqref{AS}.

$(2)$ Since $\psi_1(x)$ is the unique positive radial solution of \eqref{RRE} with $\mu=1$. We  have
$$
L_2\psi_1=-\Delta\psi_1+\psi_1-(|x|^{-1}\ast|\psi_1|^p)|\psi_1|^{p-2}\psi_1=0,
$$
and $\psi_1>0$ for $x\in\mathbb{R}^3$, $\psi_1$ is the first eigenfunction of $L^2$ corresponding to the eigenvalue $0$. Moreover, by Weyl's theorem, the essential spectrum of $L^2$ are in $[1,\infty)$, since $\psi_1$ tends to zero at infinity. These conclude $(2)$.
\end{proof}

For any $v\in X_G$ with $v_1(x)=Re\hspace{0.5mm}v(x)$ and $v_2(x)=Im\hspace{0.5mm}v(x)$, we introduce two unbounded self-adjoint operators from $L^2$ to $L^2$ defined on domain $D(-\Delta+V)$  by
$$
\aligned
&L_{\omega}^{1}=-\Delta+\omega+V-(p-1)(|x|^{-1}\ast|\varphi_\omega|^p)|\varphi_\omega|^{p-2}-p(|x|^{-1}\ast(|\varphi_\omega|^{p-1}\cdot))|\varphi_\omega|^{p-1},\\
&L_{\omega}^{2}=-\Delta+\omega+V-(|x|^{-1}\ast|\varphi_\omega|^p)|\varphi_\omega|^{p-2}.
\endaligned
$$
Therefore, $S_{\omega}^{''}$ can be expressed by
$$
\langle S_{\omega}^{''}(\varphi_\omega)v,v\rangle=\langle L_{\omega}^{1}v_1,v_1\rangle+\langle L_{\omega}^{2}v_2,v_2\rangle,
$$
and
\begin{equation}\label{EQs}
\aligned
&\langle L_{\omega}^{1}v_1,v_1\rangle=\|v_1\|_{X_\omega}^2-\mathfrak{R}_{\varphi_\omega,v_1}(x,y),\\
&\langle L_{\omega}^{2}v_2,v_2\rangle=\|v_2\|_{X_\omega}^2-
\int_{\mathbb{R}^3\times\mathbb{R}^3}\frac{|\varphi_\omega(x)|^{p-2}|v_2(x)|^2|\varphi_\omega(y)|^p}{|x-y|}dxdy,\\
&Re(\varphi_\omega,v)_{L^2}=(\varphi_\omega,v_1)_{L^2},\ \ \ \ \ \ \ \ \ \ \ \ Re(i\varphi_\omega,v)_{L^2}=(\varphi_\omega,v_2)_{L^2},
\endaligned
\end{equation}
where $\mathfrak{R}_{w,v}(x,y)$ is defined in \eqref{Cross}.

Let $\lambda_G=\inf\{\langle Hv,v\rangle;v\in X_G,\|v\|_{L^2}=1\}$, for $\omega>-\lambda_G$, we define the rescaled norm $\|\cdot\|_{\widetilde{X}_\omega}$ by
\begin{equation}\label{RN}
\|v\|^2_{\widetilde{X}_\omega}=\|v\|_{H^1}^2+\int_{\mathbb{R}^3}\omega^{-1}V(\frac{x}{\sqrt{\omega}})|v(x)|^2dx,\ \ v\in X_G.
\end{equation}
For $v(x)=\omega^{\frac{1}{p-1}}\widetilde{v}(\sqrt{\omega}x)$, we define another two re-scaled operators $\widetilde{L}_{\omega}^{1}$ and $\widetilde{L}_{\omega}^{2}$ from $L^2$ to $L^2$ by
$$
\aligned
&\widetilde{L}_{\omega}^{1}=-\Delta+1+\omega^{-1}V(\frac{x}{\sqrt{\omega}})-(p-1)(|x|^{-1}\ast|\widetilde{\varphi}_\omega|^p)|\widetilde{\varphi}_\omega|^{p-2}-p(|x|^{-1}\ast(|\widetilde{\varphi}_\omega|^{p-1}\cdot))|\widetilde{\varphi}_\omega|^{p-1},\\
&\widetilde{L}_{\omega}^{2}=-\Delta+1+\omega^{-1}V(\frac{x}{\sqrt{\omega}})-(|x|^{-1}\ast|\widetilde{\varphi}_\omega|^p)|\widetilde{\varphi}_\omega|^{p-2},
\endaligned
$$
with bilinear form as
$$
\aligned
&\langle\widetilde{L}_{\omega}^{1}v,v\rangle=\|v\|_{\widetilde{X}_\omega}^2-\mathfrak{R}_{\widetilde{\varphi}_\omega,v}(x,y),\\
&\langle\widetilde{L}_{\omega}^{2}v,v\rangle=\|v\|_{\widetilde{X}_\omega}^2-
\int_{\mathbb{R}^3\times\mathbb{R}^3}\frac{|\widetilde{\varphi}_\omega(x)|^{p-2}|v(x)|^2|\widetilde{\varphi}_\omega(y)|^p}{|x-y|}dxdy.
\endaligned
$$
By simple calculation, one can find that
\begin{equation}\label{RNO}
\aligned
&\|v\|_{X_\omega}^2=\omega^{\frac{5-p}{2(p-1)}}\|\widetilde{v}\|_{\widetilde{X}_\omega}^2,\ \ \ \ (\varphi_\omega,v)_{L^2}=\omega^{\frac{7-3p}{2(p-1)}}(\widetilde{\varphi}_\omega,\widetilde{v})_{L^2},\\
&\langle L_{\omega}^{k}v,v\rangle
=\omega^{\frac{5-p}{2(p-1)}}\langle\widetilde{L}_{\omega}^{k}\widetilde{v},\widetilde{v}\rangle,\ \ k=1,2.
\endaligned
\end{equation}
\begin{lem}\label{VL}
Assume the conditions $(V0)-(V2)$ hold and $\eta>0$ be the constant in Lemma \ref{RGS}, there exists $0<\eta'<\eta$, for $2<p<2+\eta'$ and $\varphi_\omega\in\mathcal{M}_\omega$.\\
$(1)$ There exists $\omega_1>0$ and $\delta_1>0$ such that
$$
\langle L_{\omega}^1v,v\rangle\ \geq\ \delta_1\|v\|_{X_\omega}^2,\ \ \ \ v\in\ X_G(\mathbb{R}^3,\mathbb{R})
$$
for any $\omega\in(\omega_1,\infty)$. Here, $(v,\varphi_\omega)_{L^2}=0$.

\hspace{-0.6cm}$(2)$ There exists $\omega_2>0$ and $\delta_2>0$ such that
$$
\langle L_{\omega}^2v,v\rangle\ \geq\ \delta_2\|v\|_{X_\omega}^2,\ \ \ \ v\in\ X_G(\mathbb{R}^3,\mathbb{R})
$$
for any $\omega\in(\omega_2,\infty)$. Here, $(v,\varphi_\omega)_{L^2}=0$.
\end{lem}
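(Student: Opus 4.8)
The plan is to reduce both assertions to the rescaled operators $\widetilde{L}_\omega^1,\widetilde{L}_\omega^2$ and compare them with the limit operators $L_1,L_2$ of Lemma \ref{V0L}, whose coercivity on the $L^2$-orthogonal complement of $\psi_1$ is already available. By the scaling identities in \eqref{RNO}, writing $v(x)=\omega^{1/(p-1)}\widetilde v(\sqrt\omega x)$, the desired bound $\langle L_\omega^k v,v\rangle\ge\delta_k\|v\|_{X_\omega}^2$ under $(v,\varphi_\omega)_{L^2}=0$ is equivalent to $\langle\widetilde L_\omega^k\widetilde v,\widetilde v\rangle\ge\delta_k\|\widetilde v\|_{\widetilde X_\omega}^2$ under $(\widetilde v,\widetilde\varphi_\omega)_{L^2}=0$: the common factor $\omega^{(5-p)/(2(p-1))}$ cancels between the quadratic form and the norm, and the orthogonality is preserved because the factor $\omega^{(7-3p)/(2(p-1))}$ relating $(\varphi_\omega,v)_{L^2}$ to $(\widetilde\varphi_\omega,\widetilde v)_{L^2}$ is nonzero. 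So it suffices to establish the rescaled coercivity for all large $\omega$.

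I would argue by contradiction, in the spirit of the proof of Lemma \ref{V0L}. If $(1)$ fails, there exist $\omega_k\to\infty$ and $\widetilde v_k\in X_G(\mathbb{R}^3,\mathbb{R})$ with $(\widetilde v_k,\widetilde\varphi_{\omega_k})_{L^2}=0$, $\|\widetilde v_k\|_{\widetilde X_{\omega_k}}=1$, and $\langle\widetilde L_{\omega_k}^1\widetilde v_k,\widetilde v_k\rangle\to 0$. From the definition \eqref{RN} of $\|\cdot\|_{\widetilde X_\omega}$, the positivity $V_1\ge 0$ in $(V1.1)$, and Lemma \ref{ES} applied to the $V_2$-part (whose prefactor $\omega_k^{3/(2q)-1}+\omega_k^{-1}\to 0$ since $q>3/2$), I deduce that $\|\widetilde v_k\|_{H^1}$ stays bounded. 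Using the compact embedding of $H^1_G$ into $L^3$ for radial functions, I may then extract $\widetilde v_k\rightharpoonup v_0$ weakly in $H^1_G$ and strongly in $L^3$. Since $\widetilde\varphi_{\omega_k}\to\psi_1$ strongly in $H^1$ by Lemma \ref{CRS}, the orthogonality passes to the limit and gives $(v_0,\psi_1)_{L^2}=0$.

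The decisive step is to pass to the limit in $\langle\widetilde L_{\omega_k}^1\widetilde v_k,\widetilde v_k\rangle=\|\widetilde v_k\|_{\widetilde X_{\omega_k}}^2-\mathfrak{R}_{\widetilde\varphi_{\omega_k},\widetilde v_k}(x,y)$. I would discard the nonnegative $V_1$-contribution and absorb the $V_2$-contribution as $o(1)$, so that $\|\widetilde v_k\|_{\widetilde X_{\omega_k}}^2\ge\|\widetilde v_k\|_{H^1}^2+o(1)$, and combine weak lower semicontinuity of the $H^1$-norm with the convergence $\mathfrak{R}_{\widetilde\varphi_{\omega_k},\widetilde v_k}\to\mathfrak{R}_{\psi_1,v_0}$ to obtain $\liminf_k\langle\widetilde L_{\omega_k}^1\widetilde v_k,\widetilde v_k\rangle\ge\|v_0\|_{H^1}^2-\mathfrak{R}_{\psi_1,v_0}=\langle L_1v_0,v_0\rangle$. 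If $v_0\ne 0$, Lemma \ref{V0L}$(1)$ forces $\langle L_1v_0,v_0\rangle\ge\delta_{01}\|v_0\|_2^2>0$, contradicting the assumed limit $0$; if $v_0=0$, then $\mathfrak{R}_{\widetilde\varphi_{\omega_k},\widetilde v_k}\to 0$ and $\langle\widetilde L_{\omega_k}^1\widetilde v_k,\widetilde v_k\rangle=1-o(1)\to 1>0$, again a contradiction. The \textbf{main obstacle} is precisely the convergence $\mathfrak{R}_{\widetilde\varphi_{\omega_k},\widetilde v_k}\to\mathfrak{R}_{\psi_1,v_0}$ for the two nonlocal bilinear terms in \eqref{Cross}: this requires the Hardy–Littlewood–Sobolev inequality (Proposition \ref{HSI1}) together with the strong $H^1$-convergence $\widetilde\varphi_{\omega_k}\to\psi_1$, the strong $L^3$-convergence $\widetilde v_k\to v_0$, and the spatial localization (exponential decay) of $\psi_1$, to split the convolutions and control the error terms.

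For $(2)$ the argument is identical, with $\mathfrak{R}$ replaced by the single nonlocal quadratic term $\int_{\mathbb{R}^3\times\mathbb{R}^3}|\widetilde\varphi_\omega(x)|^{p-2}|v(x)|^2|\widetilde\varphi_\omega(y)|^p|x-y|^{-1}\,dx\,dy$ and Lemma \ref{V0L}$(1)$ replaced by Lemma \ref{V0L}$(2)$; the compactness, the potential estimates from Lemma \ref{ES} and $(V1.1)$, and the passage to the limit are unchanged. Taking $\omega_1,\omega_2$ larger than the thresholds produced by these two contradiction arguments and $\delta_1,\delta_2$ the corresponding coercivity constants yields the claim.
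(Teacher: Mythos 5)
Your proposal follows essentially the same route as the paper: rescale via \eqref{RNO} to the operators $\widetilde{L}_\omega^k$, argue by contradiction with a normalized sequence $\|v_k\|_{\widetilde{X}_{\omega_k}}=1$, use $V_1\ge 0$ and Lemma \ref{ES} to get $H^1$-boundedness, pass to the limit using Lemma \ref{CRS}, the compact radial embedding, and the convergence of the nonlocal forms $\mathfrak{R}_{\widetilde{\varphi}_{\omega_k},v_k}\to\mathfrak{R}_{\psi_1,v_0}$, then invoke Lemma \ref{V0L} to force $v_0=0$ and derive the final contradiction from $\|v_k\|_{\widetilde{X}_{\omega_k}}^2=1$. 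The only cosmetic difference is that you split the endgame into the two cases $v_0\neq 0$ and $v_0=0$, whereas the paper first deduces $v_0=0$ and then contradicts it; the substance is identical.
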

\begin{proof}
$(1)$  Arguing by contradiction. If  $(1)$  is not true,  from \eqref{RNO}, there exists $\omega_k\rightarrow\infty$ and ${v_k}$ satisfying $\|v_k\|_{\widetilde{X}_{\omega_k}}^2=1$ and $(v_k,\widetilde{\varphi}_{\omega_k})_{L^2}=0$ such that
\begin{equation}\label{FC}
\lim_{k\rightarrow\infty}\langle \widetilde{L}_{\omega_k}^1 v_k,v_k\rangle\leq 0.
\end{equation}

 Since $V_1(x)\geq 0$, from Lemma \ref{ES} we know
 $$
|\int_{\mathbb{R}^3}\omega_{k}^{-1}V_2(\frac{x}{\sqrt{\omega_k}})|v_k(x)|^2dx|\leq C(\omega_{k}^{\frac{3}{2q}-1}+\omega_{k}^{-1})\|V_2\|_{L^q+L^\infty}\|v_k\|_{H^1}^2.
$$
By \eqref{RN}, we get
\begin{equation}\label{CR}
\|v_k\|_{H^1}^2-C(\omega_{k}^{\frac{3}{2q}-1}+\omega_{k}^{-1})\|V_2\|_{L^q+L^\infty}\|v_k\|_{H^1}^2\leq\|v_k\|_{\widetilde{X}_{\omega_k}}^2=1,
\end{equation}
thus $\{v_k\}$ is bounded in $H^1(\mathbb{R}^3)$, if $\omega_k$ is large enough.
Moreover,
\begin{equation}\label{V20}
\lim_{k\rightarrow\infty}\int_{\mathbb{R}^3}\omega_{k}^{-1}V_2(\frac{x}{\sqrt{\omega_k}})|v_k(x)|^2dx=0.
\end{equation}
Let  $\{v_i\}$ be a subsequence of $\{v_k\}$ such that $v_i\rightharpoonup v_0$ in $H^1(\mathbb{R}^3)$ and $\{\widetilde{\varphi}_{\omega_i}\}$ be of $\{\widetilde{\varphi}_{\omega_k}\}$ such that  $\widetilde{\varphi}_{\omega_i}\rightarrow \psi_1$ in $H^1(\mathbb{R}^3)$, which is due to  Lemma \ref{CRS}. According to Lemma \ref{ES} and H\"older inequality, since $p$ is sufficiently close to $2$, we also have $|v_i|^2\rightharpoonup|v_0|^2$ in $L^{3/2}(\mathbb{R}^3)$ and $(|x|^{-1}\ast|\widetilde{\varphi}_{\omega_i}|^p)|\widetilde{\varphi}_{\omega_i}|^{p-2}\rightarrow (|x|^{-1}\ast|\psi_1|^p)|\psi_1|^{p-2}$ in $L^3(\mathbb{R}^3)$ and consequently,
\begin{equation}\label{DC1}
\lim_{i\rightarrow\infty}\int_{\mathbb{R}^3}(|x|^{-1}\ast|\widetilde{\varphi}_{\omega_i}|^p)|\widetilde{\varphi}_{\omega_i}|^{p-2}|v_i|^2dx
=\int_{\mathbb{R}^3}(|x|^{-1}\ast|\psi_1|^p)|\psi_1|^{p-2}|v_0|^2dx.
\end{equation}
By the analogous analysis, we also have
\begin{equation}\label{DC2}
\lim_{i\rightarrow\infty}\int_{\mathbb{R}^3}(|x|^{-1}\ast|\widetilde{\varphi}_{\omega_i}|^{p-2}\widetilde{\varphi}_{\omega_i}|v_i|)|\widetilde{\varphi}_{\omega_i}|^{p-2}\widetilde{\varphi}_{\omega_i}|v_i|dx
=\int_{\mathbb{R}^3}(|x|^{-1}\ast|\psi_1|^{p-2}\psi_1|v_0|)|\psi_1|^{p-2}\psi_1|v_0|dx.
\end{equation}
By \eqref{FC},\eqref{V20},\eqref{DC1},\eqref{DC2} and $V_1(x)\geq 0$, we have
$$
\aligned
0&\geq\liminf_{i\rightarrow\infty}\langle \widetilde{L}_{\omega_i}^1v_i,v_i\rangle\\
&=\liminf_{i\rightarrow\infty}\Big(\|v_i\|_{H^1}^2+\int_{\mathbb{R}^3}\omega_{i}^{-1}V(\frac{x}{\sqrt{\omega_i}})|v_i(x)|^2dx-\mathfrak{R}_{\widetilde{\varphi}_{\omega_i},v_i}\Big)\\
&\geq\|v_0\|_{H^1}^2-\mathfrak{R}_{\psi_1,v_0}=\langle L_1v_0,v_0\rangle.
\endaligned
$$
Since $(v_i,\widetilde{\varphi}_{\omega_i})_{L^2}=0$, we have $(v_0,\psi_1)_{L^2}=0$. While, according to Lemma \ref{V0L} $(1)$, we know $\langle L_1v_0,v_0\rangle>0$, then we can conclude that $v_0=0$.

However,  by \eqref{FC},\eqref{DC1} and \eqref{DC2}, we have
$$
\aligned
0&\geq\liminf_{i\rightarrow\infty}\langle \widetilde{L}_{\omega_i}^1v_i,v_i\rangle\\
&=\liminf_{i\rightarrow\infty}\Big(\|v_i\|_{\widetilde{X}_{\omega_i}}^2-\mathfrak{R}_{\widetilde{\varphi}_{\omega_i},v_i}\Big)\\
&=1-\mathfrak{R}_{\psi_1,v_0},
\endaligned
$$
which means $\mathfrak{R}_{\psi_1,v_0}\geq 1$, this contradicts with the conclusion we just proved  that  $v_0=0$. Hence, $(1)$ is concluded.

Repeat the same arguments, we can prove $(2)$.
\end{proof}

To show the stability of the standing wave solutions, we need a sufficient condition as follows.
\begin{Prop}\label{SCD1}
Assume the conditions $(V0)-(V2)$ hold and $\eta>0$ be the constant in Lemma \ref{RGS}, there exists $0<\eta'<\eta$,  for $2<p<2+\eta'$ and $\varphi_\omega\in\mathcal{M}_\omega$, $\omega\in(\omega_{0}^{*},\infty)$ where $\omega_{0}^{*}$  be the number obtained in Lemma \eqref{SCD1}. There exists $\delta^{'}>0$  such that
$$
\langle S_{\omega}^{''}(\varphi_\omega)v,v\rangle\geq\delta^{'}\|v\|_{X}^2
$$
for any $v\in X_G$ satisfying $Re(\varphi_\omega,v)_{L^2}=0$ and $Re(i\varphi_\omega,v)_{L^2}=0$.
\end{Prop}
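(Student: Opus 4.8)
The plan is to reduce the statement to the two scalar coercivity estimates of Lemma \ref{VL} by decomposing $v$ into its real and imaginary parts, and then to upgrade the resulting coercivity in the $\omega$-dependent norm $\|\cdot\|_{X_\omega}$ to coercivity in $\|\cdot\|_X$. First I would write $v=v_1+iv_2$ with $v_1=\mathrm{Re}\,v$ and $v_2=\mathrm{Im}\,v$; since $\varphi_\omega$ is real and $v\in X_G$, both $v_1$ and $v_2$ belong to $X_G(\mathbb{R}^3,\mathbb{R})$. By the splitting of $S_\omega''$ recorded in \eqref{EQs},
$$\langle S_\omega''(\varphi_\omega)v,v\rangle=\langle L_\omega^1 v_1,v_1\rangle+\langle L_\omega^2 v_2,v_2\rangle,$$
and the last line of \eqref{EQs} shows that the two hypotheses translate exactly into $(\varphi_\omega,v_1)_{L^2}=\mathrm{Re}(\varphi_\omega,v)_{L^2}=0$ and $(\varphi_\omega,v_2)_{L^2}=\mathrm{Re}(i\varphi_\omega,v)_{L^2}=0$.

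Thus $v_1$ and $v_2$ are admissible test functions in parts $(1)$ and $(2)$ of Lemma \ref{VL} respectively, so for $\omega>\max\{\omega_1,\omega_2\}$ one has $\langle L_\omega^1 v_1,v_1\rangle\geq\delta_1\|v_1\|_{X_\omega}^2$ and $\langle L_\omega^2 v_2,v_2\rangle\geq\delta_2\|v_2\|_{X_\omega}^2$. Because $|v|^2=|v_1|^2+|v_2|^2$ and $|\nabla v|^2=|\nabla v_1|^2+|\nabla v_2|^2$ pointwise, the two norms add up to $\|v\|_{X_\omega}^2$, so summing the inequalities gives
$$\langle S_\omega''(\varphi_\omega)v,v\rangle\geq\min\{\delta_1,\delta_2\}\,\|v\|_{X_\omega}^2.$$

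The only delicate point is comparing $\|\cdot\|_{X_\omega}$ with $\|\cdot\|_X$. Recalling $\|v\|_{X_\omega}^2=\|\nabla v\|_{2}^2+\omega\|v\|_{2}^2+\int_{\mathbb{R}^3}V|v|^2\,dx$ with $V=V_1+V_2$ and $V_1\geq0$, I would sharpen Lemma \ref{ES} by interpolation: writing $V_2=g_1+g_2$ with $g_1\in L^q$ (so that $q>3/2$ forces $2q'<6$) and $g_2\in L^\infty$, the bound $\|v\|_{2q'}\leq\|v\|_{6}^{\theta}\|v\|_{2}^{1-\theta}$ combined with $\|v\|_{6}\leq C\|\nabla v\|_{2}$ and Young's inequality yields, for every $\varepsilon>0$,
$$\Big|\int_{\mathbb{R}^3}V_2|v|^2\,dx\Big|\leq\varepsilon\|\nabla v\|_{2}^2+C_\varepsilon\|v\|_{2}^2,$$
that is, $V_2$ is form-bounded relative to $-\Delta$ with relative bound zero. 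Choosing $\varepsilon=\tfrac12$ and discarding the nonnegative $V_1$-term, this gives $\|v\|_{X_\omega}^2\geq\tfrac12\|\nabla v\|_{2}^2+(\omega-C_{1/2})\|v\|_{2}^2+\int_{\mathbb{R}^3}V_1|v|^2\,dx\geq\tfrac12\|v\|_X^2$ as soon as $\omega>C_{1/2}+1$. Enlarging $\omega_0^*$ so that it exceeds $\omega_1$, $\omega_2$ and $C_{1/2}+1$, the conclusion follows with $\delta'=\tfrac12\min\{\delta_1,\delta_2\}$ for all $\omega\in(\omega_0^*,\infty)$.

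The main obstacle is precisely this relative-bound-zero estimate: the crude inequality of Lemma \ref{ES} only produces a fixed multiple of $\|V_2\|_{L^q+L^\infty}\|v\|_{H^1}^2$, whose coefficient on $\|\nabla v\|_{2}^2$ may exceed $1$ and therefore cannot be absorbed; the interpolation refinement is what lets the large factor $\omega$ dominate the perturbation and makes the two norms equivalent. Everything else is a bookkeeping assembly of the two halves of Lemma \ref{VL}.
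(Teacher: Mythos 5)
Your proof is correct and follows essentially the same route as the paper: split $v=v_1+iv_2$, use the decomposition $\langle S_\omega''(\varphi_\omega)v,v\rangle=\langle L_\omega^1 v_1,v_1\rangle+\langle L_\omega^2 v_2,v_2\rangle$ from \eqref{EQs} together with the two halves of Lemma \ref{VL}, and then pass from $\|\cdot\|_{X_\omega}$ to $\|\cdot\|_X$. The only difference is that the paper simply asserts the equivalence of $\|\cdot\|_{X_\omega}$ and $\|\cdot\|_X$ on $X_G$, whereas you actually justify it via the relative-bound-zero estimate for $V_2$ — a welcome addition rather than a deviation.
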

\begin{proof}
From \eqref{EQs} and according to Lemma \ref{VL}. On the one hand, there exists $\omega_1>0$ and $\delta_1>0$ such that
$$
\langle L_{\omega}^1v_1,v_1\rangle\geq\delta_1\|v_1\|_{X_\omega}^2,\ \ \ \ \forall v_1\in X_G(\mathbb{R}^3,\mathbb{R})
$$
for any $\omega\in(\omega_1,\infty)$ and $(v_1,\varphi_\omega)_{L^2}=Re(\varphi_\omega,v)_{L^2}=0$.
On the other hand, there exists $\omega_2>0$ and $\delta_2>0$ such that
$$
\langle L_{\omega}^2v_2,v_2\rangle\geq\delta_2\|v_2\|_{X_\omega}^2,\ \ \ \ \forall v_2\in X_G(\mathbb{R}^3,\mathbb{R})
$$
for any $\omega\in(\omega_2,\infty)$ and $(v_2,\varphi_\omega)_{L^2}=Re(i\varphi_\omega,v)_{L^2}=0$.
Since there exists $\omega_0>0$, $\mathcal{M}_\omega$ is not empty. Let $\varphi_\omega(x)\in\mathcal{M}_\omega$, there exists $\omega_{0}^{*}=\max\{\omega_1,\omega_2\}$ and $\delta^{'}=\min\{\delta_1,\delta_2\}$
such that
$$
\langle S_{\omega}^{''}(\varphi_\omega)v,v\rangle=\langle L_{\omega}^{1}v_1,v_1\rangle+\langle L_{\omega}^{2}v_2,v_2\rangle\geq\delta_1\|v_1\|_{X_\omega}^2+\delta_2\|v_2\|_{X_\omega}^2\geq\delta^{'}\|v\|_{X_\omega}^2,
$$
for $\omega\in(\omega_{0}^{*},\infty)$ and any $v\in X_G$ satisfying $Re(\varphi_\omega,v)_{L^2}=0$ and $Re(i\varphi_\omega,v)_{L^2}=0$. Then the conclusion follows from the fact that $\|\cdot\|_X$ is equivalent to $\|\cdot\|_{X_\omega}$ on $X_G$.
\end{proof}

\subsection{Proof of Theorem \ref{MR}}
\ \ \ \ In this subsection, we will show the main result of stability.
For any $\varepsilon>0$ and $\varphi_\omega\in X_G$, we define
$$
U_\varepsilon(\varphi_\omega)\triangleq\{v\in X_G;\inf_{\theta\in\mathbb{R}}\|v-e^{i\theta}\varphi_\omega\|_{X_G}<\varepsilon\}.
$$
\begin{lem}\label{SCD}
Assume the conditions $(V0)-(V2)$ hold and $\eta>0$ be the constant in Lemma \ref{RGS}, there exists $0<\eta'<\eta$, for $2<p<2+\eta'$, $\varphi_\omega\in\mathcal{M}_\omega$,  $\omega\in(\omega_{0}^{*},\infty)$ where $\omega_{0}^{*}$  be the number obtained in Lemma \eqref{SCD1}. Then, there exists $C>0$ and $\varepsilon>0$ such that
$$
E(u)-E(\varphi_\omega)\geq C\inf_{\theta\in\mathbb{R}}\|u-e^{i\theta}\varphi_\omega\|_{X}^2,
$$
for $u\in U_\varepsilon(\varphi_\omega)$ with $Q(u)=Q(\varphi_\omega)$.
\end{lem}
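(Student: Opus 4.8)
The plan is to convert the claimed coercivity of the energy difference into the coercivity of the second variation $S_\omega''(\varphi_\omega)$ furnished by Proposition \ref{SCD1}, after a suitable gauge‑fixing and projection. First I would use the mass constraint $Q(u)=Q(\varphi_\omega)$: since $S_\omega=E+\omega Q$, this gives $E(u)-E(\varphi_\omega)=S_\omega(u)-S_\omega(\varphi_\omega)$, so it suffices to bound the right‑hand side from below. Because $S_\omega$ and the $X$‑norm are both gauge invariant (the norm involves only $|\nabla\cdot|^2$, $|\cdot|^2$ and $V_1|\cdot|^2$), I may replace $u$ by $w=e^{-i\theta(u)}u$ for any phase, with $S_\omega(u)=S_\omega(w)$ and $\inf_\theta\|u-e^{i\theta}\varphi_\omega\|_X\le\|w-\varphi_\omega\|_X$. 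I would fix the phase by the modulation condition $Im(\varphi_\omega,w)_{L^2}=0$, which is solvable for $u$ near the orbit since $(\varphi_\omega,u)_{L^2}\neq0$ there; writing $v=w-\varphi_\omega$ and using that $\varphi_\omega$ is real, this is exactly $Re(i\varphi_\omega,v)_{L^2}=0$.

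Next I would produce the second orthogonality relation. Expanding $\|w\|_2^2=\|\varphi_\omega\|_2^2$ gives $2Re(\varphi_\omega,v)_{L^2}+\|v\|_2^2=0$, hence $Re(\varphi_\omega,v)_{L^2}=-\tfrac12\|v\|_2^2=O(\|v\|_X^2)$: this projection is already second order but not exactly zero. I would therefore set $a=Re(\varphi_\omega,v)_{L^2}/\|\varphi_\omega\|_2^2$ and $\tilde v=v-a\varphi_\omega$. Since $a$ is real and $Re(i\varphi_\omega,\varphi_\omega)_{L^2}=0$, the function $\tilde v\in X_G$ satisfies both $Re(\varphi_\omega,\tilde v)_{L^2}=0$ and $Re(i\varphi_\omega,\tilde v)_{L^2}=0$, while $|a|=O(\|v\|_X^2)$ and $\|\tilde v\|_X^2=\|v\|_X^2+O(\|v\|_X^3)$.

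Then I would Taylor expand. As $\varphi_\omega$ is a ground state, $S_\omega'(\varphi_\omega)=0$, and $S_\omega$ is $C^2$ near $\varphi_\omega$ (the nonlocal term being $C^2$ on $X$ by the Hardy--Littlewood--Sobolev inequality of Proposition \ref{HSI1} together with the subcritical Sobolev embeddings, $p$ being close to $2$), so
\[
S_\omega(w)-S_\omega(\varphi_\omega)=\tfrac12\langle S_\omega''(\varphi_\omega)v,v\rangle+o(\|v\|_X^2).
\]
Using bilinearity on $v=\tilde v+a\varphi_\omega$, the cross term is $O(|a|\,\|v\|_X)=O(\|v\|_X^3)$ and the diagonal term is $O(a^2)=O(\|v\|_X^4)$, so $\langle S_\omega''(\varphi_\omega)v,v\rangle=\langle S_\omega''(\varphi_\omega)\tilde v,\tilde v\rangle+O(\|v\|_X^3)$. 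Now Proposition \ref{SCD1} applies to $\tilde v$ and gives $\langle S_\omega''(\varphi_\omega)\tilde v,\tilde v\rangle\ge\delta'\|\tilde v\|_X^2=\delta'\|v\|_X^2+O(\|v\|_X^3)$. Combining,
\[
E(u)-E(\varphi_\omega)\ge\tfrac{\delta'}{2}\|v\|_X^2-O(\|v\|_X^3)-o(\|v\|_X^2),
\]
and choosing $\varepsilon$ small enough that $\|v\|_X\le\varepsilon$ absorbs the higher order terms leaves $E(u)-E(\varphi_\omega)\ge\tfrac{\delta'}{4}\|v\|_X^2\ge\tfrac{\delta'}{4}\inf_\theta\|u-e^{i\theta}\varphi_\omega\|_X^2$, i.e. the claim with $C=\delta'/4$.

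The main obstacle is the modulation step: the distance to the orbit is naturally measured in the $X$‑metric, whose minimizing phase would produce $X$‑orthogonality, whereas Proposition \ref{SCD1} demands $L^2$‑orthogonality to $\varphi_\omega$ and $i\varphi_\omega$. Fixing the phase directly by the $L^2$ modulation condition $Im(\varphi_\omega,w)_{L^2}=0$ (rather than by distance minimization) resolves this, at the mild cost of only an upper bound $\mathrm{dist}\le\|v\|_X$, which is precisely the direction needed for a lower energy bound. The second delicate point, that the Taylor remainder is genuinely $o(\|v\|_X^2)$ and that the removed components enter only at third order, reduces to the $C^2$‑regularity and the local boundedness of the bilinear form $S_\omega''(\varphi_\omega)$ on $X$, both coming from Proposition \ref{HSI1} and Lemma \ref{ES}; I expect this to be routine once the subcritical range of $p$ is invoked.
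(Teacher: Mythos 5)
Your proof is correct, and the overall strategy is the one the paper uses: reduce $E(u)-E(\varphi_\omega)=S_\omega(u)-S_\omega(\varphi_\omega)$ via the mass constraint, Taylor expand around $\varphi_\omega$, strip off the two degenerate directions $\varphi_\omega$ and $i\varphi_\omega$, and invoke the coercivity of $S_\omega''(\varphi_\omega)$ from Proposition \ref{SCD1}. Where you differ is in the gauge-fixing and the decomposition. The paper chooses $\theta(u)$ as the $X$-distance minimizer, writes $v=a\varphi_\omega+bi\varphi_\omega+y$ with $y$ doubly $L^2$-orthogonal, controls $a$ by the mass constraint, and then must control $b$ separately: it uses the first-order condition $(v,i\varphi_\omega)_X=0$ of the minimization together with $S_\omega''(\varphi_\omega)i\varphi_\omega=0$ to get $|b|\,\|\varphi_\omega\|_X\le\|y\|_X$ and hence $\|y\|_X^2\ge\tfrac14\|v\|_X^2+O(\|v\|_X^3)$. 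You instead fix the phase by the $L^2$ modulation condition $\mathrm{Re}(i\varphi_\omega,v)_{L^2}=0$, which makes $b=0$ exactly, so only the $a\varphi_\omega$ component needs removing and you get $\|\tilde v\|_X^2=\|v\|_X^2+O(\|v\|_X^3)$ with no loss of a factor and no appeal to $S_\omega''(\varphi_\omega)i\varphi_\omega=0$; the price is that your phase is not distance-minimizing, but since you only need $\inf_\theta\|u-e^{i\theta}\varphi_\omega\|_X\le\|v\|_X$ in the final step, that is the harmless direction. The one point you should spell out slightly more is that the modulation phase produced by the implicit function theorem stays within $O(\varepsilon)$ of a near-minimizing phase, so that $\|v\|_X$ is indeed small enough for the Taylor remainder $o(\|v\|_X^2)$ and the cubic corrections to be absorbed; this is routine but is the place where the smallness of $\varepsilon$ actually enters.
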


\begin{proof}
By the implicit function theorem, if $\varepsilon>0$ is small enough, $u\in U_\varepsilon(\varphi_\omega)$ with $Q(u)=Q(\varphi_\omega)$, there exists $\theta(u)\in\mathbb{R}$ such that
\begin{equation}\label{XN}
\|u-e^{i\theta(u)}\|_{X}^2=\min_{\theta\in\mathbb{R}}\|u-e^{i\theta}\|_{X}^2.
\end{equation}
Let $v=e^{-i\theta(u)}u-\varphi_\omega$, we decompose
$$
v=a\varphi_\omega+bi\varphi_\omega+y,
$$
where $a,b\in\mathbb{R}$, and $y\in X_G$ satisfying $Re(y,\varphi_\omega)_{L^2}=0$ and $Re(y,i\varphi_\omega)_{L^2}=0$.  Obviously,
$$
\langle Q^{'}(\varphi_\omega),v\rangle=Re(\varphi_\omega,v)_{L^2}=
Re(\varphi_\omega,a\varphi_\omega+bi\varphi_\omega+y)_{L^2}=a\|\varphi_\omega\|_{2}^2.
$$
and the Taylor expansion gives
$$
Q(\varphi_\omega)=Q(u)=Q(e^{-i\theta(u)}u)=Q(\varphi_\omega+v)=Q(\varphi_\omega)+\langle Q^{'}(\varphi_\omega),v\rangle+O(\|v\|_{X}^2).
$$
Thus, we have $$a=O(\|v\|_{X}^2).$$
Moreover, we have
$$
S_\omega(u)=S_\omega(e^{-i\theta(u)}u)=S_\omega(\varphi_\omega+v)=S_\omega(\varphi_\omega)+\langle S^{'}(\varphi_\omega),v\rangle+\frac{1}{2}\langle S_{\omega}^{''}(\varphi_\omega)v,v\rangle+o(\|v\|_{X}^2),
$$
i.e.
$$
S_\omega(u)-S_\omega(\varphi_\omega)=\langle S^{'}(\varphi_\omega),v\rangle+\frac{1}{2}\langle S_{\omega}^{''}v,v\rangle+o(\|v\|_{X}^2),
$$
by $S_{\omega}^{'}(\varphi_\omega)=0$ and $Q(\varphi_\omega)=Q(u)$, we obtain
\begin{equation}\label{p1}
\aligned
E(u)-E(\varphi_\omega)&=S_\omega(u)-\omega Q(u)-(S_\omega(\varphi_\omega)-\omega Q(\varphi_\omega))\\
&=\frac{1}{2}\langle S_{\omega}^{''}(\varphi_\omega)v,v\rangle+o(\|v\|_{X}^2).
\endaligned
\end{equation}
Next, since $S_{\omega}^{'}(e^{i\theta}\varphi_\omega)=0$ for $\theta\in\mathbb{R}$, we have $S_{\omega}^{''}(\varphi_\omega)i\varphi_\omega=0$. Therefore,
\begin{equation}\label{p2}
\aligned
\langle S_{\omega}^{''}(\varphi_\omega)y,y\rangle&=\langle S_{\omega}^{''}(\varphi_\omega)v,v\rangle-2a\langle S_{\omega}^{''}(\varphi_\omega)\varphi_\omega,v\rangle+a^2\langle S_{\omega}^{''}(\varphi_\omega)\varphi_\omega,\varphi_\omega\rangle\\
&=\langle S_{\omega}^{''}(\varphi_\omega)v,v\rangle+O(\|v\|_{X}^3).
\endaligned
\end{equation}
Since $y\in X_G$ satisfies $Re(y,\varphi_\omega)_{L^2}=0$ and $Re(y,i\varphi_\omega)_{L^2}=0$, by Proposition \ref{SCD1}, there exists $\delta^{'}>0$ such that
\begin{equation}\label{p3}
\langle S_{\omega}^{''}(\varphi_\omega)y,y\rangle\geq\delta^{'}\|y\|_{X}^2.
\end{equation}
From \eqref{p2} and \eqref{p3}, we know
\begin{equation}\label{po}
\langle S_{\omega}^{''}(\varphi_\omega)v,v\rangle\geq \delta^{'}\|y\|_{X}^2-O(\|v\|_{X}^3).
\end{equation}
Now, by \eqref{XN} and $(\varphi_\omega,i\varphi_\omega)_{X}=0$, we have $$0=(v,i\varphi_\omega)_{X}=b\|\varphi_\omega\|_{X}^2+(y,i\varphi_\omega)_{X}.$$ Thus, we have $|b|\|\varphi_\omega\|_X\leq\|y\|_X$ and $\|v\|_X\leq(|a|+|b|)\|\varphi_\omega\|_X+\|y\|_X\leq2\|y\|_X+O(\|v\|_{X}^2)$. Therefore, we have
\begin{equation}\label{p4}
\|y\|_{X}^2\geq\frac{1}{4}\|v\|_{X}^2+O(\|v\|_{X}^3).
\end{equation}
By \eqref{p1}, \eqref{po} and \eqref{p4}, we have
$$
E(u)-E(\varphi_\omega)\geq\frac{\delta^{'}}{2}\|y\|_{X}^2+o(\|v\|_{X}^2)\geq\frac{\delta^{'}}{8}\|v\|_{X}^2+o(\|v\|_{X}^2).
$$
Thus, for $u\in U_\varepsilon(\varphi_\omega)$ and $\|v\|_X=\|u-e^{i\theta(u)}\varphi_\omega\|_X<\varepsilon$, we may take $\varepsilon=\varepsilon(\delta^{'})>0$ small enough to obtain
$$
E(u)-E(\varphi_\omega)\geq\frac{\delta^{'}}{8}\|u-e^{i\theta(u)}\varphi_\omega\|_{X}^2.
$$
\end{proof}

\textbf{\large Proof of Theorem \ref{MR}.}
Argue by contradiction. If the standing wave is unstable, then there exists a sequence of initial data $u_n(0)$ and $\delta>0$ such that
$$
\inf_{\theta\in\mathbb{R}}\|u_n(0)-e^{i\theta}\varphi_\omega\|_{X}\rightarrow0,
$$
but
$$
\sup_{t>0}\inf_{\theta\in\mathbb{R}}\|u_n(t)-e^{i\theta}\varphi_\omega\|_{X}\geq\delta,
$$
where $u_n(t)$ is a solution with initial value $u_n(0)$. By continuity in $t$, we can pick the first time $t_n$ so that
\begin{equation}\label{RESULT}
\inf_{\theta\in\mathbb{R}}\|u_n(t_n)-e^{i\theta}\varphi_\omega\|_{X}=\delta.
\end{equation}
 By Proposition \ref{CAUP}, $E$ and $Q$ are conserved in $t$. Then, we have
$$
E(u_n(t_n))=E(u_n(0))\rightarrow E(\varphi_\omega),
$$
$$
Q(u_n(t_n))=Q(u_n(0))\rightarrow Q(\varphi_\omega).
$$
There exists a sequence $\{v_n\}$ such that $\|v_n-u_n(t_n))\|_X\rightarrow0$ and $Q(v_n)=Q(\varphi_\omega)$. Because of the continuity of $E$, we have $E(v_n)\rightarrow E(\varphi_\omega)$. If we choose $\delta$ small enough, from Lemma \ref{SCD}, we can obtain
$$
c\|v_n-e^{-i\theta(v_n)}\varphi_\omega\|_{X}^2=c\|e^{i\theta(v_n)}v_n-\varphi_\omega\|_{X}^2 \leq E(v_n)-E(\varphi_\omega)\rightarrow0.
$$
Thus, $\|u_n(t_n)-e^{-i\theta(v_n)}\varphi_\omega\|_X\rightarrow0$, which contradicts \eqref{RESULT}.

\end{document}